\documentclass{amsart} 
\usepackage{amssymb, amsmath} 
\usepackage{enumerate}
\usepackage{xypic}
\input xy
\xyoption{all}

\newtheorem{crl}{Corollary}[section]

\newtheorem{lmm}{Lemma}[section]

\newtheorem{prp}{Proposition}[section]

\newtheorem{thm}{Theorem}[section]

\theoremstyle{definition}
\newtheorem{dfn}{Definition}[section]
 
\theoremstyle{remark}
\newtheorem{rem}{Remark}[section] 

\newcommand{\R}{\mathbb{R}}
\newcommand{\Z}{\mathbb{Z}}
\newcommand{\Q}{\mathbb{Q}}
\newcommand{\C}{\mathbb{C}}

\renewcommand{\P}{\mathbb{P}}

\newcommand{\mmP}{\mathcal{P}}

\newcommand{\mmU}{\mathcal{U}}

\newcommand{\mmK}{\mathcal{K}}

\newcommand{\mmW}{\mathcal{W}}
\newcommand{\mmD}{\mathcal{D}}

\newcommand{\wC}{\widetilde{C}}

\newcommand{\wZ}{\widetilde{\Z}}

\newcommand{\whP}{\widehat{\mmP}}

\newcommand{\hch}{\widehat{\ch}}
\newcommand{\och}{\overline{\ch}}

\newcommand{\whK}{\widehat{K}}
\newcommand{\wmD}{\widetilde{\mmD}}

\newcommand{\bj}{\boldsymbol{j}}

\newenvironment{enumerate*}[1][{}]{\begin{itemize}}{\end{itemize}}
\DeclareMathOperator*{\ch}{ch}

\DeclareMathOperator*{\Cub}{Cub}

 \DeclareMathOperator*{\Hom}{Hom} \DeclareMathOperator*{\im}{im}

\DeclareMathOperator*{\Sp}{Sp}
 
\DeclareMathOperator*{\tr}{tr} 
 
\DeclareMathOperator*{\HoFib}{HoFib}
 \newcommand{\Spn}{\Sp\nolimits}\DeclareMathOperator*{\Id}{Id}
\newcommand{\vist}{\begin{flushright}
$\square$
\end{flushright}}

\title{Adams operations on higher arithmetic K-theory}
\author{Elisenda \textsc{Feliu}}
\email{efeliu@ub.edu}
\address{Universitat de Barcelona \\ Gran Via de les Corts Catalanes, 585 \\ 08007 Barcelona, Spain}
\subjclass[2000]{14G40 (primary), 19E08 (secondary)}
\thanks{Supported partially by the DGICYT BFM2003-02914}

\begin{document}

\maketitle

\begin{abstract} We construct Adams operations on the rational higher arithmetic K-groups of a proper arithmetic variety. The definition applies to the higher arithmetic K-groups given by Takeda as well as to the groups suggested by Deligne and Soul\'e, by means of the homotopy groups of the homotopy fiber of the regulator map. They are compatible with the Adams operations on algebraic $K$-theory. The definition relies on the chain morphism representing Adams operations in higher algebraic $K$-theory given previously by the author. In this paper it is shown that a slight modification of this chain morphism commutes strictly with the representative of the Beilinson regulator given by Burgos and Wang.
\end{abstract}

\section*{Introduction}

This paper contributes to the development of a \emph{higher arithmetic intersection theory} following the steps of the higher algebraic intersection theory but suitable for arithmetic varieties. In \cite{BFChow}, the author, together with Burgos, defined the higher arithmetic Chow ring for any arithmetic variety over a field, extending the construction given by Goncharov in \cite{Goncharov} which was valid only for proper arithmetic varieties. The question that arises is whether these groups are related to the higher arithmetic $K$-groups as given by Takeda or as suggested by Deligne and Soul\'e (see below). To this end, and inspired by the algebraic analogue, in this paper we endow the higher arithmetic $K$-groups of an arithmetic variety (tensored by $\Q$) with a (pre)-$\lambda$-ring structure.

Let $X$ be an arithmetic variety over the ring of integers $\Z$.
In order to define the arithmetic Chern character on hermitian
vector bundles, Gillet and Soul{\'e} have introduced in
\cite{GilletSouleClassesII} the arithmetic $K_0$-group, denoted by
$\widehat{K}_0(X)$. They endowed $\widehat{K}_0(X)$ with a
pre-$\lambda$-ring structure, which was shown to be a
$\lambda$-ring structure by R{\"o}ssler in \cite{Roessler1}. This
group fits in an exact sequence
\begin{equation*}\label{introkseq2}
K_1(X) \xrightarrow{\rho} \bigoplus_{p\geq 0} \mmD^{2p-1}(X,p)/\im d_{\mmD}
\rightarrow \whK_0(X) \rightarrow K_0(X) \rightarrow 0, \tag{*}
\end{equation*}
 with $\rho$ the Beilinson regulator (up to a constant factor) and $\mmD^{*}(X,p)$ the Deligne complex of differential forms with $p$-twist computing
 Deligne-Beilinson cohomology with $\R$ coefficients and twisted by $p$, $H_{\mmD}^{*}(X,\R(p))$.

Two different definitions for higher arithmetic K-theory
have been proposed. Initially, it was suggested
by Deligne and Soul{\'e} (see \cite[$\S$III.2.3.4]{Soule2}  and
\cite[Remark 5.4]{Delignedeterminant}) that these groups should
fit in a long exact sequence
$$\dots \rightarrow K_{n+1}(X) \xrightarrow{\rho} H^{2p-n-1}_{\mmD}(X,\R(p)) \rightarrow \widehat{K}_n(X) \rightarrow
K_n(X) \rightarrow \dots, $$ extending the exact sequence
\eqref{introkseq2}, with  $\rho$  the Beilinson
regulator. This can be achieved by defining $\widehat{K}_n(X)$ to
be the homotopy groups of the homotopy fiber of a representative
of the Beilinson regulator (for instance, the representative
``$\ch$'' defined by Burgos and Wang in \cite{Burgos1}).

If $X$ is proper,  Takeda has given in \cite{Takeda} an
alternative definition of the \emph{higher arithmetic $K$-groups}
of $X$, by means of \emph{homotopy groups modified} by the representative
of the Beilinson regulator ``$\ch$''.  We denote these higher
arithmetic $K$-groups by $\widehat{K}_n^{T}(X)$. In this case,
these groups fit in exact sequences
$$
K_{n+1}(X) \xrightarrow{\rho} \bigoplus_{p\geq 0} \mmD^{2p-n-1}(X,p)/\im d_{\mmD}
\rightarrow \whK_n^T(X) \rightarrow K_n(X) \rightarrow 0,
$$
analogous to \eqref{introkseq2}. The two definitions do not agree, but, as proved by Takeda, they
are related by a natural isomorphism:
$$ \widehat{K}_n(X) \cong \ker (\ch: \widehat{K}_n^{T}(X) \rightarrow \mmD^{2p-n}(X,p)),\qquad n\geq 0.$$

In this paper we give a pre-$\lambda$-ring structure on the higher
arithmetic $K$-groups $\widehat{K}_n(X)_{\Q}$ and
$\widehat{K}_n^T(X)_{\Q}$. It is compatible with the
$\lambda$-ring structure on the algebraic $K$-groups, $K_n(X)$,
defined by Gillet and Soul{\'e} in \cite{GilletSouleFiltrations}.
Moreover, for
$n=0$ we recover the $\lambda$-ring structure of
$\widehat{K}_0(X)$.

 More concretely, we construct \emph{Adams operations}
 \begin{eqnarray*}
\Psi^k: \widehat{K}_n(X)\otimes \Q  & \rightarrow & \widehat{K}_n(X)\otimes \Q,\qquad k\geq 0, \\
\Psi^k: \widehat{K}_n^T(X)\otimes \Q  & \rightarrow & \widehat{K}_n^T(X)\otimes \Q,\qquad k\geq 0,
\end{eqnarray*}
which, since we have tensored by $\Q$, induce $\lambda$-operations on
$\widehat{K}_n(X)\otimes \Q$ and  $\widehat{K}_n^T(X)\otimes \Q$.

To this end, it is apparently necessary to have a
representative of the Adams operations in algebraic $K$-theory, in terms of a
chain morphism, which commutes, at least up to a given homotopy, with the
representative of the Beilinson regulator ``$\ch$''. In \cite{FeliuAdams}, the author constructed a chain morphism representing the
Adams operations in algebraic $K$-theory tensored by $\Q$. In this paper, we
show that a slight modification of the construction in \cite{FeliuAdams} commutes strictly with ``$\ch$'', and
we deduce a pre-$\lambda$-ring structure for both $\widehat{K}_n(X)\otimes \Q$ and $\widehat{K}_n^T(X)\otimes \Q$.
The modification needs to be introduced in order to deal with the fact that the Koszul complex, when induced with its natural hermitian metrics,
does not have zero Bott-Chern form. A discussion on the Bott-Chern form of the Koszul complex is found in section $\S$\ref{bottchernkoszul}.

In order to work with $\widehat{K}_n^T(X)\otimes \Q$, we introduce the
\emph{modified homology groups}, which are the homological analogue of the
modified homotopy groups given by Takeda and the dual notion of the
\emph{truncated relative cohomology groups} defined by Burgos in
\cite{Burgos2}. We show that the homology groups modified by ``$\ch$'' give a
homological description of $\widehat{K}_n^T(X)\otimes \Q$.

\vspace{0.3cm} The paper is organized as follows.
In the first section we review the construction of the Beilinson regulator ``$\ch$'' given by Burgos and Wang in \cite{Burgos1}.
Next, we recall the definition of the arithmetic $K$-group of an arithmetic variety, $\widehat{K}_0(X)$, and proceed to the description of the higher arithmetic $K$-groups, both the Deligne-Soul\'e version and the Takeda one.
In the third section we introduce the modified homology groups and show that the Takeda higher arithmetic $K$-groups
admit a homological description after being tensored by $\Q$. Finally, the last section is devoted to the construction of Adams operations in higher arithmetic $K$-theory.

\vspace{0.5cm} \textbf{Notation.} If $A$ is an abelian group, we
denote $A_{\Q} := A\otimes {\Q}. $

We follow the conventions and definitions on (co)chain complexes and iterated (co)chain complexes
as given in \cite[$\S$2]{BurgosKuhnKramer}. All (co)chain complexes are of abelian groups.

If $(A_*,d_A)$ is a chain complex, we denote by $ZA_n$ the group of \emph{cycles} of degree $n$, that is, it consists of the elements $z\in A_n$ such that $d_A(z)=0$.
If $f:A_*\rightarrow B_*$ is a chain morphism, we denote by $s(f)_*$ the \emph{simple complex} associated to $f$. This is the same as the cone twisted by $-1$.

Given a chain complex $B_*$, let $\sigma_{> n}B_*$ be its \emph{b{\^e}te truncation} that is, the complex with
$$\sigma_{> n}B_r= \left\{ \begin{array}{ll} B_r & r > n, \\ 0 & r\leq n,
\end{array}\right.  $$
and differential induced by the differential of $B_*$.

\section{Higher Bott-Chern forms}
Burgos-Wang construction of the Beilinson regulator, given in \cite{Burgos1}, plays a key role in the definition of the higher arithmetic $K$-groups.
Using the chain complex of cubes, the transgression of vector bundles, and the Chern
character form of a vector bundle, they obtained a chain morphism
whose induced morphism in homology is the Beilinson regulator.
The construction is based on the definition of higher Bott-Chern forms.  These forms are the extension
to hermitian $n$-cubes of the Chern character form of a hermitian
vector bundle.

In this section we review this construction. For further details see the given reference or alternatively see
\cite[$\S$3.2]{BurgosKuhnKramer2}.

We focus the discussion on the case of smooth proper complex
varieties, since this will be the case in our applications.
Nevertheless, most of the constructions can be adapted to the
non-proper case by using hermitian metrics smooth at infinity. See
the original reference for details.

\subsection{Higher algebraic $K$-theory}\label{higherK}
Let $\mmP$ be a small exact category and let $K_n(\mmP)$ denote the $n$-th algebraic $K$-group of $\mmP$ in the sense of Quillen in \cite{Quillen}. Let $S_{\cdot}(\mmP)$ be the Waldhausen simplicial set, defined in \cite{Waldhausen}, which computes the higher algebraic $K$-groups of $\mmP$, that is, we have
 $$K_n(\mmP) \cong \pi_{i+1}(|S_{\cdot}(\mmP)|,\{0\}). $$
   Denote by $\partial_i,s_i$ the face and degeneracy maps, respectively, of $S_{\cdot}(\mmP)$ and let $\Z S_*(\mmP)$ be the Moore complex associated to the simplicial set $S_{\cdot}(\mmP)$.

Let $\langle 0,1,2 \rangle$ be the category associated to the ordered set
$\{0,1,2\}$ and let $\langle 0,1,2 \rangle^n$ be the $n$-th cartesian power.
Given a functor
$$\langle 0,1,2 \rangle^n \xrightarrow{E} \mmP,$$
the image of an $n$-tuple $\bj=(j_1,\dots,j_n)$ is denoted by $E^{\bj}$. For such a functor
 one defines its faces by
$$(\partial_i^kE)^{\bj}=E^{j_1...j_{i-1},k,j_{i},\dots,j_{n-1}},$$
for all $i\in \{1,\dots,n\},\ k\in \{0,1,2\},\ \bj\in \{0,1,2\}^{n-1}.$

\begin{dfn}
An  \emph{$n$-cube} $E$ over $\mmP$ is a functor
$$\langle 0,1,2 \rangle^n \xrightarrow{E} \mmP$$
such that for all
$\bj\in \{0,1,2\}^{n-1}$ and $i=1,\dots,n$ the sequence
\begin{equation}\label{shortexact}
(\partial_i^0E)^{\bj} \rightarrow
(\partial_i^1E)^{\bj} \rightarrow (\partial_i^2E)^{\bj}\end{equation} is a short exact
sequence of $\mmP$.
\end{dfn}
A functor $E$ as above is usually called a \emph{cube} and the sequences \eqref{shortexact} being exact make the cube be called an \emph{exact cube}. Since we will only consider exact cubes, the word exact is dropped from the terminology.

For every $n\geq 0$, let $C_n(\mmP)$ denote the set of $n$-cubes over $\mmP$.
We have defined face maps
\begin{eqnarray*}
\partial_i^k: C_n(\mmP) & \rightarrow & C_{n-1}(\mmP), \qquad i=1,\dots,n,\ k=0,1,2.
\end{eqnarray*}
There are as well degeneracy maps defined (see for instance \cite[$\S$3]{Burgos1})
\begin{eqnarray*}
s_i^k: C_{n-1}(\mmP) & \rightarrow & C_{n}(\mmP), \qquad i=1,\dots,n,\ k=0,1.
\end{eqnarray*}

If we write $\Z C_n(\mmP)$ to be the free abelian group on the $n$-cubes, the alternate sum of the face maps, $d=\sum_{i=^1}^n(-1)^i(\partial_i^0-\partial_i^1+\partial_i^2)$,
endows $\Z C_*(\mmP)$ with a chain complex structure with differential $d$.

Let
$$\Z D_n(\mmP)=\sum_{i=1}^{n} s_i^0(\Z C_{n-1}(\mmP))+ s_i^1(\Z C_{n-1}(\mmP))\subset \Z C_n(\mmP),$$
be the subgroup generated by the degenerate cubes (i.e., those that lie in the image of some degeneracy map).
The differential of $\Z C_*(\mmP)$ induces a differential on $\Z D_*(\mmP)$ making the inclusion arrow
$\Z D_*(\mmP)  \hookrightarrow \Z C_*(\mmP)$ a chain morphism. The quotient complex
$$\wZ C_*(\mmP)=\Z C_*(\mmP)/\Z D_*(\mmP)$$ is called the \emph{chain complex of cubes} in $\mmP$.

\medskip
\textbf{The $\Cub$ morphism. }
As shown in Wang's thesis \cite{Wang} and in \cite{McCarthy}, to every element $E\in S_{n}(\mmP)$ one can associate an $(n-1)$-cube $\Cub(E)$ satisfying the following property.
For $i=1,\dots,n-1$, we have
\begin{eqnarray}
\partial_i^{0}\Cub E &=& s_{n-2}^0\cdots s_{i}^0 \Cub \partial_{i+1}\cdots
\partial_n E, \nonumber \\
\partial_i^{1}\Cub E &=& \Cub \partial_{i} E,  \label{facescubes}  \\
\partial_i^{2}\Cub E &=& s_{i-1}^1\cdots s_{1}^1 \Cub \partial_{0}\cdots
\partial_{i-1} E. \nonumber
\end{eqnarray}

It follows from these equalities that $\Cub$ gives a chain morphism
$$ \Z S_{*}(\mmP)[-1]\xrightarrow{\Cub} \wZ C_*(\mmP).$$
The composition of the Hurewicz morphism with the morphism
induced by $\Cub$ in homology gives a morphism
$$\Cub : K_{n}(\mmP)=\pi_{n+1}(S_{\cdot}(\mmP)) \xrightarrow{\mathrm{Hurewicz}}
H_n(\Z S_{*}(\mmP)[-1]) \xrightarrow{\Cub} H_n(\wZ C_*(\mmP)).$$
Moreover, McCarthy  showed in \cite{McCarthy} that this morphism is an isomorphism over the field of rational numbers, that is, for all $n\geq
0$, the morphism
\begin{equation}\label{mccarthyiso}
K_n(\mmP)_{\Q} \xrightarrow{\Cub} H_n(\wZ C_*(\mmP),\Q)
\end{equation}
is an isomorphism.

\medskip
\textbf{Normalized complexes. }
Consider now the \emph{normalized chain complex} $N C_*(\mmP)$ introduced in \cite[$\S$1.5]{FeliuAdams}, whose $n$-th graded piece is given by
$$N_n C(\mmP) := \bigcap_{i=1}^{n} \ker\partial_i^0\cap \bigcap_{i=1}^{n}
\ker \partial_i^1\subset \Z C_n(\mmP),$$ and its differential is the one induced
by the differential of $\Z C_*(\mmP)$.  In loc. cit. it is shown that the composition
$$N_*C(\mmP) \hookrightarrow \Z C_*(\mmP) \twoheadrightarrow \wZ C_*(\mmP)  $$
is an isomorphism of chain complexes.

Let $NS_*(\mmP)$ be the normalized complex associated to the simplicial abelian group $\Z
S_{\cdot}(\mmP)$ given by
$$ N S_{n}(\mmP)=\bigcap_{i=1}^{n} \ker \partial_i,\quad n\geq 0,$$
and whose differential is $\partial_0$.
It follows from the relations in \eqref{facescubes} that the morphism $\Cub$ induces a chain morphism
$$
N S_{*}(\mmP)[-1] \xrightarrow{\Cub}  N C_*(\mmP).$$

\subsection{Chern character form}
In this subsection, all schemes are over $\C$.
As defined in \cite{Burgos2}, for every $p\geq 0$, let $\mmD^*(X,p)$ denote the Deligne complex of differential forms on $X$ computing Deligne-Beilinson cohomology groups with real coefficients twisted by $p$, $H_{\mmD}^{*}(X,\R(p))$.
We will write $\mmD^{2p-*}(X,p)$ for the chain complex associated to the cochain complex $\mmD^*(X,p)[2p]$.

Let $X$ be a smooth proper complex variety.
A \emph{hermitian vector bundle}
$\overline{E}=(E,h)$ is an algebraic vector bundle $E$ over $X$
together with a smooth hermitian metric on $E$. The reader is
referred to \cite{Wells} for details.

For every hermitian vector bundle $\overline{E}$, by the Chern-Weil formulae one defines a
closed differential form
$$\ch(\overline{E})\in \bigoplus_{p\geq 0}\mmD^{2p}(X,p),$$
representing the Chern character class
$\ch(E)=[\ch(\overline{E})]\in H_{dR}^{*}(X)$.
Although the class of $\ch(\overline{E})$ is independent of the
hermitian metric, the form depends on the particular choice of hermitian
metric.

The following properties are satisfied:
\begin{enumerate*}[$\rhd$] \item If $\overline{E}\cong \overline{F}$ is an
isometry of hermitian vector bundles, then
$ \ch(\overline{E}) = \ch(\overline{F}).$
\item Let $\overline{E}_1$ and $\overline{E}_2$ be two hermitian
vector bundles. If $\overline{E}_1\oplus \overline{E}_2$ and
$\overline{E}_1\otimes \overline{E}_2$ have the hermitian metrics
induced by those on $\overline{E}_1$ and $\overline{E}_2$, then
$$\ch(\overline{E}_1\oplus \overline{E}_2)= \ch(\overline{E}_1)+ \ch(\overline{E}_2), \quad \textrm{and}\quad \ch(\overline{E}_1\otimes \overline{E}_2) = \ch(\overline{E}_1)\wedge\ch(\overline{E}_2).$$
\end{enumerate*}

\subsection{Hermitian cubes}
Let $X$ be a smooth proper complex variety. Let $\mmP(X)$ be the
category of vector bundles over $X$. Let $\widehat{\mmP}(X)$ be
the category whose objects are the hermitian vector bundles over
$X$, and whose morphisms are given by
$$\Hom\nolimits_{\widehat{\mmP}(X)}((E,h),(E',h'))=\Hom\nolimits_{\mmP(X)}(E,E').$$
The category $\widehat{\mmP}(X)$ inherits an exact category
structure from that of $\mmP(X)$.

We fix a universe $\mmU$ so that $\widehat{\mmP}(X)$ is
$\mathcal{U}$-small for every smooth proper complex variety $X$.
Every vector bundle admits a smooth hermitian metric. It follows
that the forgetful functor
$$ \widehat{\mmP}(X) \rightarrow \mmP(X)$$
is an equivalence of categories with its quasi-inverse constructed
by choosing a hermitian metric for each vector bundle. Therefore,
the algebraic $K$-groups of $X$ can be computed in terms of the
category $\widehat{\mmP}(X)$.

Denote by $\widehat{S}_{\cdot}(X)$ the Waldhausen simplicial set
corresponding to the exact category $\widehat{\mmP}(X)$ and let
$\Z \widehat{C}_*(X)=\Z C_*(\widehat{\mmP}(X))$, $\wZ \widehat{C}_*(X)=\wZ C_*(\widehat{\mmP}(X))$ and $N\widehat{C}_*(X)=N C_*(\widehat{\mmP}(X))$. The cubes in the
category $\widehat{\mmP}(X)$ are called \emph{hermitian cubes}.

\medskip
\textbf{Hermitian cubes with canonical kernels.}
Let $\overline{E}$ be a hermitian vector bundle and let $F\subset
\overline{E}$ be an inclusion of vector bundles. Then $F$ inherits
a hermitian metric from the hermitian metric of $\overline{E}$. It
follows that there is an induced hermitian metric on the kernel of
a morphism of hermitian vector bundles. This allows to extend the definition of cubes with canonical kernels given in \cite{FeliuAdams}
to hermitian cubes in the following sense.

\begin{dfn}
Let $\overline{E}$ be a hermitian $n$-cube and let $g_i^0:\partial_i^0\overline{E}\rightarrow
\partial_i^1\overline{E}$ denote the morphism in the cube. We say that
$\overline{E}$ has \emph{canonical kernels} if for every
$i=1,\dots,n$ and $\bj\in \{0,1,2\}^{n-1}$, there is an inclusion
$(\partial_i^0\overline{E})^{\bj}\subset
(\partial_i^1\overline{E})^{\bj}$ of sets, the morphism
$$g_i^0:\partial_i^0\overline{E}\rightarrow
\partial_i^1\overline{E}$$ is the canonical inclusion of cubes and the metric on $\partial_i^0\overline{E}$
is induced by the metric of $\partial_i^1\overline{E}$ by means of
$g_i^0$.
\end{dfn}
The differential of a hermitian cube with canonical kernels is again a hermitian cube with canonical kernels.
Let  $\Z K\widehat{C}_*(X)$ denote the complex of hermitian cubes
with canonical kernels. As usual, the quotient of the complex of cubes with
canonical kernels by the degenerate cubes with canonical kernels
is denoted by $\wZ K\widehat{C}_*(X)$.

\begin{rem} Burgos and Wang \cite[Definition 3.5]{Burgos1} introduced the notion of \emph{emi-cubes}, in order
to define the morphism ``$\ch$''. With the notation of last
definition, the emi-cubes are those for which the metric on
$\partial_i^0\overline{E}$ is induced by the metric of
$\partial_i^1\overline{E}$, without the need of $g_i^0$ to be the
set inclusion. In loc. cit. the purpose was that the Chern form
of the transgression bundle associated to a cube defined a chain
morphism, and by the properties of ``$\ch$'' stated in $\S$1.2, this more relaxed condition was sufficient. Our more restrictive notion arises because we require the
transgression map given in \cite{FeliuAdams} to define a morphism, before being composed with the
Chern form (see below).
\end{rem}

\begin{lmm} There is a chain morphism
$$ \lambda:  \wZ \widehat{C}_*(X) \rightarrow \wZ K\widehat{C}_*(X).$$
\end{lmm}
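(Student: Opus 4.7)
The plan is to construct $\lambda$ vertex-by-vertex as a functor-level operation on hermitian cubes and then extend linearly to $\Z\widehat{C}_*(X)$, checking that it passes to the quotient by degenerate cubes. Given an $n$-cube $\overline{E}$ and an index $\bj = (j_1,\dots,j_n)\in\{0,1,2\}^n$, let $\bj'$ denote the index obtained from $\bj$ by replacing every $0$ entry with $1$. Because $\bj\leq\bj'$ componentwise, composing the first-arrows $g_i^0$ of $\overline{E}$ along the zero coordinates of $\bj$ produces a canonical monomorphism $\overline{E}^{\bj}\hookrightarrow\overline{E}^{\bj'}$. Set
\[
(\lambda\overline{E})^{\bj} := \mathrm{image}\bigl(\overline{E}^{\bj}\hookrightarrow\overline{E}^{\bj'}\bigr),
\]
equipped with the hermitian metric induced from $\overline{E}^{\bj'}$. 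For the arrows of $\lambda\overline{E}$, declare the $g_i^0$'s to be the set-theoretic inclusions and define the $g_i^1$'s by restriction/corestriction of the original $g_i^1$'s of $\overline{E}$.

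The key observation is that when two indices $\bj$ and $\bj''$ differ only at position $i$ with $j_i=0$ and $j''_i=1$, one has $\bj' = (\bj'')'$, so $(\lambda\overline{E})^{\bj}$ and $(\lambda\overline{E})^{\bj''}$ sit as subbundles of the common ambient $\overline{E}^{\bj'}$; moreover the factorization $\overline{E}^{\bj}\to\overline{E}^{\bj''}\to\overline{E}^{\bj'}$ shows that the former is contained in the latter and inherits the same restricted metric, which is exactly the canonical-kernels property. For short exactness in direction $i$, note that each composition of first-arrows $g_k^0$ is injective, so $\overline{E}^{\bj}\hookrightarrow\overline{E}^{\bj'}$ is injective and $(\lambda\overline{E})^{\bj}$ is abstractly isomorphic, as a plain vector bundle, to $\overline{E}^{\bj}$; the direction-$i$ sequence of $\lambda\overline{E}$ then corresponds via these abstract isomorphisms to the original short exact sequence of $\overline{E}$, using that the surjection $g_i^1$ descends to a surjection of images. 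The chain-morphism property reduces to $\lambda\circ\partial_i^k = \partial_i^k\circ\lambda$ for every $i,k$, which holds on the nose because the priming rule $\bj\mapsto\bj'$ assigns the same target ambient bundle whether one first restricts to a face and then applies $\lambda$, or the reverse. Finally, since the construction is purely vertex-by-vertex and degeneracies amount to repeating one coordinate with identity/zero arrows at the duplicated direction, $\lambda$ sends degenerate cubes to degenerate cubes, and hence descends to $\wZ\widehat{C}_*(X)\to\wZ K\widehat{C}_*(X)$.

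The main obstacle is the combinatorial bookkeeping needed to check that the family $\{(\lambda\overline{E})^{\bj}\}_{\bj}$ together with the prescribed inclusions and induced maps really assembles into a functor $\langle 0,1,2\rangle^n\to\widehat{\mmP}(X)$, not merely a disjoint collection of short exact sequences direction by direction. Once this is in place, all remaining compatibilities, including the verification that the induced $g_i^1$'s in $\lambda\overline{E}$ coincide with the quotient maps demanded by exactness, follow from the functoriality of the image construction in the exact category of vector bundles together with the observation that priming commutes with face restriction.
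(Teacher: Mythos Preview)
Your construction of $\lambda$ is the natural one, and the paper's own proof simply cites the construction from \cite{FeliuAdams} together with \cite[Lemma~3.7]{Burgos1} for the hermitian compatibility, so in spirit you are doing what those references do. However, your verification of the chain-morphism property contains a genuine error.

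You claim that $\lambda\circ\partial_i^k=\partial_i^k\circ\lambda$ holds on the nose for all $i,k$, because ``the priming rule $\bj\mapsto\bj'$ assigns the same target ambient bundle whether one first restricts to a face and then applies $\lambda$, or the reverse.'' This is false for $k=0$. If you first restrict to the face $\partial_i^0$, the $i$-th coordinate is frozen at $0$; applying your $\lambda$ to the resulting $(n-1)$-cube only primes the remaining coordinates, so the ambient for the $\bj$-vertex is $\overline{E}^{j_1',\dots,j_{i-1}',\,0,\,j_i',\dots,j_{n-1}'}$. In the other order, applying $\lambda$ first primes all $n$ coordinates, turning the $0$ at position $i$ into $1$, so the ambient is $\overline{E}^{j_1',\dots,j_{i-1}',\,1,\,j_i',\dots,j_{n-1}'}$. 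These two ambients differ, and hence so do the resulting subbundles---both as underlying sets and, more importantly, in their induced hermitian metrics. Already for a $1$-cube $\overline{E}\colon \overline{E}^0\to\overline{E}^1\to\overline{E}^2$ one has $\lambda\partial_1^0\overline{E}=\overline{E}^0$ with its original metric, whereas $\partial_1^0\lambda\overline{E}$ is the image of $\overline{E}^0$ in $\overline{E}^1$ with the metric restricted from $\overline{E}^1$; these are different hermitian $0$-cubes, and there are no degenerate $0$-cubes to absorb the discrepancy. So $d\lambda\neq\lambda d$ with your construction as stated.

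The fix is not cosmetic: one needs either a different (more involved) definition of $\lambda$, or a more careful argument that the discrepancy between $\partial_i^0\lambda$ and $\lambda\partial_i^0$ is controlled in $\wZ K\widehat{C}_*(X)$. This is precisely what the cited references handle; you should consult \cite{FeliuAdams} for the actual construction rather than assert commutation with faces.
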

\begin{proof} The morphism  $\lambda$ is defined in \cite{FeliuAdams} for cubes over the category of vector bundles (not necessarily hermitian).
The fact that the image by $\lambda$ of a hermitian cube is a hermitian cube with canonical kernels follows from \cite[Lemma 3.7]{Burgos1}.
\end{proof}

\subsection{The transgression bundle and the Chern character}
Let $\P^1$ be the complex projective line.
Let $x$ and $y$ be the global sections of the canonical bundle $\mathcal{O}(1)$ given
by the projective coordinates $(x:y)$ on $\P^1$. Let $X$ be a
complex variety and let  $p_0$ and $p_1$ be the projections from $X\times
\P^1$ to $X$ and $\P^1$ respectively. Then, for every vector bundle $E$ over $X$, we denote
$$E(k):=p_0^* E\otimes p_1^*\mathcal{O}(k),\quad \forall k.$$

The following definition is a variation of the original one from \cite[$\S$3]{Burgos1}.
\begin{dfn}\label{definiciotransgressio} Let
$$E: 0\rightarrow E^0 \xrightarrow{f^0} E^1 \xrightarrow{f^1} E^2 \rightarrow 0$$
be a short exact sequence. The \emph{first transgression bundle by
projective lines} of $E$, $\tr_1(E)$, is the kernel of the
morphism
\begin{eqnarray*}
E^1(1) \oplus E^2(1) &\rightarrow & E^2(2) \\ (a,b) &\mapsto &
f^1(a)\otimes x-b\otimes y.
\end{eqnarray*}
\end{dfn}

Let $E$ be an $n$-cube of vector bundles over $X$. We define  the \emph{first transgression}
of $E$ as the $(n-1)$-cube on $X\times (\P^1)^1$ given by
$$ \tr\nolimits_1(E)^{\bj}:=
\tr\nolimits_1(\partial_2^{j_2}\dots \partial_n^{j_n}E),\qquad \textrm{for all } \bj=(j_2,\dots,j_n)\in
 \{0,1,2\}^{n-1},$$
i.e. we take the transgression of the exact sequences in the first
direction. Since $\tr\nolimits_1$ is a functorial exact
construction, the \emph{$n$-th transgression bundle}
can be defined recursively as
$$\tr\nolimits_n(E)=\tr\nolimits_1\tr\nolimits_{n-1}(E)=
\tr\nolimits_1\stackrel{n}{\dots}\tr\nolimits_1(E).$$ It is a
vector bundle on $X\times (\P^1)^{n}$.

The Fubini-Study metric on $\P^1$ induces a metric on the line
bundle $\mathcal{O}(1)$. We denote by $\overline{\mathcal{O}(1)}$
the corresponding hermitian line bundle.

Then, given a hermitian $n$-cube $\overline{E}$, the
transgression bundle $\tr_n(\overline{E})$ has a hermitian metric
naturally induced by the metric of $\overline{E}$ and by the
metric of $\overline{\mathcal{O}(1)}$. If $\overline{E}$ is an $n$-cube with canonical kernels, then we have
\begin{eqnarray*}
 \tr\nolimits_n(\overline{E})|_{x_i=0} & = &  \tr\nolimits_{n-1}(\partial_i^1\overline{E}), \\
 \tr\nolimits_n(\overline{E})|_{y_i=0} & \cong &  \tr\nolimits_{n-1}(\partial_i^0\overline{E})
 \oplus^{\bot} \tr\nolimits_{n-1}(\partial_i^2\overline{E}),
\end{eqnarray*}
where $\cong$ is an isometry and $\oplus^{\bot}$ means the orthogonal direct sum.

Consider now  the differential form $W_n$ from \cite[$\S$6]{Burgos1}:
$$W_n=\frac{1}{2n!}\sum_{i=1}^n
(-1)^{i}S_n^i, $$ with
$$S_n^i= \sum_{\sigma\in \mathfrak{S}_n}(-1)^{\sigma}\log |z_{\sigma(1)}|^2
\frac{dz_{\sigma(2)}}{z_{\sigma(2)}}\wedge \cdots \wedge
\frac{dz_{\sigma(i)}}{z_{\sigma(i)}}\wedge \frac{d\bar
z_{\sigma(i+1)}}{\bar z_{\sigma(i+1)}}\wedge \cdots \wedge
\frac{d\bar z_{\sigma(n)}}{\bar z_{\sigma(n)}}.$$

\begin{thm}[(Burgos-Wang, \cite{Burgos1})] Let $X$ be a smooth proper complex variety.
\begin{enumerate}[(1)]
\item  The following map is a chain morphism
\begin{eqnarray}\label{ch3}
\wZ \widehat{C}_n(X)  &\xrightarrow{\ch}& \bigoplus_{p\geq 0}
\mmD^{2p-n}(X,p), \\
\overline{E} & \mapsto &  \ch\nolimits_n(\overline{E}):=
\frac{(-1)^n}{(2\pi i)^n} \int_{(\P^1)^n}
\ch(\tr\nolimits_n(\lambda(\overline{E})))\wedge W_n. \nonumber
\end{eqnarray}
\item The composition $$K_n(X)
\xrightarrow{\Cub} H_{n}(\wZ \widehat{C}_*(X)) \xrightarrow{\ch}
\bigoplus_{p\geq 0}H^{2p-n}_{\mmD}(X,\R(p)) $$ is the Beilinson
regulator.
\end{enumerate}
\end{thm}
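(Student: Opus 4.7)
The plan is to reduce the statement to the original Burgos-Wang theorem, with the modification that we now pre-compose with the chain morphism $\lambda$. This is possible because a cube with canonical kernels is in particular an emi-cube in the sense of the remark preceding the statement, so the formula defining $\ch_n(\overline{E})$ makes sense on $\lambda(\overline{E})$ and coincides literally with the Burgos-Wang expression evaluated there.

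For part (1), the strategy is to establish the identity
\[
d_{\mmD}\,\ch\nolimits_n(\overline{E}) \;=\; \ch\nolimits_{n-1}(d\overline{E})
\]
by combining three ingredients. First, since $\lambda$ is a chain morphism (the preceding lemma), we may replace $\lambda(d\overline{E})$ by $d\lambda(\overline{E})$, so it suffices to treat the case $\overline{E}$ has canonical kernels. Second, the Chern character form of any hermitian bundle is closed, so differentiating under the integral sign transfers the differential onto $W_n$. The form $W_n$ satisfies an explicit identity (computed in \cite{Burgos1}) whose current-theoretic differential is supported on the boundary divisors $\{x_i=0\}$ and $\{y_i=0\}$ of $(\P^1)^n$. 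Third, the restriction formulas recalled before the theorem,
\begin{align*}
\tr\nolimits_n(\overline{E})|_{x_i=0} &= \tr\nolimits_{n-1}(\partial_i^1\overline{E}),\\
\tr\nolimits_n(\overline{E})|_{y_i=0} &\cong \tr\nolimits_{n-1}(\partial_i^0\overline{E})\oplus^{\bot}\tr\nolimits_{n-1}(\partial_i^2\overline{E}),
\end{align*}
together with isometry-invariance and additivity of $\ch$ on orthogonal sums, convert the boundary contributions on $\{x_i=0\}$ and $\{y_i=0\}$ precisely into the terms $\ch_{n-1}(\partial_i^1\overline{E})$ and $\ch_{n-1}(\partial_i^0\overline{E})+\ch_{n-1}(\partial_i^2\overline{E})$. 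Assembling the signs from the orientation of the boundary divisors of $(\P^1)^n$ and the combinatorial sign $(-1)^n/(2\pi i)^n$ in the definition produces exactly $\ch_{n-1}(d\overline{E})$. Finally, one checks that $\ch_n$ vanishes on degenerate cubes (both the $s_i^0$ and $s_i^1$ degeneracies force $\tr_n(\overline{E})$ to be pulled back from a lower-dimensional product of projective lines, so the fibre integral is zero), and hence descends to $\wZ\widehat{C}_*(X)$.

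For part (2), once (1) is established, the induced map on $H_n(\wZ\widehat{C}_*(X))$ agrees in homology with the Burgos-Wang regulator representative, because $\lambda$ is a chain morphism into $\wZ K\widehat{C}_*(X)$ whose composition with the inclusion of canonical-kernel cubes into emi-cubes induces the identity on homology: indeed, both $\lambda(\overline{E})$ and $\overline{E}$ represent the same hermitian cube up to an isometry of kernels, and $\ch$ is invariant under such isometries. Thus the composition $\Cub \circ (\text{Hurewicz})$ followed by $\ch$ coincides in homology with the Burgos-Wang composition, which is the Beilinson regulator by the main theorem of \cite{Burgos1}.

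The main obstacle will be the sign and combinatorial bookkeeping in step~(1): keeping track of the $n!$, the factor $1/(2\pi i)^n$, and the sign $(-1)^n$ against the alternating sum $\sum_{i=1}^n (-1)^i(\partial_i^0-\partial_i^1+\partial_i^2)$ defining the differential on $\wZ\widehat{C}_*(X)$, while simultaneously tracking the orientation of each boundary divisor of $(\P^1)^n$ and the precise formula for $dW_n$. Once this bookkeeping is carried out, the geometric content reduces to Stokes' theorem plus the restriction formulas for the transgression bundle, and no new analytic input beyond what is already in \cite{Burgos1} is required.
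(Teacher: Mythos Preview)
Your sketch is essentially correct and traces the argument of the original Burgos--Wang paper: Stokes' theorem applied to $\ch(\tr_n(\lambda(\overline{E})))\wedge W_n$ on $(\P^1)^n$, the current equation for $d_{\mmD}W_n$, the restriction formulas for $\tr_n$ along the divisors $\{x_i=0\}$ and $\{y_i=0\}$, and additivity of $\ch$ on orthogonal sums. The only point to be slightly more careful about is the vanishing on degenerate cubes: the degeneracy $s_i^1$ does not literally make $\tr_n(\overline{E})$ a pullback from $(\P^1)^{n-1}$; rather, one checks that the resulting integrand against $W_n$ vanishes after integration (this is handled in \cite{Burgos1} via the symmetry of $W_n$ and the structure of the transgression of a cube with an identity edge). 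This is a minor refinement, not a gap.

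However, you should be aware that the paper itself gives \emph{no} proof of this theorem: it is stated with attribution to Burgos--Wang \cite{Burgos1} and closed immediately with a $\square$. The only novelty relative to the original reference is the insertion of the chain morphism $\lambda$ to canonical-kernel cubes in place of the emi-cube construction, and the preceding remark explains why this is harmless (canonical-kernel cubes are emi-cubes, and $\ch$ depends only on the emi-cube structure). So your proposal goes well beyond what the paper does; if the intent is to match the paper, a one-line reference to \cite{Burgos1} together with the observation that $\lambda$ is a chain morphism into emi-cubes suffices.
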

\vist
The form $\ch\nolimits_n(\overline{E})$ is called the
\emph{Bott-Chern form} of the hermitian $n$-cube $\overline{E}$.

\begin{rem}
Observe that, by means of the isomorphism $N\widehat{C}_*(X)\cong
\wZ \widehat{C}_*(X)$, the Chern character is also
represented by the morphism
\begin{eqnarray*}
 N\widehat{C}_*(X) & \xrightarrow{\ch} &  \bigoplus_{p\geq 0}
\mmD^{2p-*}(X,p) \\ \overline{E} \in  N\widehat{C}_n(X) & \mapsto
&\ch\nolimits_n(\overline{E}).
\end{eqnarray*}
\end{rem}

\medskip
\textbf{Differential forms and projective lines.}
For some constructions in the sequel, it is convenient to factor the morphism ``$\ch$'' through a complex consisting of the Deligne complex
of differential forms on $X\times (\P^1)^n$. This construction was introduced in \cite{Burgos1}.

Over any base scheme, the cartesian
product of projective lines $(\P^1)^{\cdot}$ has a cocubical scheme structure.
Specifically, the coface and codegeneracy maps
\begin{eqnarray*}
\delta_j^i : (\P^1)^n &\rightarrow & (\P^1)^{n+1},\quad i=1,\dots,n,\ j=0,1, \\
\sigma^i : (\P^1)^n &\rightarrow & (\P^1)^{n-1} ,\quad
i=1,\dots,n,
\end{eqnarray*}
are defined as
\begin{eqnarray*}
  \delta_0^i(x_1,\dots,x_n) &=& (x_1,\dots,x_{i-1},(0:1),x_{i},\dots,x_n), \\
\delta_1^i(x_1,\dots,x_n) &=& (x_1,\dots,x_{i-1},(1:0),x_{i},\dots,x_n), \\
\sigma^i(x_1,\dots,x_n) &=& (x_1,\dots,x_{i-1},x_{i+1},\dots,x_n).
\end{eqnarray*}

Let $X$ be a smooth proper complex variety and fix $\P^1=\P_{\C}^1$.
The coface and codegeneracy maps induce,
for every $i=1,\dots,n$ and $l=0,1$, morphisms
\begin{eqnarray*}
\delta_i^l: \mathcal{D}^*(X\times (\P^1)^n,p) & \rightarrow &
\mathcal{D}^*(X\times (\P^1)^{n-1},p), \\
\sigma_i: \mathcal{D}^*(X\times (\P^1)^{n-1},p) & \rightarrow &
\mathcal{D}^*(X\times (\P^1)^n,p).
\end{eqnarray*}
Let $\mmD_{\P}^{*,*}(X,p)$  be the 2-iterated cochain complex
given by
$$\mmD_{\P}^{r,-n}(X,p) = \mmD^{r}(X\times(\P^1)^{n},p)$$ and differentials
$(d_{\mmD},\delta=\sum_{i=1}^n(-1)^i (\delta_i^0-\delta_i^1))$ and
denote by $\mmD_{\P}^{*}(X,p)$ the associated simple complex.

Let $(x:y)$ be homogeneous coordinates in $\P^1$ and consider
$$h =-\frac{1}{2} \log \frac{(x-y)\overline{(x-y)}}{x\bar{x}+y\bar{y}}.$$ It defines a
function on the open set $\P^1 \setminus \{1\}$,  with logarithmic
singularities along $1$. Consider the differential
$(1,1)$-form
$$\omega = d_{\mmD} h \in \mmD^2(\P^1,1).$$
This is a smooth form all over $\P^1$ representing the class of
the first Chern class of the canonical bundle of $\P^1$,
$c_1(\mathcal{O}_{\P^1}(1))$.

For every $n$, denote by $\pi:X\times (\P^1)^n\rightarrow X$ the projection onto $X$ and
let $p_i:X\times (\P^1)^n\rightarrow \P^1$ be the projection onto
the $i$-th projective line. Denote, for $i=1,\dots,n$,
\begin{eqnarray*}
\omega_i & = & p_i^*(\omega)\in \mmD^{2}(X\times (\P^1)^{n},1)
\end{eqnarray*}

Let
\begin{equation}
D_n^r = \sum_{i=1}^n \sigma_i(\mmD^{r}(X\times (\P^1)^{n-1},p)),
\end{equation}
be the complex of degenerate elements and let $\mmW_n^*$ be the
subcomplex of $\mmD^*(X\times (\P^1)^n,p)$ given by
\begin{equation}
\mathcal{W}_n^r = \sum_{i=1}^n \omega_i\wedge
\sigma_i(\mmD^{r-2}(X\times (\P^1)^{n-1},p-1)).
\end{equation}
This complex is meant to kill the cohomology classes coming from
the projective lines.
We define the $2$-iterated complex
$$\wmD_{\P}^{r,-n}(X,p):=\wmD^r(X\times (\P^1)^n,p):=\frac{\mmD^r(X\times (\P^1)^n,p)}{D_n^r + \mmW_n^r}$$
and denote by $\wmD_{\P}^{*}(X,p)$ the associated simple complex.

\begin{prp}\label{qisoproj} The natural map
$$\mmD^*(X,p)=\wmD_{\P}^{*,0}(X,p) \xrightarrow{i}  \wmD_{\P}^*(X,p) $$
is a quasi-isomorphism.
\end{prp}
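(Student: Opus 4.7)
The plan is to filter the simple complex $\wmD_\P^*(X,p)$ by the cocubical degree, yielding a spectral sequence whose $E_0$-page is the bicomplex $\wmD_\P^{*,*}(X,p)$ with vertical differential $d_\mmD$ and horizontal differential $\delta$. The $E_1$-page is
$$E_1^{r,-n} \;=\; H^r\!\left(\wmD^*(X \times (\P^1)^n,p),\, d_\mmD\right),$$
the $d_\mmD$-cohomology of the $n$-th column. Since the inclusion $i$ identifies $\mmD^*(X,p)$ with the $n=0$ column, it suffices to prove that this $E_1$-term equals $H^r_\mmD(X,\R(p))$ for $n=0$ and vanishes for $n\geq 1$; the spectral sequence then degenerates at $E_1$ and reads off the desired quasi-isomorphism.

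The case $n=0$ is immediate because $D_0^*=\mmW_0^*=0$. For $n\geq 1$ the assertion amounts to acyclicity of the quotient $\wmD^*(X\times (\P^1)^n,p)$, which I would derive from the iterated projective bundle formula. For any smooth proper complex variety $Y$,
$$H^r_\mmD(Y\times \P^1,\R(q)) \;\cong\; \pi^*H^r_\mmD(Y,\R(q))\,\oplus\, [\omega]\cdot\pi^*H^{r-2}_\mmD(Y,\R(q-1)),$$
and iterating $n$ times one obtains a Künneth-style decomposition of $H^*_\mmD(X\times(\P^1)^n,\R(p))$ indexed by subsets $I\subseteq\{1,\dots,n\}$, with summand $\omega_I\cdot\pi^*H^{*-2|I|}_\mmD(X,\R(p-|I|))$, where $\omega_I=\bigwedge_{i\in I}\omega_i$.

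I would then verify at the form level that every generator of this decomposition already lies in $D_n^*+\mmW_n^*$. If $I\subsetneq\{1,\dots,n\}$, choose $j\notin I$; the form $\omega_I\wedge \pi^*\alpha$ does not involve the $j$-th projective line, so it equals $\sigma_j$ of the analogous form on $X\times(\P^1)^{n-1}$ and hence lies in $D_n^*$. If $I=\{1,\dots,n\}$, then $\omega_I\wedge\pi^*\alpha=\omega_n\wedge\sigma_n(\omega_{I\setminus\{n\}}\wedge\pi'^*\alpha)$ lies in $\mmW_n^*$. This shows that $H^*(D_n^*+\mmW_n^*)\to H^*_\mmD(X\times(\P^1)^n,\R(p))$ is surjective, so every $d_\mmD$-cohomology class of the numerator is killed in the quotient.

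The main obstacle is to promote this surjectivity to an isomorphism, equivalently to verify that a $d_\mmD$-exact form lying in $D_n^*+\mmW_n^*$ is already exact within $D_n^*+\mmW_n^*$. I would handle this by induction on $n$, peeling off one projective line at a time via the form-level splitting of the projective bundle formula $\mmD^*(Y\times\P^1,q)\simeq \mmD^*(Y,q)\oplus\mmD^{*-2}(Y,q-1)$ realized by fibre-integration against $1$ and against $\omega$. One checks that this homotopy equivalence carries the subcomplexes $D_n^*$ and $\mmW_n^*$ into their analogues on $X\times(\P^1)^{n-1}$, which lets the inductive hypothesis conclude that each column is acyclic for $n\geq 1$. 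With this in hand, the spectral sequence collapses at $E_1$ and $i$ is a quasi-isomorphism.
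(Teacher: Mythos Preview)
Your approach is essentially the same as the paper's: filter the simple complex by the cocubical degree, obtain a spectral sequence whose $E_1$-columns are $H^r(\wmD^*(X\times(\P^1)^n,p))$, and show these vanish for $n\geq 1$ so that the sequence degenerates. The paper simply cites the ``Dold--Thom isomorphism in Deligne--Beilinson cohomology'' for the vanishing \eqref{spectral2} and refers to \cite[Lemma~1.3]{Burgos1} for the spectral sequence step; you have unpacked that citation into the iterated projective bundle formula and an explicit analysis of the subcomplex $D_n^*+\mmW_n^*$, which is exactly what the Dold--Thom statement amounts to here. Your honest flag about promoting surjectivity to an isomorphism is the only part left sketchy, and your proposed inductive peeling via fibre integration is the standard way to close it (and is what underlies the cited lemma).
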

\begin{proof} The proof is analogous to the proof of \cite[Lemma 1.3]{Burgos1}. It follows from an spectral sequence argument together with the fact that, by the Dold-Thom isomorphism in
Deligne-Beilinson cohomology,
\begin{equation}\label{spectral2}
H^r(\wmD^{*}(X\times (\P^1)^n,p))=0\quad  \forall n>0.
\end{equation}
\end{proof}

For the next proposition, $\bullet$ denotes the product in the Deligne complex as described in \cite[$\S$3]{Burgos2}.
\begin{prp}[(Burgos-Wang, \cite{Burgos1})]\label{qinverse}
There is a quasi-isomorphism of complexes
$$\wmD_{\P}^*(X,p) \xrightarrow{\varphi} \mmD^{*}(X,p),$$
given by
$$ \alpha \in \mmD^{r}(X\times (\P^1)^n,p) \mapsto
\pi_*(\alpha\bullet W_n)=\left\{\begin{array}{ll} \frac{1}{(2\pi
i)^{n}}\int_{(\P^1)^n} \alpha\bullet W_n & n>0, \\
\alpha & n=0. \end{array} \right.$$
This morphism is the quasi-inverse of the quasi-isomorphism $i$ of Proposition \ref{qisoproj}.
\end{prp}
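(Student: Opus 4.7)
The plan is to verify that $\varphi$ is a well-defined chain morphism satisfying $\varphi\circ i=\Id$ on $\mmD^*(X,p)$; since $i$ is already a quasi-isomorphism by Proposition \ref{qisoproj}, it will then follow that $\varphi$ is a quasi-isomorphism with $\varphi_*=i_*^{-1}$ at the level of homology.

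First I would establish \textbf{well-definedness}: the formula $\alpha\mapsto \pi_*(\alpha\bullet W_n)$ must vanish on the sum $D_n^r+\mmW_n^r$ that is being quotiented out. For a degenerate form $\alpha=\sigma_i(\beta)$, Fubini reduces the integral to integrating, along the $i$-th projective factor, the corresponding slice of $\alpha\bullet W_n$; from the explicit description of $W_n$ as an alternating sum involving $\log|z_j|^2$ and the forms $dz_j/z_j$, $d\bar z_j/\bar z_j$ in all $n$ variables, no summand of $W_n$ has full $(1,1)$-type in the $i$-th pair $(z_i,\bar z_i)$, so the slice integrates to zero. For $\alpha=\omega_i\wedge\sigma_i(\beta)$, the same Fubini reduction combined with an explicit computation of the slice integral of $\omega_i\wedge W_n$ along the $i$-th $\P^1$ delivers the claim.

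Next, I would check \textbf{compatibility with differentials}. Commutation with $d_{\mmD}$ is immediate from the naturality of $\pi_*$ and the fact that $W_n$ is smooth. The compatibility with the cocubical differential $\delta$ rests on a transgression-type identity for $W_n$, which, up to elements that vanish modulo $D_n+\mmW_n$, expresses $\delta^{*} W_n$ as a $d_{\mmD}$-exact form. Pushing forward via $\pi_*$ then converts the $\delta$-contribution into an honest coboundary in $\mmD^*(X,p)$, yielding the desired compatibility. This identity is essentially the content of the Burgos--Wang computation carried out in \cite{Burgos1}. Finally, for $n=0$ the defining formula reduces tautologically to the identity, so that $\varphi\circ i=\Id_{\mmD^*(X,p)}$ on the nose at the chain level; combined with Proposition \ref{qisoproj} this is enough to conclude.

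The main obstacle is the transgression identity for $W_n$ in the second step: verifying that $\delta^{*}W_n$ is $d_{\mmD}$-exact modulo $D_{n+1}+\mmW_{n+1}$ requires a careful combinatorial argument unwinding the alternating sum over $\mathfrak{S}_n$ in the definition of $S_n^i$ and matching boundary insertions with Stokes-type terms. Once that identity is established, all remaining verifications reduce to standard applications of Fubini, naturality of $\pi_*$, and the $n=0$ tautology.
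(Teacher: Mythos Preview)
The paper itself does not prove this proposition; it is stated with attribution to Burgos--Wang \cite{Burgos1} and closed with a bare QED box, so there is no argument in the paper to compare against. That said, your sketch contains a genuine error in the second step that undermines the mechanism you propose.

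You write that ``commutation with $d_{\mmD}$ is immediate from the naturality of $\pi_*$ and the fact that $W_n$ is smooth.'' But $W_n$ is \emph{not} smooth on $(\P^1)^n$: each summand of $S_n^i$ involves $\log|z_j|^2$, $dz_j/z_j$, or $d\bar z_j/\bar z_j$, all of which are singular precisely along the coordinate hyperplanes $z_j=0$ and $z_j=\infty$, i.e.\ exactly along the images of the coface maps $\delta^i_0,\delta^i_1$. Consequently $\pi_*$ does \emph{not} commute with $d_{\mmD}$ on $\alpha\bullet W_n$; applying Stokes on the complement of the singular locus produces residue terms supported on those hyperplanes. The actual Burgos--Wang computation shows that, as currents, $d_{\mmD}[W_n]$ is (up to sign) the sum of the pushforwards of $[W_{n-1}]$ along the face inclusions. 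It is precisely these residue contributions that yield the $\varphi(\delta\alpha)$ term, so the compatibility with $d_{\mmD}$ and with $\delta$ are not two separate verifications but a single identity. Your proposed ``transgression identity'' expressing ``$\delta^*W_n$'' as $d_{\mmD}$-exact is therefore misformulated (and $\delta^*W_n$ does not literally make sense, since $W_n$ is singular on the faces); the correct identity is the current equation for $d_{\mmD}[W_n]$. The overall strategy---check well-definedness, check the chain-map property, observe $\varphi\circ i=\Id$ and invoke Proposition~\ref{qisoproj}---is sound, but the heart of the argument lies in the singular behaviour of $W_n$, not in any smoothness.
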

\vist

\begin{prp}[(Burgos-Wang, \cite{Burgos1})]
The map
\begin{eqnarray*}
\wZ \widehat{C}_n(X)  &\xrightarrow{\ch}& \bigoplus_{p\geq 0}
\wmD^{2p-n}_{\P}(X,p), \\
\overline{E} & \mapsto &  \ch(\tr\nolimits_n(\lambda(\overline{E})))
\end{eqnarray*}
is a chain morphism. Therefore, the morphism $\ch$ of \eqref{ch3} factors through the complex
$\wmD^{2p-*}_{\P}(X,p)$ in the form
$$  \wZ \widehat{C}_n(X) \xrightarrow{\ch}  \bigoplus_{p\geq 0}
\wmD^{2p-n}_{\P}(X,p) \xrightarrow{\varphi} \bigoplus_{p\geq 0} \mmD^{2p-*}(X,p).$$
\end{prp}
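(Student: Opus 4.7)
The plan is to verify three things: (i) that $\overline{E} \mapsto \ch(\tr\nolimits_n(\lambda(\overline{E})))$ is well-defined modulo $D_n^r + \mmW_n^r$ and therefore lands in $\wmD^{2p-n}_{\P}(X,p)$; (ii) that it intertwines the differentials on both sides; and (iii) that post-composition with the quasi-isomorphism $\varphi$ of Proposition \ref{qinverse} recovers the map $\ch$ of \eqref{ch3}.

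For (i), I would take a degenerate hermitian cube with canonical kernels, $\overline{E} = s_i^k \overline{F}$, and show that $\ch(\tr\nolimits_n(\overline{E})) \in D_n^r + \mmW_n^r$. When $k = 0$, the degeneracy inserts a split short exact sequence in direction $i$, and Definition \ref{definiciotransgressio} identifies the first transgression in this direction as a bundle pulled back from $X \times (\P^1)^{n-1}$ along $\sigma^i$; its Chern form therefore lies in $\sigma^i(\mmD^{\ast}(X \times (\P^1)^{n-1}, p)) \subseteq D_n^r$. When $k = 1$, the inserted sequence is split in the complementary slot, and a direct computation from Definition \ref{definiciotransgressio} shows that the transgression acquires $\mathcal{O}(1)$ on the $i$-th projective line as a direct summand, so the Chern form is divisible by $\omega_i$ and lands in $\mmW_n^r$. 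This is the step I expect to be the main obstacle, since it requires carefully unwinding the transgression construction on degenerate sequences and verifying that $\lambda$, which is applied upstream, is itself compatible with degeneracies.

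For (ii), I would combine the restriction identities recalled in the excerpt,
$$\tr\nolimits_n(\overline{E})|_{x_i=0} = \tr\nolimits_{n-1}(\partial_i^1 \overline{E}), \quad \tr\nolimits_n(\overline{E})|_{y_i=0} \cong \tr\nolimits_{n-1}(\partial_i^0 \overline{E}) \oplus^{\bot} \tr\nolimits_{n-1}(\partial_i^2 \overline{E}),$$
(valid for canonical-kernel cubes, which is exactly why $\lambda$ is applied first) with additivity of the Chern form under orthogonal direct sums and its invariance under isometries. Under the identification of $\delta_i^0, \delta_i^1$ with restriction to $x_i=0$, $y_i=0$, these yield $\delta\, \ch(\tr\nolimits_n(\lambda \overline{E})) = \ch(\tr\nolimits_{n-1}(\lambda\, d\overline{E}))$; since the Chern form of a hermitian bundle is $d_{\mmD}$-closed, the total differential $d_{\mmD} + \delta$ of the image matches the image of the cubical differential. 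Part (iii) is then essentially bookkeeping: Proposition \ref{qinverse} defines $\varphi$ as integration of $\alpha \bullet W_n$ over $(\P^1)^n$, and on the relevant top-degree forms $\bullet$ agrees with $\wedge$ up to the sign $(-1)^n$ dictated by the Deligne conventions, so that $\varphi(\ch(\tr\nolimits_n(\lambda \overline{E})))$ reproduces the defining formula for $\ch\nolimits_n(\overline{E})$ in \eqref{ch3}.
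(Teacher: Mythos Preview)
Your approach is correct and matches the standard Burgos--Wang argument; the present paper does not actually supply a proof of this proposition but merely cites \cite{Burgos1} (the analogous computation is carried out later in the paper in the proof of Proposition~\ref{ch}, along exactly the lines you sketch). One small correction to step~(i): for the degeneracy whose inserted sequence in direction $i$ is $E\xrightarrow{\mathrm{id}} E\to 0$, the transgression is $E(1)=p_0^*E\otimes p_i^*\mathcal{O}(1)$, i.e.\ $\mathcal{O}(1)$ appears as a \emph{tensor} factor rather than a direct summand, so the Chern form is $\ch(E)\cdot(1+\omega_i)$ and lands in $D_n^r+\mmW_n^r$, not in $\mmW_n^r$ alone; the conclusion that it vanishes in the quotient is unaffected.
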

\vist

\section{Higher arithmetic K-theory}
In this section we focus on the definition of the higher arithmetic $K$-groups of an arithmetic variety.
We start by discussing the extension of the Chern character on complex varieties to arithmetic varieties. Then, we recall the
definition of the arithmetic $K$-group given by Gillet and Soul{\'e} in \cite{GilletSouleClassesII}. Finally, the last two sections
are devoted to review the two definitions of  higher arithmetic $K$-theory.

Following \cite{GilletSouleIHES}, by an \emph{arithmetic variety} we mean a regular
quasi-projective scheme over an arithmetic ring. In this section we restrict ourselves to proper arithmetic
varieties over the arithmetic ring $\Z$. Note, however, that most
of the results are valid under the less restrictive hypothesis of
the variety being proper over $\C$. Moreover, one could extend the
definition of higher arithmetic $K$-groups, $\widehat{K}_n(X)$, to
quasi-projective varieties, by considering vector bundles with
hermitian metrics smooth at infinity and the complex of
differential forms $\wmD_{\P}^{*}(X,p)$ introduced in the previous section (and by considering differential forms with logarithmic
singularities).

\subsection{Chern character for arithmetic varieties}

If $X$ is an arithmetic variety over $\Z$, let $X(\C)$ denote the
associated complex variety, consisting of the $\C$-valued points
on $X$. Let $F_{\infty}$ denote the complex conjugation on $X(\C)$
and $X_{\R}=(X(\C),F_{\infty})$ the associated real variety.

The \emph{real Deligne-Beilinson cohomology of $X$} is
defined as the cohomology of $X_{\R}$, i.e.
$$H_{\mmD}^{n}(X,\R(p)) = H_{\mmD}^{n}(X_{\R},\R(p))=H_{\mmD}^n(X(\C),\R(p))^{\overline{F_{\infty}^*}=id}. $$
It is computed as the cohomology of the real Deligne complex:
$$\mmD^n(X,p)= \mmD^n(X_{\R},p)= \mmD^n(X(\C),p)^{\overline{F_{\infty}^*}=id},$$
that is, we have
$$H^n_{\mmD}(X,\R(p))\cong H^n(\mmD^n(X,p),d_{\mmD}).$$

\begin{dfn}
Let $X$ be a proper arithmetic variety over $\Z$. A
\emph{hermitian vector bundle} $\overline{E}$ over $X$ is a pair
$(E,h)$, where $E$ is a locally free sheaf on $X$ and where $h$ is
a $F_{\infty}^*$-invariant smooth hermitian metric on the associated
vector bundle $E(\C)$ over $X(\C)$.
\end{dfn}
Let
$\widehat{\mmP}(X)$ denote the category of hermitian vector
bundles over $X$. The simplicial set $\widehat{S}_{\cdot}(X)$ and
the chain complexes $\Z \widehat{C}_*(X)$, $\wZ \widehat{C}_*(X)$
and $N\widehat{C}_*(X)$ are defined accordingly.

If $\overline{E}$ is a hermitian vector bundle over $X$, the Chern
character form $\ch(\overline{E})$ is $\overline{F_{\infty}^*}$-invariant.
Therefore
$$\ch(\overline{E}) \in \bigoplus_{p\geq 0}\mmD^{2p}(X,p).$$
It follows that the chain morphism of \eqref{ch3} gives a chain morphism
$$\Z \widehat{S}_{*}(X)[-1] \xrightarrow{\Cub} \wZ \widehat{C}_{*}(X)
 \xrightarrow{\ch} \bigoplus_{p\geq 0} \mmD^{2p-*}(X,p). $$

\subsection{Arithmetic $K_0$-group}
In \cite[$\S$6]{GilletSouleClassesII}  Gillet and Soul{\'e} defined the
arithmetic $K_0$-group of an  arithmetic variety, denoted by
$\widehat{K}_0(X)$. We give here a slightly different presentation
using the Deligne complex of differential forms and the
differential operator $-2\partial\bar{\partial}$.

Let $X$ be an arithmetic variety and let $\widetilde{\mmD}^{
*}(X,p)=\mmD^{ *}(X,p)/\im d_{\mmD}$. Consider pairs
$(\overline{E},\alpha)$, where $\overline{E}$ is a hermitian
vector bundle over $X$ and where $\alpha \in \bigoplus_{p\geq
0}\wmD^{2p-1}(X,p)$ is a differential form. Then,
$\widehat{K}_0(X)$ is the quotient of the free abelian group
generated by these pairs by the subgroup generated by the sums
$$(\overline{E}^0,\alpha_0)+(\overline{E}^{2},\alpha_2) -(\overline{E}^1, \alpha_0+\alpha_2-\ch(\overline{E})),  $$
for every exact sequence of hermitian vector bundles over $X$,
$$\overline{E}: 0\rightarrow \overline{E}^0 \rightarrow \overline{E}^1 \rightarrow \overline{E}^2 \rightarrow 0, $$
and every $\alpha_0,\alpha_2 \in \bigoplus_{p\geq 0}\wmD^{2p-1}(X,p)$.

Among other properties, this group fits in an exact sequence
$$K_{1}(X) \xrightarrow{\ch}  \bigoplus_{p\geq 0} \widetilde{\mmD}^{2p-1}(X,p)\xrightarrow{a} \widehat{K}_0(X)
\xrightarrow{\zeta} K_0(X) \rightarrow 0$$ (see
\cite{GilletSouleClassesII} for details).

Gillet and Soul{\'e} in \cite{GilletSouleClassesII}, together with R{\"o}ssler in
\cite{Roessler1}, showed that there is a $\lambda$-ring structure
on $\widehat{K}_0(X)$.

\subsection{Deligne-Soul{\'e} higher arithmetic $K$-theory}
Although  there is no
reference in which the theory is developed, it has been suggested
by Deligne and Soul{\'e}  (see \cite[$\S$III.2.3.4]{Soule2}  and
\cite[Remark 5.4]{Delignedeterminant}) that the higher arithmetic
$K$-theory should be obtained as the homotopy groups of the
homotopy fiber of a representative of the Beilinson regulator. We
sketch here the construction, in order to show that Adams
operations can be defined.

Consider the b\^ete truncation at $n>0$ of the complex $\mmD^{2p-*}(X,p)$, denoted by $\sigma_{>0}\mmD^{2p-*}(X,p)$.
Let
$$ \widehat{\ch}: \wZ \widehat{C}_n(X) \rightarrow \bigoplus_{p \geq 0} \sigma_{>0}\mmD^{2p-n}(X,p), $$
be the composition of $\ch:\wZ \widehat{C}_n(X) \rightarrow \bigoplus_{p \geq 0} \mmD^{2p-n}(X,p)$ with the natural map
$$\bigoplus_{p \geq 0}  \mmD^{2p-n}(X,p)\rightarrow \bigoplus_{p \geq 0} \sigma_{>0}\mmD^{2p-n}(X,p).$$

Let $\mmK({\cdot})$ be the Dold-Puppe functor from the category of
chain complexes of abelian groups to the category of simplicial
abelian groups (see \cite{DoldPuppe}). Consider the morphism
$$\mmK(\widehat{\ch}): \widehat{S}_{\cdot}(X) \rightarrow \mmK_{\cdot}(\Z \widehat{S}_{*}(X))\xrightarrow{\Cub}
\mmK( \wZ \widehat{C}_*(X)) \xrightarrow{\widehat{\ch}}
\mmK\Big(\bigoplus_{p\geq 0}\sigma_{>0}\mmD^{2p-*}(X,p)\Big),$$ and denote by
$|\mmK(\widehat{\ch})|$  the morphism induced on the realization
of the simplicial sets.

\begin{dfn}\label{k0}
For every $n\geq 0$,  the \emph{(Deligne-Soul{\'e}) higher
arithmetic $K$-group} of $X$ is defined as
$$\widehat{K}_n(X) = \pi_{n+1}(\mathrm{Homotopy \ fiber\ of \  }|\mmK(\widehat{\ch})|). $$
\end{dfn}

\begin{prp}\label{Delignesouleprops} Let $X$ be a proper arithmetic variety.
Then,
\begin{enumerate}[(i)]
\item The group $\widehat{K}_0(X)$ as defined in Definition \ref{k0} agrees with the arithmetic $K$-group
defined by Gillet and Soul{\'e} in \cite{GilletSouleClassesII}.
\item Let $s(\widehat{\ch})$ denote the simple complex associated to the
chain morphism $\widehat{\ch}$.
If $n>0$, there is an isomorphism $\widehat{K}_n(X)_{\Q}
\cong H_n(s(\widehat{\ch}),\Q).$ \item There is a long exact
sequence
$$ \cdots \rightarrow K_{n+1}(X)  \xrightarrow{\ch} H_{\mmD}^{2p-n-1}(X,\R(p))
\xrightarrow{a} \widehat{K}_n(X)  \xrightarrow{\zeta} K_n(X)
 \rightarrow \cdots $$
 with end
 $$   \cdots \rightarrow K_{1}(X)  \xrightarrow{\ch} \wmD^{2p-1}(X,p)
\xrightarrow{a} \widehat{K}_0(X) \xrightarrow{\zeta} K_0(X)
 \rightarrow 0. $$
 \end{enumerate}
 \end{prp}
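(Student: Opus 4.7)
\emph{Proof plan.} All three parts follow from the long exact homotopy sequence of the fibration
$$\mathrm{hofib}(|\mmK(\widehat{\ch})|)\to |\widehat{S}_{\cdot}(X)|\xrightarrow{|\mmK(\widehat{\ch})|}\Big|\mmK\Big(\bigoplus_{p\geq 0}\sigma_{>0}\mmD^{2p-*}(X,p)\Big)\Big|.$$
For (iii), I identify the three terms: Waldhausen's theorem gives $\pi_{n+1}|\widehat{S}_{\cdot}(X)|=K_n(X)$; by definition $\pi_{n+1}$ of the fiber is $\widehat{K}_n(X)$; and by the Dold--Kan correspondence $\pi_r|\mmK(C_*)|=H_r(C_*)$. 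A direct computation of the homology of $\sigma_{>0}\mmD^{2p-*}(X,p)$ shows it equals $H^{2p-r}_{\mmD}(X,\R(p))$ for $r\geq 2$, equals $\wmD^{2p-1}(X,p)$ for $r=1$, and vanishes for $r=0$. Feeding these identifications into the long exact sequence of the fibration yields the stated exact sequence, and the connecting map is identified with the Beilinson regulator by the Burgos--Wang theorem, since by construction $\widehat{\ch}\circ\Cub$ represents it.

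For (ii), after tensoring with $\Q$, the Hurewicz map $|\widehat{S}_{\cdot}(X)|\to |\mmK(\Z\widehat{S}_{*}(X))|$ induces an isomorphism in positive degrees, and McCarthy's isomorphism \eqref{mccarthyiso} then identifies the resulting space, rationally and in positive degrees, with the Dold--Puppe of (a shift of) $\wZ\widehat{C}_*(X)$. For a chain morphism $f:A_*\to B_*$ between bounded-below complexes, the homotopy fiber of $|\mmK(f)|$ is the Dold--Puppe of the corresponding shift of the simple complex $s(f)$, so its positive homotopy groups compute $H_*(s(f))$. Applied to $f=\widehat{\ch}$ this gives the claimed isomorphism $\widehat{K}_n(X)_\Q\cong H_n(s(\widehat{\ch}),\Q)$ for $n\geq 1$.

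For (i), specializing (iii) at $n=0$ produces the exact sequence
$$K_1(X)\xrightarrow{\ch}\wmD^{2p-1}(X,p)\xrightarrow{a}\widehat{K}_0(X)\xrightarrow{\zeta}K_0(X)\to 0,$$
which has the same shape as the defining exact sequence of Gillet and Soul\'e's $\widehat{K}_0(X)$ in \cite{GilletSouleClassesII}. I would construct a comparison morphism from the Gillet--Soul\'e group to the group of Definition \ref{k0} by sending a generator $(\overline{E},\alpha)$ to the class in $\pi_1$ of the fiber represented by $\overline{E}$ together with the null-homotopy prescribed by $\alpha$; the Bott--Chern equation satisfied by $\ch$ on a short exact sequence of hermitian vector bundles guarantees that this assignment respects the Gillet--Soul\'e relations, and the five-lemma applied to the two exact sequences finishes the proof. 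The main obstacle throughout is bookkeeping: one must reconcile the $[-1]$ shift built into $\Cub$, the $+1$ shift in Waldhausen's formula, and the chain/cochain reindexing of $\mmD^{2p-*}(X,p)$ so that everything lines up, and in (i) the explicit formula for the comparison map at the level of representatives must be written down with care for base-points.
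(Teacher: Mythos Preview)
Your approach for all three parts is essentially the same as the paper's: (i) and (iii) are declared to follow ``by definition'' (i.e., from the long exact sequence of the homotopy fibration, exactly as you unpack), and (ii) is deduced from McCarthy's isomorphism together with the general fact that for a chain morphism $f:A_*\to B_*$ one has $\pi_n(\HoFib(|\mmK(f)|))\cong H_n(s(f))$ for $n\geq 1$. Your treatment of (i) via an explicit comparison map and the five lemma is more detailed than the paper's one-line dismissal, but perfectly in line with it.

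There is, however, one incorrect intermediate claim in your argument for (ii). You assert that the Hurewicz map $|\widehat{S}_{\cdot}(X)|\to |\mmK(\Z\widehat{S}_{*}(X))|$ induces a rational isomorphism on homotopy groups in positive degrees. This is false in general: the right-hand side has $\pi_n\cong H_n(|\widehat{S}_{\cdot}(X)|;\Z)$, and the rational homology of a $K$-theory space is typically much larger than its rational homotopy (for a connected $H$-space the homology is a free graded-commutative algebra on the rational homotopy, so decomposables appear). What \emph{is} true, and what McCarthy actually proves, is that the composite
\[
K_n(X)_{\Q}=\pi_{n+1}(|\widehat{S}_{\cdot}(X)|)_{\Q}\xrightarrow{\mathrm{Hurewicz}} H_{n+1}(\Z\widehat{S}_*(X))_{\Q}\xrightarrow{\Cub} H_n(\wZ\widehat{C}_*(X))_{\Q}
\]
is an isomorphism. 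You should therefore not try to factor through $|\mmK(\Z\widehat{S}_*(X))|$; instead, compare directly the homotopy fiber of the map out of $|\widehat{S}_{\cdot}(X)|$ with the homotopy fiber of the chain-level map $|\mmK(\wZ\widehat{C}_*(X))|\to|\mmK(\bigoplus_p\sigma_{>0}\mmD^{2p-*}(X,p))|$. The induced map on the two long exact homotopy sequences is McCarthy's rational isomorphism on the source terms and the identity on the target terms, so the five lemma gives the rational isomorphism on fibers. With this correction your argument goes through and matches the paper's.
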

 \begin{proof}
The first and third statements follow by definition. The second
statement follows from the isomorphism of
\eqref{mccarthyiso} together with the following well-known fact (see \cite{FeliuThesis} for a proof):
\begin{lmm}
Let $(A_*,d_A)$, $(B_*,d_B)$ be two
chain complexes. Let $f:A_*\rightarrow B_*$ be a chain morphism and let
$\mmK(f):\mmK_{\cdot}(A) \rightarrow \mmK_{\cdot}(B)$ be the induced
morphism. Let $\HoFib(f)$ denote the homotopy fiber of the topological realization of $\mmK(f)$.
Then, for every $n\geq 1$, there is an isomorphism
$$ \pi_n(\HoFib(f)) \rightarrow H_n(s_*(f))$$
such that the following diagram is commutative:
$$\xymatrix@C=20pt{
 \pi_{n+1}(\mmK_{\cdot}(B))\ar[d]_{\cong} \ar[r]
&\pi_{n}(\HoFib(f))\ar[d] \ar[r] &\pi_{n}(\mmK_{\cdot}(A))
\ar[d]_{\cong}
\\  H_{n+1}(B_*) \ar[r] & H_n(s(f)) \ar[r] & H_n(A_*).
}$$
\end{lmm}

\end{proof}

In section \ref{rationaltakedakgroups}, we will endow
$\bigoplus_{n\geq 0}\widehat{K}_n(X)$ with a product structure,
induced by the product structure defined by Takeda on his higher arithmetic $K$-groups.

\subsection{Takeda higher arithmetic $K$-theory}
In this section we recall the definition of higher arithmetic
$K$-groups given by Takeda in \cite{Takeda}. He first develops a
theory of homotopy groups modified by a suitable chain morphism
$\rho$. As a particular case, the higher arithmetic $K$-groups are
given by the homotopy groups of $\widehat{S}_{\cdot}(X)$ modified
by the Chern character morphism ``$\ch$''.

Let T be a pointed CW-complex and let $C_*(T)$ be its cellular complex (see, for instance,
\cite{Hatcher}).
Let $(W_*,d)$ be a chain complex and denote
$\widetilde{W}_*=W_*/\im d$. Suppose that we are given a chain morphism
$\rho:C_*(T)\rightarrow W_*$. Consider pairs $(f,\omega)$ where
\begin{enumerate*}[$\rhd$]
\item $f:S^{n}\rightarrow T$ is a pointed cellular map, and, \item
$\omega\in \widetilde{W}_{n+1}$.
\end{enumerate*}

Let $I$ be the closed unit interval $[0,1]$ with the usual
CW-complex structure. Two pairs $(f,\omega)$ and $(f',\omega')$
are \emph{homotopy equivalent} if there exists a pointed cellular
map
$$h: S^n\times I/\{*\}\times I \rightarrow T $$
such that the following conditions hold:
\begin{enumerate}[(1)]
\item $h$ is a topological homotopy between $f$
 and $f'$, i.e.
 $$h(x,0)=f(x),\quad\textrm{and}\quad h(x,1)=f'(x). $$
 \item Let $[S^n\times I]\in C_{n+1}(S^n\times I)$ denote the fundamental chain of $S^n\times
 I$. Then,
 $$\omega'-\omega= (-1)^{n+1} \rho \big(h_*([S^n\times I])\big). $$
\end{enumerate}
Being homotopy equivalent is an equivalence relation, which we
denote by $\sim$. Then, for every $n$, the \emph{modified homotopy
group} $\widehat{\pi}_n(T,\rho)$ is defined to be the  set of all
homotopy classes of pairs as above. Takeda proves that these are in fact
abelian groups.

The higher arithmetic $K$-groups of a proper arithmetic variety $X$, as defined by Takeda, are given as the modified homotopy groups of the Waldhausen simplicial set $\whP(X)$ modified by
the representative of the Beilinson regulator ``$\ch$'' given in the previous section.

Let $X$ be a proper arithmetic variety over $\Z$. Let
$|\widehat{S}_{\cdot}(X)|$ denote the geometric realization of the
simplicial set $\widehat{S}_{\cdot}(X)$. It follows that
$|\widehat{S}_{\cdot}(X)|$ is a CW-complex.

Let $\widehat{D}^s_*(X)\subset \Z \widehat{S}_{*}(X)$ be the
complex generated by the degenerate simplices of
$\widehat{S}_{{\cdot}}(X)$. Since the cellular complex
$C_{*}(|\widehat{S}_{\cdot}(X)|)$ is naturally isomorphic to the
complex $\Z \widehat{S}_{*}(X)/\widehat{D}^s_*(X)$, we will in the sequel identify these two complexes by this isomorphism.

As shown in \cite[Theorem 4.4]{Takeda}, the map $\ch \circ \Cub$
maps the degenerate simplices of $\widehat{S}_{\cdot}(X)$ to zero.
It follows that there is a well-defined chain morphism
$$\ch:C_*(|\widehat{S}_{\cdot}(X)|)[-1] \rightarrow  \bigoplus_{p\geq 0} \mmD^{2p-*}(X,p).$$

\begin{dfn}[(Takeda)]
Let $X$ be a proper arithmetic variety over $\Z$. For every $n\geq
0$, the \emph{higher arithmetic $K$-group} of $X$,
$\widehat{K}_{n}^{T}(X)$,  is defined by
\begin{eqnarray*}
\widehat{K}_{n}^{T}(X) & = & \widehat{\pi}_{n+1}(
|\widehat{S}_{\cdot}(X)|,\ch) \\ & = & \Big\{(f:S^{n+1}\rightarrow
|\widehat{S}_{\cdot}(X)|,\ \omega)\ \big| \ \omega \in
\bigoplus_{p\geq 0} \widetilde{\mmD}^{2p-n-1}(X,p)\Big\}
\Big/\sim.
\end{eqnarray*}
\end{dfn}

Takeda proves the following results:
\begin{enumerate}[(i)]
\item For every $n\geq 0$, $\widehat{K}_{n}^{T}(X)$ is a group.
\item For every $n\geq 0$, there is a short exact sequence
$$K_{n+1}(X) \xrightarrow{\ch}  \bigoplus_{p\geq 0} \widetilde{\mmD}^{2p-n-1}(X,p)\xrightarrow{a} \widehat{K}^{T}_n(X)
\xrightarrow{\zeta} K_n(X) \rightarrow 0.$$ The morphisms
$a,\zeta$ are defined by $ a(\alpha)=[(0,\alpha)]$ and $\zeta([(f,\alpha)])=[f].$
\item There is a characteristic class
$$\widehat{K}^{T}_n(X)\xrightarrow{\ch} \bigoplus_{p\geq 0}{\mmD}^{2p-n}(X,p), $$
given by
$$  \ch([(f, \alpha)])=\ch(f_*(S^n))+d_{\mmD} \alpha.$$
 \item
$\widehat{K}^{T}_0(X)$ is isomorphic to the arithmetic $K$-group
defined by Gillet and Soul{\'e} in \cite{GilletSouleClassesII}.
\item There is a graded product on $\widehat{K}^{T}_*(X)$,
commutative up to $2$-torsion. Therefore,
$\widehat{K}^{T}_*(X)_{\Q}$ is endowed with a graded commutative
product. \item There e\-xist pull-back for ar\-bi\-trary morphisms and
push-forward for smooth and projective morphisms. A projection
formula is also proved.
\end{enumerate}

\begin{lmm}[(\cite{Takeda},Cor. 4.9)] Let $X$ be a proper arithmetic variety over $\Z$.
Then, for every $n\geq 0$, there is a canonical isomorphism
$$\widehat{K}_n(X)\cong
\ker \big(\ch: \widehat{K}_n^{T}(X)\rightarrow
\bigoplus_{p\geq 0} \mmD^{2p-n}(X,p) \big). $$
\end{lmm}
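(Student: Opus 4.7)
The plan is to compare the long exact sequences satisfied by the two candidate groups and invoke the five lemma.

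\textbf{Step 1: a four-term exact sequence for $\ker\ch$.} Starting from Takeda's short exact sequence and the characteristic class $\ch\colon\widehat{K}_n^T(X)\to\bigoplus_p\mmD^{2p-n}(X,p)$, I show that the restriction of $\zeta$ to $\ker\ch$ fits into an exact sequence
\begin{equation*}
K_{n+1}(X)\xrightarrow{\ch}\bigoplus_p H_{\mmD}^{2p-n-1}(X,\R(p))\xrightarrow{a}\ker\ch\xrightarrow{\zeta}\ker\bigl(\ch\colon K_n(X)\to\bigoplus_p H_{\mmD}^{2p-n}(X,\R(p))\bigr)\to 0.
\end{equation*}
Four elementary verifications are needed: (i) since $[\ch(x)]=\ch(\zeta(x))$ in $H_{\mmD}^{2p-n}$, the image $\zeta(\ker\ch)$ lies in the kernel of the regulator; (ii) any class in the kernel of the regulator lifts to $\ker\ch$, because given a lift $(f,\omega')$ the form $\ch(f_*(S^{n+1}))+d_{\mmD}\omega'$ is exact and can be cancelled by subtracting a primitive from $\omega'$; (iii) for $\omega\in\wmD^{2p-n-1}(X,p)$ we have $\ch(a(\omega))=d_{\mmD}\omega$, so $a(\omega)\in\ker\ch$ iff $\omega$ is closed, i.e.\ iff it represents a class in $H_{\mmD}^{2p-n-1}$; (iv) the image of $\ch\colon K_{n+1}(X)\to\wmD^{2p-n-1}$ automatically lies in the closed subspace $H_{\mmD}^{2p-n-1}$ because $\ch$ is a chain map, so intersecting Takeda's sequence with this subspace leaves $\im\ch$ unchanged.

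\textbf{Step 2: the comparison map $\Phi\colon\widehat{K}_n(X)\to\widehat{K}_n^T(X)$.} An element of $\widehat{K}_n(X)=\pi_{n+1}(\HoFib|\mmK(\widehat{\ch})|)$ is represented by a pair $(f,H)$ with $f\colon S^{n+1}\to|\widehat{S}_{\cdot}(X)|$ and $H$ a null-homotopy of $|\mmK(\widehat{\ch})|\circ f$. Under the Dold-Puppe correspondence, $H$ is encoded by a form $\omega\in\bigoplus_p\mmD^{2p-n-1}(X,p)$ satisfying $d_{\mmD}\omega=-\ch(f_*(S^{n+1}))$ (the truncation $\sigma_{>0}$ does not discard this degree). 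Define $\Phi([(f,H)])=[(f,[\omega])]\in\widehat{K}_n^T(X)$, where $[\omega]$ is the class of $\omega$ in $\wmD^{2p-n-1}(X,p)$. By construction $\ch(\Phi([(f,H)]))=\ch(f_*(S^{n+1}))+d_{\mmD}\omega=0$, so $\Phi$ lands in $\ker\ch$. Well-definedness: a homotopy between $(f,H)$ and $(f',H')$ in $\HoFib$ is a two-parameter family consisting of a homotopy $h$ from $f$ to $f'$ together with a filling of the resulting square in $|\mmK(\sigma_{>0}\mmD^{2p-*})|$; by Dold-Puppe the filling translates to the identity $\omega'-\omega=(-1)^{n+2}\ch(h_*([S^{n+1}\times I]))$ modulo $\im d_{\mmD}$, which is precisely Takeda's homotopy relation on pairs.

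\textbf{Step 3: conclusion by the five lemma.} The Deligne-Soul\'e sequence of Proposition \ref{Delignesouleprops}(iii) and the sequence of Step~1 fit into a commutative diagram with $\Phi$ in the middle column and the identity on $K_{n+1}(X)$, $\bigoplus_p H_{\mmD}^{2p-n-1}(X,\R(p))$ and $\ker(\ch\colon K_n(X)\to\bigoplus_p H_{\mmD}^{2p-n}(X,\R(p)))$. The five lemma then gives that $\Phi$ induces an isomorphism $\widehat{K}_n(X)\cong\ker\ch$. The case $n=0$ is handled separately: by Proposition \ref{Delignesouleprops}(i) and Takeda's identification of $\widehat{K}_0^T(X)$ with the Gillet-Soul\'e group $\widehat{K}_0(X)$, both sides equal $\widehat{K}_0(X)$, and the claimed map is again identified through the characteristic class.

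\textbf{Main obstacle.} The principal technical work is the construction of $\Phi$ in Step~2 and the verification that Takeda's homotopy relation and the homotopy relation inherited from $\pi_{n+1}(\HoFib)$ correspond exactly under Dold-Puppe. One must be careful to pass through the shift in the $\Cub$ morphism and the truncation $\sigma_{>0}$ without losing degree information, but once this correspondence is set up the diagram chase and five lemma are routine.
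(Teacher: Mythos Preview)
The paper does not give a proof of this lemma at all: it is stated with the citation ``(\cite{Takeda}, Cor.~4.9)'' and closed immediately with the end-of-proof symbol. So there is no argument in the paper to compare against; the result is simply imported from Takeda.

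Your sketch is a reasonable reconstruction of what such a proof must look like, and the overall architecture (build an exact sequence for $\ker\ch$, construct a comparison map $\Phi$, apply the five lemma against the long exact sequence of Proposition~\ref{Delignesouleprops}(iii)) is sound. One remark on Step~2: the passage from a null-homotopy $H$ in $|\mmK(\sigma_{>0}\mmD^{2p-*})|$ to a \emph{specific} form $\omega$ (rather than a class modulo exact forms) is not literally given by Dold--Puppe, since different null-homotopies of the same map differ by elements of $\pi_{n+2}$ of the target, i.e.\ by Deligne cohomology classes one degree up. What you actually need is that the homotopy fiber of a map into a simplicial abelian group can be modelled by pairs $(f,\omega)$ with $\omega$ taken modulo $\im d_{\mmD}$, and that the residual ambiguity in $H$ matches exactly the ambiguity already present in Takeda's definition of $\widehat{K}_n^T(X)$ via $\widetilde{\mmD}^{2p-n-1}$. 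You gesture at this in your ``Main obstacle'' paragraph, and it is indeed the nontrivial content of Takeda's Corollary~4.9; the rest of your argument (Steps~1 and~3) is routine diagram chasing once $\Phi$ is in hand.
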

\vist

\section{Rational higher arithmetic $K$-groups}
By parallelism with the algebraic situation, it is natural to
expect that the higher arithmetic $K$-groups tensored by $\Q$ can
be described in homological terms. In Proposition
\ref{Delignesouleprops}, we saw that the Deligne-Soul{\'e} higher
arithmetic $K$-groups are isomorphic to the homology groups of the
simple complex associated to the Beilinson regulator ``$\ch$'',
after tensoring by $\Q$. In this section we show that the higher arithmetic $K$-groups given by Takeda admit also, after tensoring by $\Q$, a homological
description. We prove that $\widehat{K}^{T}_n(X)_{\Q}$ can be
obtained considering a variant of the complex of cubes, together
with what we call modified homology groups.

\subsection{Modified homology groups} We briefly describe
here the analogue, in a homological context, of the modified
homotopy groups given by Takeda in \cite{Takeda}.
The modified homology groups are the dual notion of the
\emph{truncated relative cohomology groups} defined by Burgos in
\cite{Burgos2}, as one can observe comparing both definitions and
the satisfied properties. These groups appear naturally in other
contexts. For instance, one can express the description of
hermitian-holomorphic Deligne cohomology given by Aldrovandi in
\cite[$\S$2.2]{Aldrovandi}, in terms of modified homology groups.

Let $(A_*,d_A)$ and $(B_*,d_B)$ be two chain complexes and let
$A_*\xrightarrow{\rho} B_*$ be a chain morphism. If
$\widetilde{B}_*=B_*/\im d_B$,  consider pairs
$$(a,b)\in A_n\oplus \widetilde{B}_{n+1},\quad \textrm{with } d_Aa=0.$$

We define an equivalence relation as follows. We say that
$(a,b)\sim (a',b') $ if, and only if, there exists $h\in A_{n+1}$,
such that
$$d_Ah=a-a',\quad \textrm{and }\quad \rho (h) =b -b'.$$

\begin{dfn} Let $(A_*,d_A),(B_*,d_B)$ be two chain complexes and let $\rho:A_*\rightarrow B_*$ be a chain morphism.
 For every $n$, the \emph{$n$-th modified homology group of $A_*$ with respect to
 $\rho$} is defined as
$$ \widehat{H}_{n}(A_*,\rho) :=\{ (a,b)\in ZA_n\oplus \widetilde{B}_{n+1}\}/ \sim. $$
\end{dfn}
Observe that the group $\widehat{H}_{n}(A_*,\rho)$
can be rewritten as
$$  \widehat{H}_{n}(A_*,\rho) =\frac{\big\{ (a,b)\in ZA_n\oplus B_{n+1}\big\}}{ \big\{(0,d_Bb),(d_Aa,\rho(a)),
a\in A_{n+1},b\in B_{n+2} \big\}}. $$
The class of a pair $(a,b)$ in $\widehat{H}_{n}(A_*,\rho)$ is
denoted by $[(a,b)]$.

These modified homology groups can be seen as the homology groups
of the simple of $\rho$ truncated appropriately.
Let $\rho_{> n}$ be the composition of $\rho:A_*\rightarrow B_*$ with the
canonical morphism $B_*\rightarrow \sigma_{>n}B_*.$
Then, it follows from the definition that
$$H_r(s(\rho_{> n}))= \left\{ \begin{array}{ll} H_r(s(\rho)) & r>n, \\ \widehat{H}_{n}(A_*,\rho) & r=n, \\
H_r(A_*) & r<n.
\end{array}\right. $$
Observe that, for every $n$, there are well-defined
morphisms
$$
\begin{array}{rclcrcl}
\widetilde{B}_{n+1} & \xrightarrow{a} & \widehat{H}_{n}(A_*,\rho), & \quad &  b & \mapsto & [(0,-b)],\\
 \widehat{H}_{n}(A_*,\rho) & \xrightarrow{\zeta} & H_n(A_*), &\quad &
 [(a,b)]&
 \mapsto & [a], \\
 \widehat{H}_{n}(A_*,\rho) & \xrightarrow{\rho} & Z B_n & \quad &  [(a,b)] & \mapsto &
\rho(a)-d_B(b).
\end{array}
$$

The following proposition  is the homological analogue of Theorem
3.3 together with Proposition 3.9 of \cite{Takeda} and the dual of
Propositions 4.3 and 4.4 of \cite{Burgos2}.

\begin{prp}\label{arithlong}
\begin{enumerate}[(i)] \item Let $\rho:A_*\rightarrow B_*$ be a chain
morphism. Then, for every $n$, there are exact sequences
\begin{enumerate}[(a)]
\item $ 0\rightarrow H_n(s_*(\rho)) \rightarrow \widehat{H}_{n}(A_*,\rho) \xrightarrow{\rho}  Z B_n \rightarrow H_{n-1}(s_*(\rho)). $
\item $H_{n+1}(A_*) \xrightarrow{\rho} \widetilde{B}_{n+1}  \xrightarrow{a}
 \widehat{H}_{n}(A_*,\rho) \xrightarrow{\zeta}  H_n(A_*)
\rightarrow 0.$
\end{enumerate}
\item Assume that there is a commutative square of  chain
complexes
$$\xymatrix{
A_* \ar[r]^{\rho}\ar[d]_{f_1} & B_* \ar[d]^{f_2} \\ C_* \ar[r]_{\rho'} & D_*. }
$$
Then, for every $n$, there is an induced morphism
$$ \widehat{H}_{n}(A_*,\rho) \xrightarrow{f}  \widehat{H}_n(C_*,\rho') \qquad \ [(a,b)]  \mapsto
[(f_1(a),f_2(b))]. $$
\item If $f_1$ is a quasi-isomorphism and $f_2$ is an isomorphism, then $f$ is
an isomorphism.
\end{enumerate}
\end{prp}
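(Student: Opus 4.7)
The plan is to derive everything from the identification, already recorded in the paragraph preceding the proposition, that
$$\widehat{H}_n(A_*, \rho) \;\cong\; H_n(s(\rho_{>n})),$$
where $\rho_{>n}\colon A_* \to \sigma_{>n}B_*$ is $\rho$ composed with the projection $B_* \to \sigma_{>n}B_*$. Once this is in hand, sequence (b) drops out of the long exact sequence of a cone, sequence (a) reduces to comparing $s(\rho)$ with $s(\rho_{>n})$, and (ii)--(iii) are diagram chases.

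For (i)(b), I would apply the standard long exact sequence of the mapping cone to $\rho_{>n}$:
$$H_{n+1}(A_*) \to H_{n+1}(\sigma_{>n}B_*) \to H_n(s(\rho_{>n})) \to H_n(A_*) \to H_n(\sigma_{>n}B_*).$$
A one-line inspection of the truncated complex gives $H_{n+1}(\sigma_{>n}B_*) = B_{n+1}/\im d_B = \widetilde{B}_{n+1}$ and $H_n(\sigma_{>n}B_*) = 0$, producing the desired four-term sequence. Unpacking the definitions of the connecting morphisms then shows that the arrow $\widetilde{B}_{n+1} \to \widehat{H}_n$ agrees with $a$ (up to the sign $[(0,-b)]$) and that the arrow $\widehat{H}_n \to H_n(A_*)$ agrees with $\zeta$.

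For (i)(a), I would work directly with the morphism of simple complexes $s(\rho) \to s(\rho_{>n})$ induced by $B_* \to \sigma_{>n}B_*$. The map $H_n(s(\rho)) \to \widehat{H}_n(A_*, \rho)$ is injective because any equality $(a,b) = (d_A a', \rho(a') - d_B b')$ in $s(\rho_{>n})$ is already a boundary relation in $s(\rho)$. A class $[(a,b)]$ lifts to a class in $H_n(s(\rho))$ exactly when $\rho(a) - d_B b = 0$, giving exactness at $\widehat{H}_n$ against the map $\rho\colon \widehat{H}_n \to ZB_n$. For the connecting map, I would check that $b \in ZB_n$ yields the cycle $(0,b) \in s(\rho)_{n-1}$, and that $(0,b)$ bounds in $s(\rho)$ iff $b = \rho(a) - d_B c$ with $a \in ZA_n$, i.e.\ iff $b$ lies in the image of $\rho$ on $\widehat{H}_n$.

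Part (ii) is a direct verification: the commutativity of the square ensures that $(f_1 a, f_2 b)$ is still a cycle and that the equivalence $\sim$ is preserved, because $f_1$ maps boundaries to boundaries and $\rho' f_1 = f_2 \rho$. For (iii), I would apply the five lemma to the commutative ladder of sequences (b) attached to $(A_*,\rho)$ and $(C_*,\rho')$: the vertical maps at $H_{n+1}(A_*)$ and $H_n(A_*)$ are isomorphisms because $f_1$ is a quasi-isomorphism, and the map at $\widetilde{B}_{n+1}$ is an isomorphism because $f_2$ is, so $f\colon \widehat{H}_n(A_*,\rho) \to \widehat{H}_n(C_*,\rho')$ is forced to be an isomorphism. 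No substantive obstacle arises in this proof; the only item requiring care is the signs appearing in the definitions of $a$, $\zeta$, and $\rho$ on $\widehat{H}_n$, so that the connecting maps of the cone sequence are correctly identified with the maps stipulated in the definition of the modified homology.
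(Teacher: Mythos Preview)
Your proposal is correct and follows essentially the same route as the paper: both derive (i)(b) from the cone long exact sequence for $\rho_{>n}$ (equivalently, from the short exact sequence $0\to \sigma_{>n}B_*[-1]\to s_*(\rho_{>n})\to A_*\to 0$), and both obtain (i)(a) by comparing $s_*(\rho)$ with $s_*(\rho_{>n})$---the paper packages this as the short exact sequence $0\to (B_*/\sigma_{>n}B_*)[-1]\to s_*(\rho)\to s_*(\rho_{>n})\to 0$, while you unwind the same comparison element by element. Your five-lemma argument for (iii) is exactly what the paper leaves to the reader.
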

\begin{proof}
The exact sequences follow from the long exact sequences associated to the
following short exact sequences:
\begin{align*}
0\rightarrow B_*/\sigma_{>n}B_*[-1]  \rightarrow s_*(\rho)
\rightarrow s_*(\rho_{>n}) \rightarrow 0, &
\\
0\rightarrow \sigma_{>n}B_*[-1] \rightarrow s_*(\rho_{>n})
\rightarrow A_* \rightarrow 0. &
\end{align*}
The second and the third statements are left to the reader.
\end{proof}

\begin{crl} For every $n$, there is a canonical isomorphism
$$H_n(s_*(\rho)) \cong_{can} \ker(\widehat{H}_{n}(A_*,\rho) \xrightarrow{\rho} B_n). $$
\end{crl}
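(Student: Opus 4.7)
The plan is to deduce this directly from part (i)(a) of Proposition \ref{arithlong}, which provides the exact sequence
\[
0 \to H_n(s_*(\rho)) \to \widehat{H}_n(A_*,\rho) \xrightarrow{\rho} ZB_n \to H_{n-1}(s_*(\rho)).
\]
This already identifies $H_n(s_*(\rho))$ with the kernel of $\rho : \widehat{H}_n(A_*,\rho) \to ZB_n$, so the only thing that needs to be checked is that this kernel coincides with the kernel of the map $\rho : \widehat{H}_n(A_*,\rho) \to B_n$ stated in the corollary.

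To see this, I would note that for a representative $(a,b)$ of a class in $\widehat{H}_n(A_*,\rho)$, the element $\rho([(a,b)]) = \rho(a) - d_B(b)$ satisfies
\[
d_B\bigl(\rho(a) - d_B(b)\bigr) = \rho(d_A a) - d_B d_B(b) = 0,
\]
since $a \in ZA_n$ and $\rho$ is a chain morphism. Hence the image of $\rho : \widehat{H}_n(A_*,\rho) \to B_n$ is contained in $ZB_n$, so the inclusion $ZB_n \hookrightarrow B_n$ identifies the two kernels.

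Combining these two observations gives the canonical isomorphism
\[
H_n(s_*(\rho)) \cong \ker\bigl(\widehat{H}_n(A_*,\rho) \xrightarrow{\rho} B_n\bigr),
\]
and there is no substantive obstacle since all the work is carried by the preceding proposition; the corollary is essentially a reformulation made possible by the fact that $\rho$ lands in cycles.
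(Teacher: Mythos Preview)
Your proof is correct and follows exactly the approach the paper intends: the corollary is stated without proof in the paper (just a QED symbol), since it is immediate from the exact sequence in Proposition~\ref{arithlong}(i)(a). Your additional observation that $\rho$ lands in $ZB_n$ is a careful reconciliation of the corollary's target $B_n$ with the $ZB_n$ in the exact sequence, and is already implicit in the paper's definition of the map $\rho:\widehat{H}_n(A_*,\rho)\to ZB_n$.
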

\vist

\subsection{Takeda  arithmetic $K$-theory with rational coefficients}\label{rationaltakedakgroups}

We want to give a homological description of the rational Takeda
arithmetic $K$-groups. Since these groups fit in the exact
sequences
$$K_{n+1}(X)_{\Q} \xrightarrow{\ch}  \bigoplus_{p\geq 0} \widetilde{\mmD}^{2p-n-1}(X,p)
\xrightarrow{a} \widehat{K}^{T}_n(X)_{\Q} \xrightarrow{\zeta}
K_n(X)_{\Q} \rightarrow 0,$$ comparing it to \ref{arithlong}(i)(b), it is natural to expect that the
modified homology groups associated to the Beilinson regulator
``$\ch$'' give the desired description.

Therefore, consider the modified homology groups
$\widehat{H}_{n}(\wZ \widehat{C}_{*}(X),\hch)$ associated to the
chain map
$$\wZ \widehat{C}_{*}(X)  \xrightarrow{\ch}
 \bigoplus_{p\geq 0} \mmD^{2p-*}(X,p)$$
given in \eqref{ch3}. We want to see that there is an isomorphism
\begin{equation}
\widehat{K}_n^T(X)_{\Q} \cong \widehat{H}_{n}(\wZ
\widehat{C}_{*}(X),\ch)_{\Q}.\label{isocong}
\end{equation}

In order to prove this fact, considering the long exact sequences
associated to $\widehat{K}_n^T(X)_{\Q}$, to $\widehat{H}_{n}(\wZ
\widehat{C}_{*}(X),\ch)_{\Q}$ and the five lemma, it would be
desirable to have a factorization of the morphism ``$\ch$'' by
$\Cub$ in the form
{\small $$ \xymatrix{\Z \widehat{S}_*(X) \ar[dr] \ar[r]^{\Cub} & \wZ \widehat{C}_*(X) \ar[r]^(0.35){\ch} & \bigoplus_{p\geq 0}\mmD^{2p-*}(X,p)
\\  & C_{*}(|\widehat{S}_{\cdot}(X)|)[-1] \cong \widehat{S}_{*}(X)/\widehat{D}^s_*(X)[-1] \ar@{.>}[u]^{\Cub} \ar[ur]_{\ch}}$$}

Let $\mmP$ be a small exact category. If $\tau_i\in
\mathfrak{S}_n$ is the permutation that interchanges $i$ with $i+1$, then for
every $E\in S_{n}(\mmP)$ one has
\begin{eqnarray}
\Cub(s_0E)&=& s_1^1 \Cub(E), \nonumber \\ \Cub(s_nE) &=& s_n^0\Cub(E), \label{sifaces} \\
\Cub(s_iE)&=& \tau_i \Cub(s_iE), \qquad i=1,\dots,n-1. \nonumber
\end{eqnarray}
(See \cite[Lemma 4.1]{Takeda}).
It follows that the dotted arrow $\Cub$ of last diagram
$$C_{*}(|\widehat{S}_{\cdot}(X)|)[-1]\cong \Z \widehat{S}_{*}(X)/\widehat{D}^s_*(X)[-1] \xrightarrow{\Cub} \wZ \widehat{C}_*(X)$$
does not exist, since the image by $\Cub$ of a degenerate simplex
in $S_{n}(\mmP)$ is not necessarily a degenerate cube.

Therefore, in order to prove  \eqref{isocong}, we should find a
new complex, $\wZ \widehat{C}_{*}^s(X)$, quasi-isomorphic to the
complex of hermitian cubes, admitting a factorization of ``$\ch$''
of the form:
$$ \xymatrix{\Z \widehat{S}_*(X) \ar[d] \ar[r]^{\Cub} & \wZ \widehat{C}_*(X)\ar[d]_{\sim} \ar[r]^(0.4){\ch} & \bigoplus_{p\geq 0}\mmD^{2p-*}(X,p)
\\  C_{*}(|\widehat{S}_{\cdot}(X)|)[-1] \ar@{.>}[r]^(0.57){\Cub}  &  \wZ \widehat{C}_{*}^s(X) \ar[ur]_{\ch}}$$
In this way, we divide the proof in two steps: to prove that there
is an isomorphism $\widehat{H}_n( \wZ \widehat{C}_*(X),\ch) \cong
\widehat{H}_n( \wZ \widehat{C}_*^s(X),\ch)$, and then that $
\widehat{H}_n( \wZ \widehat{C}_*^s(X),\ch)_{\Q}\cong
\widehat{K}_n^T(X)_{\Q}$. This will be shown in Theorem
\ref{arithrational}, once this factorization of $\Cub$ is
obtained.

\medskip
\textbf{Factorization of $\Cub$. }
Takeda factors the morphism ``$\ch$'' through a quotient of the complex of cubes as follows.
Consider the complex of cubes $\widehat{T}_n(X)\subseteq \Z
\widehat{C}_{n}(X)$, generated by the  $n$-cubes $\overline{E}$
such that $\tau_i\overline{E}=\overline{E}$ for some index $i$. In
the proof of Theorem 4.4 in \cite{Takeda}, Takeda shows that if
$\overline{E}\in \widehat{T}_n(X)$, then $\ch(\overline{E})=0$.
Hence the morphism ``$\ch$'' is zero on the degenerate simplices
in $\Z \widehat{S}_*(X)$. It follows that $\ch$ factorizes as
{\small
$$C_{*}(|\widehat{S}_{\cdot}(X)|)[-1] \xrightarrow{\cong} \Z
\widehat{S}_{*}(X)/\widehat{D}^s_*(X)[-1] \xrightarrow{\Cub} \wZ
\widehat{C}_{*}(X) / \widehat{T}_*(X) \xrightarrow{\ch}
\bigoplus_{p\geq 0} \mmD^{2p-*}(X,p).
$$}

However, the complex $\wZ \widehat{C}_{*}(X) / \widehat{T}_*(X)$ is not
quasi-isomorphic to $\wZ \widehat{C}_{*}(X)$. Nevertheless, since
the complex $\Z \widehat{S}_{*}(X)/\widehat{D}^s_*(X)$ is quasi-isomorphic
to $\Z \widehat{S}_{*}(X)$ (due to the fact that the complex of degenerate simplices of a simplicial set is acyclic), it seems
reasonable to think that there exists a complex which is
quasi-isomorphic to $\wZ \widehat{C}_{*}(X)$ and which factors the morphism $\ch$ as
above. This is done in the sequel.

Let $\mmP$ be a small exact category. The smallest complex to consider is the following. For every $n$,
let
$$C^{deg}_{n}(\mmP):=\{ \Cub(s_iE),\ E\in S_{n}(\mmP),\ i\in \{1,\dots,n-1\}\}. $$
Let $\Z C^{deg}_{n}(\mmP)$ be the free abelian group on
$C^{deg}_{n}(\mmP)$ and let
$$\wZ C^{deg}_{n}(\mmP) := \frac{\Z C^{deg}_{n}(\mmP)+\Z D_n(\mmP)}{\Z D_n(\mmP)}. $$

\begin{lmm}\label{cubsdeg} Let $E\in S_{n}(\mmP)$.
\begin{enumerate}[(i)]
\item $d\Cub(s_iE) \in \Z C^{deg}_{n-1}(\mmP)+\Z D_{n-1}(\mmP)$, for all
$i=1,\dots,n-1$.
 \item For $i=1,\dots,n-1$, the equality  $$ d\Cub(s_iE)=
\sum_{j=0}^{i-1}(-1)^{j+1}\Cub(s_{i-1}
\partial_jE) + \sum_{j=i+1}^n(-1)^{j} \Cub(s_i\partial_jE)$$
holds in  $\wZ C^{deg}_{n-1}(\mmP)$.
\end{enumerate}
\end{lmm}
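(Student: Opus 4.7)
The plan is to compute $d\Cub(s_iE)$ directly by applying the face formulas \eqref{facescubes} together with the standard simplicial identities $\partial_k s_i = s_{i-1}\partial_k$ for $k<i$, $\partial_k s_i = \mathrm{id}$ for $k\in\{i,i+1\}$, and $\partial_k s_i = s_i\partial_{k-1}$ for $k>i+1$. Since $s_iE\in S_{n+1}(\mmP)$, the cube $\Cub(s_iE)$ has dimension $n$, and its boundary is $d=\sum_{k=1}^{n}(-1)^k(\partial_k^0-\partial_k^1+\partial_k^2)$. The strategy is to show, face by face, that every summand is either a degenerate cube (hence in $\Z D_{n-1}(\mmP)$), or of the form $\Cub(s_j E')$ (hence in $\Z C^{deg}_{n-1}(\mmP)$), or equals $\pm\Cub(E)$; these last two contributions will be shown to cancel exactly.

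For the first-type faces, \eqref{facescubes} yields $\partial_k^0\Cub(s_iE)=s_{n-1}^0\cdots s_k^0\,\Cub(\partial_{k+1}\cdots\partial_{n+1}s_iE)$. Whenever $k<n$ the outer composition contains at least one cube-degeneracy, so the term lies in $\Z D_{n-1}(\mmP)$. For $k=n$ the composition of cube-degeneracies is empty, and since $i\leq n-1<n$ the simplicial identity gives $\partial_{n+1}s_iE=s_i\partial_nE$, producing $\Cub(s_i\partial_nE)\in \Z C^{deg}_{n-1}(\mmP)$. Dually, $\partial_k^2\Cub(s_iE)$ lies in $\Z D_{n-1}(\mmP)$ for $k>1$, while for $k=1$ it equals $\Cub(s_{i-1}\partial_0E)\in \Z C^{deg}_{n-1}(\mmP)$. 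For the middle faces, $\partial_k^1\Cub(s_iE)=\Cub(\partial_ks_iE)$ equals $\Cub(s_{i-1}\partial_kE)$ for $k<i$, equals $\Cub(E)$ for both $k=i$ and $k=i+1$, and equals $\Cub(s_i\partial_{k-1}E)$ for $k>i+1$. The two copies of $\Cub(E)$ appear in $d$ with coefficients $-(-1)^i$ and $-(-1)^{i+1}$, which sum to zero and cancel exactly; this establishes (i).

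For (ii) we work modulo $\Z D_{n-1}(\mmP)$, where every term with a cube-degeneracy on the outside vanishes. Collecting the surviving contributions, reindexing $j=k-1$ in $\sum_{k=i+2}^{n}(-1)^{k+1}\Cub(s_i\partial_{k-1}E)$ to $\sum_{j=i+1}^{n-1}(-1)^{j}\Cub(s_i\partial_jE)$, absorbing $(-1)^n\Cub(s_i\partial_nE)$ coming from $\partial_n^0$ as the $j=n$ term of this sum, and absorbing $-\Cub(s_{i-1}\partial_0E)=(-1)^{0+1}\Cub(s_{i-1}\partial_0E)$ coming from $\partial_1^2$ as the $j=0$ term of the other sum, produces precisely the claimed identity in $\wZ C^{deg}_{n-1}(\mmP)$.

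The main obstacle is purely combinatorial: one must carefully track the alternating signs in $d$, correctly apply the three cases of the simplicial identity for $\partial_k s_i$, and verify that the two middle copies of $\Cub(E)$ cancel. No further input is needed beyond \eqref{facescubes} and the definition of $\wZ C^{deg}_{*}(\mmP)$.
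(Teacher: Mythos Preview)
Your argument is correct and follows essentially the same route as the paper: a direct computation of each $\partial_k^l\Cub(s_iE)$ via \eqref{facescubes} and the simplicial identities, after which everything is either a degenerate cube, an element of $\Z C^{deg}_{n-1}(\mmP)$, or cancels. The only organizational difference is how the indices $k=i,i+1$ are handled: the paper invokes the relation $\Cub(s_iE)=\tau_i\Cub(s_iE)$ from \eqref{sifaces} together with $\partial_i^l\tau_i=\partial_{i+1}^l$ to cancel all six faces at $k=i,i+1$ simultaneously, whereas you treat the $l=0,2$ faces at those indices as degenerate (which they are for $1<k<n$) and cancel only the two copies of $\Cub(E)$ coming from $l=1$ using $\partial_is_i=\partial_{i+1}s_i=\mathrm{id}$. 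One small remark: your claim that $\Cub(s_i\partial_nE)\in\Z C^{deg}_{n-1}(\mmP)$ (and dually $\Cub(s_{i-1}\partial_0E)\in\Z C^{deg}_{n-1}(\mmP)$) is not literally true in the boundary cases $i=n-1$ (resp.\ $i=1$), since then the degeneracy index falls outside the range $\{1,\dots,n-2\}$; but in those cases \eqref{sifaces} shows the term is actually in $\Z D_{n-1}(\mmP)$, so (i) still holds and the term vanishes in $\wZ C^{deg}_{n-1}(\mmP)$ as needed for (ii).
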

\begin{proof}
By definition,
$$d\Cub(s_iE) = \sum_{j=1}^{n}\sum_{l=0}^2(-1)^{j+l}\partial_j^l\Cub(s_{i} E). $$
Since $\partial_i^l\tau_i=\partial_{i+1}^l$ for all $l=0,1,2$, by
\eqref{sifaces} we have
$$\partial_i^l\Cub(s_iE)=\partial_{i+1}^l\Cub(s_iE).$$ Hence these
two terms cancel each other in the previous sum. So, assume
that $j\neq i,i+1$. If $l=1$, then, by \eqref{facescubes},
$$\partial_j^1\Cub(s_iE)= \Cub(\partial_js_iE)=\left\{\begin{array}{ll}
\Cub(s_{i-1}\partial_jE) & j<i, \\ \Cub(s_i\partial_{j-1}E) & j>i+1.
\end{array} \right.  $$
If $l=0$ and $j\neq n$, or $l=2$ and $j\neq 1$,
$\partial_j^l\Cub(s_iE)$ is a degenerate cube and hence it is zero
in the group $\wZ C^{deg}_{n}(\mmP)$. Finally, we have
\begin{eqnarray*}
\partial_n^0 \Cub(s_iE) &=& \Cub(s_{i}\partial_nE), \\ \partial_1^2\Cub(s_iE)
&=& \Cub(s_{i-1}\partial_0E). \end{eqnarray*} The statements of
the lemma follow from these calculations and the relations \eqref{sifaces}.
\end{proof}

It follows from the last lemma that $\wZ C^{deg}_{*}(\mmP)$ is
a chain complex with the differential induced
by the differential of $\wZ C_*(\mmP)$.

\begin{prp}
The complex $\wZ C^{deg}_{*}(\mmP)$ is
quasi-isomorphic to zero.
\end{prp}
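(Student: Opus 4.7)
The plan is to show $\wZ C^{deg}_*(\mmP)$ is chain contractible by exhibiting an explicit contracting homotopy. On a generator $\Cub(s_iE)$ with $E\in S_n(\mmP)$ and $1\le i\le n-1$, I propose
$$h(\Cub(s_iE)) \;=\; (-1)^{i+1}\,\Cub(s_{i+1}s_iE).$$
Since $i+1\in\{2,\dots,n\}$ is an interior degeneracy index for the simplex $s_iE\in S_{n+1}(\mmP)$, this target sits in $\wZ C^{deg}_{n+1}(\mmP)$, so $h$ does map into the right place.

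Verifying $dh+hd = \mathrm{id}$ on generators reduces to applying Lemma~\ref{cubsdeg}(ii) twice and matching terms. Expanding $d\Cub(s_{i+1}s_iE)$ with outer degeneracy $i+1$ acting on $s_iE\in S_{n+1}(\mmP)$, and using the simplicial identities $\partial_js_i = s_{i-1}\partial_j$ for $j<i$, $\partial_is_i = \mathrm{id}$, and $\partial_ks_i = s_i\partial_{k-1}$ for $k>i+1$, the summand coming from $\partial_is_iE = E$ yields $(-1)^{i+1}\cdot(-1)^{i+1}\Cub(s_iE) = \Cub(s_iE)$, while the remaining summands split into two families of the forms $\Cub(s_is_{i-1}\partial_jE)$ (for $0\le j<i$) and $\Cub(s_{i+1}s_i\partial_{k-1}E)$ (for $i+2\le k\le n+1$). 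Expanding $d\Cub(s_iE)$ directly by Lemma~\ref{cubsdeg}(ii) and applying $h$ termwise produces exactly the same two families, but with opposite signs after the reindexing $k\mapsto k-1$. Everything cancels except the identity term. Boundary cases $i=1$ or $i=n-1$ work out because $\Cub(s_0F) = s_1^1\Cub F$ and $\Cub(s_mF) = s_m^0\Cub F$ are degenerate cubes, hence vanish in $\wZ$.

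The main obstacle is the well-definedness of $h$ on the quotient $\wZ C^{deg}_n(\mmP) = (\Z C^{deg}_n + \Z D_n)/\Z D_n$: distinct pairs $(i,E)$ can yield the same underlying cube (for instance $s_1s_1E = s_2s_1E$ as simplices, so $\Cub(s_1s_1E) = \Cub(s_2s_1E)$), and the formula above depends on the presentation $(i,E)$. One resolves this by first lifting the construction to the acyclic complex of degenerate simplices $\Z\widehat{D}^s_{*+1}(\mmP)$ via $s_iE\mapsto (-1)^{i+1}s_{i+1}s_iE$, verifying the analogous chain-homotopy identity there by the same computation using the simplicial relations, and then descending along the surjection $\Cub : \Z\widehat{D}^s_{*+1}(\mmP)\twoheadrightarrow \wZ C^{deg}_*(\mmP)$ afforded by Lemma~\ref{cubsdeg}; the two extremal identities $\Cub(s_0F)=s_1^1\Cub F$ and $\Cub(s_mF)=s_m^0\Cub F$ ensure that the presentation-ambiguity is absorbed into $\Z D_{n+1}$. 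Once this descent is in place, $h$ is a genuine chain contraction and $\wZ C^{deg}_*(\mmP)$ is quasi-isomorphic to zero.
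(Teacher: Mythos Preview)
Your homotopy formula $h(\Cub(s_iE)) = (-1)^{i+1}\Cub(s_{i+1}s_iE)$ and the verification of $dh+hd=\mathrm{id}$ on individual generators are exactly the computation the paper carries out (note $s_{i+1}s_i=s_is_i$, so your formula and the paper's coincide). The structural difference is that the paper does not attempt a single global contracting homotopy. Instead it filters $\wZ C^{deg}_*(\mmP)$ by subcomplexes $C^j_*$ generated by $\{\Cub(s_lE):l\le j\}$ and runs this computation on each successive quotient $C^i_*/C^{i-1}_*$, where the degeneracy index is pinned to the single value $i$; acyclicity of $\wZ C^{deg}_*$ then follows by induction along the filtration.

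The well-definedness problem you flag is genuine, and your proposed fix does not close it. The lifted formula $s_iE\mapsto(-1)^{i+1}s_{i+1}s_iE$ on $\Z\widehat{D}^s_*$ has exactly the same presentation-dependence: for $n\ge 3$ and $F\in S_{n-1}(\mmP)$ the degenerate simplex $s_1s_1F=s_2s_1F$ admits the presentations $(i,E)=(1,s_1F)$ and $(2,s_1F)$, and your rule sends these to $+\,s_2s_1s_1F=+\,s_3s_2s_1F$ and $-\,s_3s_2s_1F$ respectively, so the map is not well-defined even before applying $\Cub$. Nor does knowing that $\Z\widehat{D}^s_*$ is acyclic let you descend: a surjection from an acyclic complex can have non-acyclic target unless you control the kernel, which you have not done. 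In fact the classical proof that the degenerate-simplex complex is acyclic itself proceeds by a filtration on the degeneracy index, so lifting buys nothing here. The filtration is the missing ingredient; once you introduce it, your argument becomes the paper's.
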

\begin{proof}
We prove the proposition by constructing a chain of chain complexes
\begin{equation}\label{chainci}
0=C^0_*\subset C^1_* \subset \cdots \subset C^{n-2}_* \subset C^{n-1}_*=\wZ C^{deg}_{*}(\mmP),
\end{equation}
such that   all the quotients $C^i_*/C^{i-1}_*$
are homotopically trivial, that is, there exists
a homotopy
$$h_n: C^i_n/C^{i-1}_n \rightarrow C^i_{n+1}/C^{i-1}_{n+1}$$
such that
$$dh_n+h_{n-1}d=id.$$
It means in particular that for every $i$, the complex
$C^i_*/C^{i-1}_*$ is quasi-isomorphic to zero. Then, since
$C_*^0=0$, it follows inductively that $C_*^i$ is quasi-isomorphic
to zero for all $i$ and the proposition is proved.

For every $i=1,\dots,n-1$, let
$$\Z C^{deg,i}_{n}(\mmP)=\{\Cub(s_jE),\ E\in S_{n}(\mmP),\ j\in \{1,\dots,i\} \},$$
and let
$$C^i_n= \frac{\Z C^{deg,i}_{n}(\mmP)+\Z D_n(\mmP)}{\Z D_n(\mmP)}. $$
By Lemma \ref{cubsdeg}, (ii), $C^i_*$ are chain
complexes with the differential induced by
the differential of $\wZ C_*(\mmP)$. Moreover, for every $i$ there is an inclusion of complexes
$C_*^i\subseteq C_*^{i+1}$.

Fix $E\in S_{n}(\mmP)$ and an index $i$. Consider an element
$\Cub(s_iE)\in C^i_*/C^{i-1}_*$ and define
$$h_n(\Cub(s_iE)) = (-1)^{i+1} \Cub(s_is_iE).$$

Then, by Lemma \ref{cubsdeg}, in the complex $C^i_*/C^{i-1}_*$,
\begin{eqnarray*}
d\Cub(s_iE) &=& \sum_{j=i+1}^{n+1}(-1)^{j} \Cub(s_i\partial_j E)
\end{eqnarray*}
and
\begin{eqnarray*}
dh_n(\Cub(s_iE)) &=& \sum_{j=i+1}^{n+1}(-1)^{i+j+1} \Cub(s_i\partial_j s_iE)
\\ &=& \Cub(s_iE)+
 \sum_{j=i+2}^{n+1}(-1)^{i+j+1} \Cub(s_is_i\partial_{j-1}E) \\
 &=& \Cub(s_iE) + h_{n-1}(d\Cub(s_iE)).
\end{eqnarray*}
Therefore, we have proved that $C^i_*/C^{i-1}_*$ is homotopically trivial.
\end{proof}

Let
$$\wZ C_{*}^s(\mmP):= \frac{\Z C_{*}(\mmP)}{\Z D_*(\mmP)+\Z C^{deg}_{*}(\mmP)}.$$

\begin{crl}\label{modifcubes}
The natural chain morphism
$$\wZ C_{*}(\mmP) \rightarrow \wZ C_{*}^s(\mmP) $$
is a quasi-isomorphism.
\end{crl}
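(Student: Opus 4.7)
The plan is to read off the corollary from the previous proposition via a short exact sequence of complexes. By the very definitions
\[
\wZ C_*(\mmP) = \Z C_*(\mmP)/\Z D_*(\mmP), \qquad \wZ C_*^s(\mmP) = \Z C_*(\mmP)/\bigl(\Z D_*(\mmP) + \Z C^{deg}_*(\mmP)\bigr),
\]
the natural projection $\wZ C_*(\mmP) \to \wZ C_*^s(\mmP)$ is surjective in each degree, and its kernel is exactly
\[
\frac{\Z D_*(\mmP) + \Z C^{deg}_*(\mmP)}{\Z D_*(\mmP)} \;=\; \wZ C^{deg}_*(\mmP).
\]
So first I would exhibit the short exact sequence of chain complexes
\[
0 \longrightarrow \wZ C^{deg}_*(\mmP) \longrightarrow \wZ C_*(\mmP) \longrightarrow \wZ C_*^s(\mmP) \longrightarrow 0,
\]
checking (this is routine from Lemma 1.\!arabic{lmm} that $\wZ C^{deg}_*(\mmP)$ is closed under the differential induced by $d$ on $\wZ C_*(\mmP)$) that the inclusion and projection are chain morphisms.

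Next I would pass to the associated long exact sequence in homology. By the preceding proposition, the complex $\wZ C^{deg}_*(\mmP)$ is acyclic (indeed the chain of subcomplexes $C_*^0 \subset \cdots \subset C_*^{n-1} = \wZ C^{deg}_*(\mmP)$ was built precisely to show that all its homology vanishes). Hence $H_n(\wZ C^{deg}_*(\mmP)) = 0$ for every $n$, and the long exact sequence collapses to isomorphisms $H_n(\wZ C_*(\mmP)) \xrightarrow{\cong} H_n(\wZ C_*^s(\mmP))$ for every $n$, which is exactly the statement that the natural projection is a quasi-isomorphism.

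There is no real obstacle here; the only mild point to verify is that $\wZ C^{deg}_*(\mmP)$ is genuinely a subcomplex of $\wZ C_*(\mmP)$, but this is precisely part (i) of Lemma 1.\!arabic{lmm}, which guarantees that $d\Cub(s_i E)$ lies in $\Z C^{deg}_{n-1}(\mmP) + \Z D_{n-1}(\mmP)$. The corollary then follows at once by combining this observation with the acyclicity result just established.
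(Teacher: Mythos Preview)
Your proposal is correct and is precisely the argument the paper has in mind: the corollary is stated without proof immediately after the proposition showing $\wZ C^{deg}_{*}(\mmP)$ is acyclic, and your short exact sequence plus long exact sequence in homology is the intended (and only natural) way to deduce it. The one cosmetic issue is your placeholder reference ``Lemma 1.\!arabic\{lmm\}''; in the paper this is Lemma~\ref{cubsdeg}.
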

\vist

If   $\mmP=\widehat{\mmP}(X)$, we simply write $\wZ
\widehat{C}_{*}^s(X):=\wZ C_{*}^s(\widehat{\mmP}(X))$ and
 $\wZ \widehat{C}^{deg}_{*}(X):=\wZ \widehat{C}^{deg}_{*}(\widehat{\mmP}(X))$. Since ``$\ch$''
is zero on $\Z \widehat{D}_*(X)+\Z \widehat{C}^{deg}_{*}(X)$, we
have obtained the following corollary.

\begin{crl}\label{modifcubes2} Let $X$ be a proper arithmetic variety over $\Z$.
\begin{enumerate}[(i)]
\item The map $\ch$ admits a factorization as
$$C_{*}(|\widehat{S}_{\cdot}(X)|)[-1]
 \xrightarrow{\Cub} \wZ \widehat{C}_{*}^s(X) \xrightarrow{\ch} \bigoplus_{p\geq 0}\mmD^{2p-*}(X,p).$$
\item The natural morphism
$$ \wZ \widehat{C}_{*}(X) \xrightarrow{\sim} \wZ \widehat{C}_{*}^s(X)$$
is a quasi-isomorphism.
\end{enumerate}
\end{crl}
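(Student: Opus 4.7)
My plan is to derive both statements as essentially formal consequences of the preceding proposition (that $\wZ C^{deg}_{*}(\mmP)$ is acyclic), combined with Takeda's vanishing of ``$\ch$'' on $\tau_i$-symmetric cubes.

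For part (ii), I would start by identifying the kernel of the natural surjection explicitly. By definition $\wZ \widehat{C}_{*}^s(X) = \Z \widehat{C}_{*}(X)/(\Z \widehat{D}_{*}(X) + \Z \widehat{C}^{deg}_{*}(X))$, while $\wZ \widehat{C}_{*}(X) = \Z \widehat{C}_{*}(X)/\Z \widehat{D}_{*}(X)$. A second-isomorphism-theorem calculation shows the kernel equals $(\Z \widehat{D}_{*}(X) + \Z \widehat{C}^{deg}_{*}(X))/\Z \widehat{D}_{*}(X)$, which by definition is precisely $\wZ \widehat{C}^{deg}_{*}(X)$. This gives a short exact sequence of chain complexes
$$0 \to \wZ \widehat{C}^{deg}_{*}(X) \to \wZ \widehat{C}_{*}(X) \to \wZ \widehat{C}_{*}^s(X) \to 0,$$
and the preceding proposition (acyclicity of $\wZ \widehat{C}^{deg}_{*}(X)$) together with the associated long exact sequence in homology immediately yields that the right-hand map is a quasi-isomorphism.

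For part (i), I need to verify two factorizations. First, to descend $\Cub$ to the quotient $C_{*}(|\widehat{S}_{\cdot}(X)|)[-1] \cong \Z \widehat{S}_{*}(X)/\widehat{D}^s_{*}(X)[-1]$ with target $\wZ \widehat{C}_{*}^s(X)$, I check that the image under $\Cub$ of any degenerate simplex lies in $\Z \widehat{D}_{*}(X) + \Z \widehat{C}^{deg}_{*}(X)$. By the identities \eqref{sifaces}, $\Cub(s_0 E) = s_1^1 \Cub(E)$ and $\Cub(s_n E) = s_n^0 \Cub(E)$ are honest degenerate cubes and hence lie in $\Z \widehat{D}_{*}(X)$, while for $1 \leq i \leq n-1$ the element $\Cub(s_i E)$ lies in $\Z \widehat{C}^{deg}_{*}(X)$ by the very definition of that subcomplex. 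Second, to descend $\ch$ from $\wZ \widehat{C}_{*}(X)$ to $\wZ \widehat{C}_{*}^s(X)$ it suffices, since $\ch$ is already zero on $\Z \widehat{D}_{*}(X)$, to show $\ch$ vanishes on $\Z \widehat{C}^{deg}_{*}(X)$. The third identity of \eqref{sifaces} states $\Cub(s_i E) = \tau_i \Cub(s_i E)$ for $1 \leq i \leq n-1$, placing every generator of $\Z \widehat{C}^{deg}_{*}(X)$ inside Takeda's subgroup $\widehat{T}_{*}(X)$ on which he has shown (cited just before the statement) that $\ch$ vanishes.

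There is no real obstacle in this argument; the corollary is essentially a packaging of the preceding proposition with the cited vanishing result. The only point deserving care is the explicit identification of the kernel of $\wZ \widehat{C}_{*}(X) \to \wZ \widehat{C}_{*}^s(X)$ with $\wZ \widehat{C}^{deg}_{*}(X)$, since a priori one might worry that further elements of $\Z \widehat{C}^{deg}_{*}(X)$ lie in $\Z \widehat{D}_{*}(X)$ and are double-counted; the second isomorphism theorem resolves this cleanly.
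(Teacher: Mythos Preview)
Your proof is correct and follows essentially the same approach as the paper: part (ii) is the specialization to $\mmP=\widehat{\mmP}(X)$ of the preceding corollary (which the paper derives from the acyclicity of $\wZ C^{deg}_{*}(\mmP)$ via exactly the short exact sequence you wrote), and part (i) is deduced, as you do, from the identities \eqref{sifaces} together with Takeda's vanishing of $\ch$ on $\widehat{T}_*(X)$. The paper states this more tersely, simply noting that ``$\ch$'' is zero on $\Z \widehat{D}_*(X)+\Z \widehat{C}^{deg}_{*}(X)$ before recording the corollary, but your explicit verification is the intended argument.
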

\vist

At this point, we have all the ingredients to prove that there is
an isomorphism between $\widehat{K}^{T}_n(X)_{\Q}$ and $
\widehat{H}_{n}(\wZ \widehat{C}_{*}(X),\ch)_{\Q}.
$

For the proof of next theorem recall that the Hurewicz morphism
$$\pi_n(|\widehat{S}_{\cdot}(X)|) \rightarrow H_n(|\widehat{S}_{\cdot}(X)|) $$
maps the class of a pointed map $S^n \xrightarrow{f}
|\widehat{S}_{\cdot}(X)|$ to $f_*([S^n]).$

\begin{thm}\label{arithrational} Let $X$ be a proper arithmetic variety over $\Z$. Then,
for every $n\geq 0$, there is an isomorphism
$$\widehat{K}^{T}_n(X)_{\Q} \xrightarrow{\cong}  \widehat{H}_{n}(\wZ \widehat{C}_{*}(X),\ch)_{\Q}. $$
Moreover, there are commutative diagrams {\small
$$\begin{array}{cc}\xymatrix@C=15pt{ \widehat{K}^{T}_n(X)_{\Q}
\ar[r]^{\zeta} \ar[d]_{\cong} & K_n(X)_{\Q} \ar[d]^{\cong}
\\ \widehat{H}_{n}(\wZ \widehat{C}_{*}(X),\ch)_{\Q} \ar[r]^{\zeta} & H_n(\wZ
\widehat{C}_*(X),\Q) } & \xymatrix@C=15pt{
\widehat{K}^{T}_n(X)_{\Q} \ar[r]^(0.35){\ch} \ar[d]_{\cong} &
\bigoplus_{p\geq 0}\mmD^{2p-*}(X,p)
  \ar[d]^{=}
\\ \widehat{H}_{n}(\wZ \widehat{C}_{*}(X),\ch)_{\Q} \ar[r]^(0.45){\ch} & \bigoplus_{p\geq
0}\mmD^{2p-*}(X,p). }\end{array}$$}
\end{thm}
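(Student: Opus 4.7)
The strategy is to compare Takeda's four-term exact sequence for $\widehat{K}^T_n(X)_\Q$ with the exact sequence of Proposition \ref{arithlong}(i)(b) applied to the Beilinson regulator, and then invoke the five lemma after tensoring with $\Q$. A preliminary reduction: by Proposition \ref{arithlong}(iii) combined with Corollary \ref{modifcubes2}(ii), the modified homology groups $\widehat{H}_n(\wZ \widehat{C}_*(X),\ch)$ and $\widehat{H}_n(\wZ \widehat{C}_*^s(X),\ch)$ are canonically isomorphic, so it suffices to build a comparison isomorphism
$$\widehat{K}^T_n(X)_\Q \cong \widehat{H}_n(\wZ \widehat{C}_*^s(X),\ch)_\Q.$$
This reduction is precisely what allows us to exploit the factorization of $\ch$ through $\wZ \widehat{C}_*^s(X)$ provided by Corollary \ref{modifcubes2}(i).

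I would define the comparison map by
$$\Phi: \widehat{K}^T_n(X) \longrightarrow \widehat{H}_n(\wZ \widehat{C}_*^s(X),\ch), \qquad [(f,\omega)] \longmapsto [(\Cub(f_*([S^{n+1}])),\omega)],$$
where $f_*([S^{n+1}])$ is viewed in $C_{n+1}(|\widehat{S}_\cdot(X)|)$ and $\Cub$ is the chain morphism of Corollary \ref{modifcubes2}(i). Well-definedness is the technical crux. Given a cellular homotopy $h: S^{n+1} \times I/\{*\}\times I \to |\widehat{S}_\cdot(X)|$ witnessing $(f,\omega) \sim (f',\omega')$, the cellular boundary of the fundamental chain $[S^{n+1}\times I]$ reduces, after collapse of the $\{*\}\times I$ face to the basepoint, to $[S^{n+1}]\times\{0\} - [S^{n+1}]\times\{1\}$. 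Pushing forward gives $d\,h_*([S^{n+1}\times I]) = f_*([S^{n+1}]) - f'_*([S^{n+1}])$, and applying the chain morphism $\Cub$ together with Takeda's constraint $\omega' - \omega = (-1)^{n}\ch(h_*([S^{n+1}\times I]))$ produces, via $\ch = \ch\circ\Cub$, precisely the relation defining $\widehat{H}_n$, with witness $\Cub(h_*([S^{n+1}\times I])) \in \wZ \widehat{C}_{n+1}^s(X)$.

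I would then fit $\Phi$ into a morphism of four-term exact sequences: Takeda's sequence for $\widehat{K}^T_n(X)_\Q$ on top, and the sequence of Proposition \ref{arithlong}(i)(b) for the chain map $\ch: \wZ \widehat{C}_*^s(X) \to \bigoplus_p \mmD^{2p-*}(X,p)$ on the bottom. The outer vertical arrows $K_m(X)_\Q \xrightarrow{\cong} H_m(\wZ \widehat{C}_*^s(X))_\Q$ are the McCarthy isomorphism \eqref{mccarthyiso} composed with the quasi-isomorphism of Corollary \ref{modifcubes2}(ii); the vertical arrow on $\bigoplus_p \widetilde{\mmD}^{2p-n-1}(X,p)$ is the identity. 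Commutativity of each square is immediate from the explicit descriptions of $a$ and $\zeta$ on both sides, and the five lemma delivers the desired isomorphism rationally. The two displayed compatibility diagrams with $\zeta$ and $\ch$ then follow tautologically: under $\Phi$, the class of $f$ maps to the Hurewicz--$\Cub$ image of $[f]$, while Takeda's formula $\ch[(f,\omega)] = \ch(f_*([S^{n+1}])) + d_\mmD\omega$ matches the modified-homology formula for $\ch$.

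The main obstacle is precisely the well-definedness check for $\Phi$: one must translate Takeda's CW-theoretic homotopy equivalence, formulated via the fundamental chain of $S^{n+1}\times I$ and its cellular pushforward by $h$, into the chain-level equivalence of pairs defining $\widehat{H}_n$, keeping track of signs and of the collapse of the $\{*\}\times I$ face. Once this is established, the remainder is a routine five-lemma argument built on quasi-isomorphism results already proved in the excerpt.
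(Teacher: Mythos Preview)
Your approach is essentially identical to the paper's: reduce to $\wZ\widehat{C}^s_*(X)$ via Proposition \ref{arithlong}(iii) and Corollary \ref{modifcubes2}, define the comparison map using $\Cub$ on the Hurewicz image, verify well-definedness from a cellular homotopy, and conclude by the five lemma applied to the two four-term exact sequences.

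One precision issue: your map $\Phi$ should send $[(f,\omega)]$ to $[(\Cub f_*([S^{n+1}]),\,-\omega)]$, with a minus sign on the second entry. Recall that in the modified homology groups the map $a$ is defined by $b\mapsto [(0,-b)]$, whereas Takeda's $a$ is $\alpha\mapsto [(0,\alpha)]$; so the square involving $a$ commutes only with the sign change. Relatedly, the boundary computation for $h_*[S^{n+1}\times I]$ is taken in the shifted complex $C_*(|\widehat{S}_\cdot(X)|)[-1]$, which introduces a factor $(-1)^{n+1}$: the paper records $dh_*[S^{n+1}\times I]=(-1)^{n+1}(f'_*[S^{n+1}]-f_*[S^{n+1}])$ and then takes $\alpha=(-1)^{n+1}\Cub h_*[S^{n+1}\times I]$ as the witness. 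With these signs in place your well-definedness check and the commutativity with $\ch$ go through exactly as you describe.
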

\begin{proof} Consider the chain complex $\widetilde{\Z} \widehat{C}^s_*(X)=
\frac{\Z \widehat{C}_{*}(X)}{\Z \widehat{D}_*(X)+\Z
\widehat{C}^{deg}_{*}(X)}$, defined before Corollary
\ref{modifcubes2}. Let $\widehat{H}_{*}(\widetilde{\Z}
\widehat{C}^s_*(X),\ch)$ denote the modified homology groups with
respect to the morphism
$$\wZ \widehat{C}^s_{*}(X) \xrightarrow{\ch}
\bigoplus_{p\geq 0}\mmD^{2p-*}(X,p). $$ Consider the following
commutative diagram:
$$\xymatrix{
\widetilde{\Z} \widehat{C}_*(X) \ar[d]_{\sim} \ar[r]^(0.37){\ch} &  \bigoplus_{p\geq 0}\mmD^{2p-*}(X,p) \ar[d]^{=} \\
 \widetilde{\Z} \widehat{C}^s_*(X)  \ar[r]^(0.37){\ch} &  \bigoplus_{p\geq
 0}\mmD^{2p-*}(X,p).
}$$ By Lemma \ref{arithlong}, there is an induced isomorphism
$$ \widehat{H}_{n}(\wZ \widehat{C}_{*}(X),\ch) \xrightarrow{\pi}   \widehat{H}_{n}(\widetilde{\Z} \widehat{C}^s_*(X),\ch),$$
which commutes with $\zeta$.

It remains to prove that there is an isomorphism
$$\widehat{K}^{T}_n(X)_{\Q} \cong   \widehat{H}_{n}(\widetilde{\Z} \widehat{C}^s_*(X),\ch)_{\Q},$$
commuting with $\zeta$.

Consider the chain morphism
$$C_*(|\widehat{S}_{\cdot}(X)|)[-1] \xrightarrow{\Cub} \widetilde{\Z} \widehat{C}^s_*(X).$$

Recall from section \ref{higherK} that the isomorphism
$$ K_{n}(X)_{\Q}  \xrightarrow{\Cub}   H_n(\wZ \widehat{C}_*(X),\Q)$$
is given by the composition
{\small $$ K_{n}(X)=\pi_{n+1}(|\widehat{S}_{\cdot}(X)|)_{\Q} \xrightarrow{ \mathrm{Hurewicz}}
H_n(C_{*}(|\widehat{S}_{\cdot}(X)|)[-1])_{\Q} \xrightarrow{\Cub}
H_n(\wZ \widehat{C}_*(X),\Q)$$} which sends the class of a
cellular map $[f:S^{n+1}\rightarrow |\widehat{S}_{\cdot}(X)|]$ to
$\Cub f_*([S^{n+1}])$.

If $f,f':S^{n+1}\rightarrow |\widehat{S}_{\cdot}(X)|$ are
homotopic with cellular homotopy $h$, then
$$dh_*[S^{n+1}\times I]=(-1)^{n+1}(f'_*[S^{n+1}]-f_*[S^{n+1}])$$
in $C_{*}(|\widehat{S}_{\cdot}(X)|)[-1]$.

Let
\begin{eqnarray*}
 \widehat{K}^{T}_{n}(X)_{\Q} & \xrightarrow{ \Cub\nolimits^s} &  \widehat{H}_n(\wZ \widehat{C}_*^s(X),\ch )_{\Q}
 \\ \
 [(f:S^{n+1}\rightarrow |\widehat{S}_{\cdot}(X)|,\omega)] & \mapsto & [(\Cub
 f_*([S^{n+1}]),-\omega)].
\end{eqnarray*}
This morphism is well defined. Indeed, let $h$ be a cellular
homotopy between two pairs $(f,\omega)$ and $(f',\omega')$. Then,
if we denote $\alpha=(-1)^{n+1}\Cub h_*([S^{n+1}\times I])$, we
have
$$d (\alpha)= \Cub f'_*([S^{n+1}])- \Cub f_*([S^{n+1}]),$$
and
$$  \ch (\alpha)=\omega-\omega'.$$

Finally, consider the diagram {\small $$\xymatrix@C=13pt{
K_{n+1}(X)_{\Q} \ar[r]^(0.45){\ch} \ar[d]_{\cong} &
\wmD^{2p-n-1}(X,p)
 \ar[r]^(0.6){a} \ar[d]_{=} &
\widehat{K}^{T}_{n}(X)_{\Q} \ar[r]^{\zeta} \ar[d]_{ \Cub\nolimits^s} &  K_{n}(X)_{\Q} \ar[d]_{\cong} \ar[r] & 0    \\
H_{n+1}(\widetilde{\Z} \widehat{C}^s_*(X))_{\Q} \ar[r]^(0.50){\ch}
& \wmD^{2p-n-1}(X,p) \ar[r]^(0.45){a} &  \widehat{H}_{n}(\wZ
\widehat{C}^s_*(X),\ch )_{\Q} \ar[r]^(0.55){\zeta} &
H_{n}(\widetilde{\Z} \widehat{C}^s_*(X))_{\Q}  \ar[r] & 0 }
$$}
Since the rows are exact sequences, the statement of the
proposition  follows from the five lemma.
\end{proof}

\begin{crl}\label{normisohermi}
Let $X$ be a proper arithmetic variety over $\Z$. Then, for every
$n\geq 0$, there is an isomorphism
$$\widehat{K}^{T}_n(X)_{\Q} \xrightarrow{\cong}  \widehat{H}_{n}(N\widehat{C}_{*}(X),\ch)_{\Q}. $$
\end{crl}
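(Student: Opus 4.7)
The plan is to combine Theorem~\ref{arithrational} with the identification of the normalized complex $N\widehat{C}_*(X)$ with the quotient complex $\wZ\widehat{C}_*(X)$ recalled in the ``Normalized complexes'' paragraph of Section~\ref{higherK}. Specifically, that paragraph asserts that the composition
$$N\widehat{C}_*(X) \hookrightarrow \Z\widehat{C}_*(X) \twoheadrightarrow \wZ\widehat{C}_*(X)$$
is an isomorphism of chain complexes (so in particular a quasi-isomorphism), and the Remark after Theorem~1.3.1 observes that under this identification, the Beilinson regulator ``$\ch$'' on $N\widehat{C}_*(X)$ coincides with the one on $\wZ\widehat{C}_*(X)$.

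First I would write down the commutative diagram
$$\xymatrix{N\widehat{C}_*(X) \ar[r]^(0.35){\ch} \ar[d]_{\cong} & \bigoplus_{p\geq 0}\mmD^{2p-*}(X,p) \ar[d]^{=} \\ \wZ\widehat{C}_*(X) \ar[r]^(0.35){\ch} & \bigoplus_{p\geq 0}\mmD^{2p-*}(X,p)}$$
in which the left vertical arrow is the (iso)morphism of chain complexes just recalled and the right vertical arrow is the identity. Then I would invoke Proposition~\ref{arithlong}(iii): since the left vertical arrow is a quasi-isomorphism (even an isomorphism) and the right one is an isomorphism, the induced map on modified homology groups
$$\widehat{H}_n(N\widehat{C}_*(X),\ch) \xrightarrow{\cong} \widehat{H}_n(\wZ\widehat{C}_*(X),\ch)$$
is an isomorphism.

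Finally, tensoring with $\Q$ and composing with the isomorphism of Theorem~\ref{arithrational}, one obtains the desired isomorphism
$$\widehat{K}_n^T(X)_{\Q} \xrightarrow{\cong} \widehat{H}_n(\wZ\widehat{C}_*(X),\ch)_{\Q} \xrightarrow{\cong} \widehat{H}_n(N\widehat{C}_*(X),\ch)_{\Q}.$$

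There is essentially no obstacle here: the corollary is a purely formal combination of Theorem~\ref{arithrational} with the functoriality of modified homology groups in Proposition~\ref{arithlong}(iii). The only point worth checking carefully is that the chain map ``$\ch$'' defined on $\wZ\widehat{C}_*(X)$ does restrict to the chain map ``$\ch$'' on $N\widehat{C}_*(X)$ under the isomorphism of complexes, but this is precisely the content of the Remark following Theorem~1.3.1.
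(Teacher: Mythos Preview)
Your proposal is correct and matches the paper's implicit reasoning: the paper gives no explicit proof for this corollary (it is marked with a bare $\square$), treating it as an immediate consequence of Theorem~\ref{arithrational} together with the isomorphism $N\widehat{C}_*(X)\cong \wZ\widehat{C}_*(X)$ and Proposition~\ref{arithlong}(iii), exactly as you have written out.
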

\vist

\medskip
\textbf{Product structure on rational arithmetic $K$-theory.}
Takeda, in \cite{Takeda}, defines a product structure for
$\widehat{K}_n^T(X)$ compatible with the Loday product of
algebraic $K$-theory, and for which the morphism
$$\ch:
\widehat{K}_n^T(X) \rightarrow \bigoplus_{p\geq 0}\mmD^{2p-*}(X,p)$$ is a ring
morphism (see  loc. cit., Proposition 6.8). Since there is a
natural isomorphism
$$\widehat{K}_n(X)\cong \ker(\ch:
\widehat{K}_n^T(X)\rightarrow \bigoplus_{p\geq 0}\mmD^{2p-n}(X,p)),$$ there is an
induced Loday product on $\widehat{K}_n(X)$.

In algebraic $K$-theory, the Adams operations are derived from the
lambda operations by a polynomial relation. In order to do that,
the product structure for $\bigoplus_{n\geq 0}K_n(X)$ is the one
for which $\bigoplus_{n\geq 1}K_n(X)$ is a square zero ideal.

Therefore, we consider the product structure on $\bigoplus_{n\geq
0}\widehat{K}_n(X)$ for which $\bigoplus_{n\geq
1}\widehat{K}_n(X)$ is a square zero ideal and agrees with the
Loday product otherwise.

After tensoralizing with $\Q$, and using the description of
$\widehat{K}_n(X)_{\Q}$ via the isomorphism
$$\widehat{K}_n(X)_{\Q} \cong
H_n(s(\widehat{\ch}),\Q),$$ the product is defined as follows.

\begin{lmm}\label{productarithktheory}
Let $(\overline{E},\alpha)\in \widehat{K}_0(X)_{\Q}$ and let
$(\overline{F},\beta)\in \widehat{K}_n(X)_{\Q}$ with $n\geq 0$.
Then, by the product structure on $\widehat{K}_n(X)_{\Q}$ induced
by the product structure on $\widehat{K}_n^T(X)_{\Q}$, we have
$$(\overline{E},\alpha)\otimes (\overline{F},\beta)=
(\overline{E}\otimes \overline{F},\alpha \bullet
\ch(\overline{F})+\ch(\overline{E})\bullet \beta - \alpha\bullet
d_{\mmD}(\beta)) \in \widehat{K}_n(X)_{\Q}.$$
\end{lmm}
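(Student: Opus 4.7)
The plan is to compute Takeda's product on $\widehat{K}_n^T(X)_{\Q}$ via the modified homology description of Theorem \ref{arithrational}, then restrict to $\widehat{K}_n(X)_{\Q}$ using the inclusion $\widehat{K}_n(X) \cong \ker(\ch)$ of Lemma~2.7. Under the isomorphism $\widehat{K}_n^T(X)_{\Q} \cong \widehat{H}_n(\wZ \widehat{C}_*(X),\ch)_{\Q}$, a class is represented by a pair $(\overline{E},\alpha)$ with $\overline{E}$ a cycle in $\wZ\widehat{C}_n(X)$ and $\alpha \in \widetilde{\mmD}^{2p-n-1}(X,p)$, and the characteristic class (as in the definition preceding Proposition \ref{arithlong}) reads $[(\overline{E},\alpha)]\mapsto \ch(\overline{E}) - d_{\mmD}\alpha$.

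Takeda's product, built from the biexact tensor product on $\widehat{\mmP}(X)$ together with the Beilinson $\bullet$-product on the Deligne complex (\cite[Prop.~6.8]{Takeda}), transports through $\Cub$ and $\ch$ to yield, for $[(\overline{E},\alpha)]\in\widehat{H}_0$ and $[(\overline{F},\beta)]\in\widehat{H}_n$,
\[
[(\overline{E},\alpha)]\otimes [(\overline{F},\beta)] = \bigl[\bigl(\overline{E}\otimes\overline{F},\ \alpha\bullet\ch(\overline{F})+\ch(\overline{E})\bullet\beta - \alpha\bullet d_{\mmD}\beta\bigr)\bigr].
\]
Two checks confirm this. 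First, the second coordinate is well defined in $\widetilde{\mmD}^{2p-n-1}(X,p)$: replacing $\alpha$ by $\alpha+d_{\mmD}\alpha'$ or $\beta$ by $\beta+d_{\mmD}\beta'$ changes it by an exact form, as follows from the Leibniz rule for $d_{\mmD}$ on $\bullet$ together with $d_{\mmD}\ch(\overline{E}) = d_{\mmD}\ch(\overline{F}) = 0$. Second, $\ch$ respects the product: writing $\gamma$ for the second coordinate, a direct Leibniz computation gives
\[
d_{\mmD}\gamma = d_{\mmD}\alpha\bullet\ch(\overline{F}) + \ch(\overline{E})\bullet d_{\mmD}\beta - d_{\mmD}\alpha\bullet d_{\mmD}\beta,
\]
from which $\ch(\overline{E}\otimes\overline{F}) - d_{\mmD}\gamma = (\ch(\overline{E}) - d_{\mmD}\alpha)\bullet(\ch(\overline{F}) - d_{\mmD}\beta)$ follows immediately, so $\ch$ is a ring morphism for this product. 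Since $[(\overline{F},\beta)]\in\ker\ch=\widehat{K}_n(X)_{\Q}$, the product lies in $\widehat{K}_n(X)_{\Q}$, yielding the stated formula.

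The main obstacle is the translation in the second step, since Takeda's original construction proceeds via smash products of Waldhausen $S$-constructions and biexact bifunctors rather than via cubes and differential forms. The correction $-\alpha\bullet d_{\mmD}\beta$, invisible from a naive Leibniz guess, is precisely what the sign conventions of the modified homology characteristic class force. Alternatively, once one knows that the restriction to $\widehat{K}_0(X)_{\Q}$ coincides with the Gillet--Soul\'e product and that $\ch$ is a ring morphism, $\Q$-bilinearity together with the long exact sequence of Proposition \ref{arithlong} uniquely characterize the formula above, so any expression satisfying these properties must agree with Takeda's product.
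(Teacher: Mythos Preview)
The paper does not prove this lemma: it is stated and immediately followed by the end-of-proof marker, with no argument given. So there is no paper proof to compare your proposal against; effectively the author is recording a formula and leaving the verification to the reader (or to \cite{Takeda}).

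Your proposal is a reasonable outline, and the consistency checks you perform (well-definedness modulo $\im d_{\mmD}$, and the computation showing $\ch$ is multiplicative) are correct and useful. However, as you yourself flag, the central step is asserted rather than established: you write down the candidate formula and then verify it has good properties, but you do not actually carry out the translation of Takeda's product---defined via smash products of $|\widehat{S}_\cdot|$ and the biexact tensor functor---through the Hurewicz/$\Cub$ isomorphism of Theorem~\ref{arithrational} to the modified homology side. That translation is exactly the content of the lemma. Your fallback uniqueness argument is also not sufficient as stated: knowing that $\ch$ is a ring morphism and that the product restricts correctly to $\widehat{K}_0$ does not by itself pin down the product on $\widehat{K}_n^T(X)_{\Q}$, since two bilinear pairings can differ by something in the kernel of $\ch$ without violating either constraint. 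To close the gap you would need to unwind Takeda's explicit formula from \cite[\S6]{Takeda} (which already has the shape ``first component tensor, second component $\alpha\bullet\ch + \ch\bullet\beta$ plus correction'') and check it matches under the sign convention $(f,\omega)\mapsto(\Cub f_*,-\omega)$ of Theorem~\ref{arithrational}.
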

\vist

\begin{rem}
With the notation of the previous lemma, if $n>0$ then
$\overline{F}$ is an $n$-cube such that $d\overline{F}=0$ and
$\beta\in\bigoplus_{p\geq 0} \mmD^{2p-n-1}(X,p)$ is a differential
form such that $\ch(\overline{F})=d_{\mmD}(\beta)$. Hence,
$$\alpha \bullet \ch(\overline{F})=\alpha\bullet d_{\mmD}(\beta) $$
and therefore,
$$(\overline{E},\alpha)\otimes (\overline{F},\beta)=
(\overline{E}\otimes \overline{F},\ch(\overline{E})\bullet
\beta).$$
\end{rem}

\section{Adams operations on higher arithmetic $K$-theory}

In this section we construct the Adams operations on the higher
arithmetic $K$-groups tensored by the rational numbers. The
construction is adapted to both definitions of higher arithmetic
$K$-groups.

Let $X$ be a proper arithmetic variety over $\Z$.
In \cite{FeliuAdams}, we defined a chain morphism inducing Adams operations on
higher algebraic $K$-theory (with rational coefficients), using the chain complex of cubes. The results stated in \cite{FeliuAdams} for the category of locally free sheaves of finite rank over $X$, $\mmP(X)$, translate to the category $\widehat{\mmP}(X)$, considering the appropriate metrics.
As it will be overviewed next, the definition of the algebraic Adams operations involves the Koszul complex of a locally free sheaf and the isomorphism \eqref{koszulsum}. Therefore, the metrics of the Koszul complex are imposed by the need of the equivalent isometry \eqref{koszulisosum} to hold.

In order to define Adams operations on the  higher arithmetic $K$-groups tensored by $\Q$, it would be desirable to have the Chern character morphism commute with the Adams operations on the complex of cubes (diagram \eqref{adamsarith}). To this end, the Bott-Chern form of the Koszul complex should be zero. However, with the hermitian metric on the Koszul complex imposed by \eqref{koszulisosum}, its Bott-Chern form does not vanish.
This problem can be solved by introducing a slight modification to the definition of Adams operations of \cite{FeliuAdams} (see Remark \ref{modadams}).

We start this section with an overview of the Adams operations defined in \cite{FeliuAdams}, together with the required modification. In the next subsection we proceed to discuss the Bott-Chern form of the Koszul complex. We finish this section by showing the commutativity of diagram \eqref{adamsarith} and by deducing the Adams operations on higher arithmetic $K$-theory tensored by $\Q$.

\subsection{Adams operations on higher algebraic $K$-theory}
We recall here briefly the key points of the definition of Adams operations of \cite{FeliuAdams}, with a slight modification introduced.
In the following definitions, $\cong$ denotes an isometry.

\paragraph{The Koszul complex.}
For every locally free sheaf $E$ of finite rank on any variety, the $k$-th Koszul
complex of $E$ is the exact sequence
$$\Psi^k(E)^*: 0\rightarrow \Psi^k(E)^0 \xrightarrow{\varphi_0} \dots\xrightarrow{\varphi_{k-1}} \Psi^k(E)^k \rightarrow 0$$
with
$$\Psi^k(E)^p= E{\cdot}\stackrel{p}{\dots}{\cdot} E \otimes E\wedge \stackrel{k-p}{\dots}
\wedge E= S^pE \otimes \bigwedge\nolimits^{k-p}E.$$
The arrows $\varphi_p$ are defined as follows. Consider the inclusions
$$ S^pE  \xrightarrow{\iota_p}  T^pE,\quad\textrm{and }\quad \bigwedge\nolimits^pE  \xrightarrow{j_p}  T^pE
$$ defined locally by
\begin{eqnarray}
 \iota_p(x_{i_1}{\cdot} \dots {\cdot} x_{i_p})&=&  
   \sum_{\sigma\in
\mathfrak{S}_p} x_{\sigma(i_1)} \otimes \ldots \otimes x_{\sigma(i_p)},  \label{iota}\\
j_p(x_{i_1}\wedge \ldots \wedge x_{i_p}) & = & %
\sum_{\tau\in \mathfrak{S}_p}
(-1)^{|\tau|}\ x_{\tau(i_1)} \otimes \ldots \otimes x_{\tau(i_p)}. \nonumber
\end{eqnarray}
Consider the natural projections
$$\begin{array}{rclcrcl}
 T^pE &  \xrightarrow{\pi_p} & S^pE, &  \textrm{and}  &
 T^pE & \xrightarrow{\rho_p}&
\bigwedge\nolimits^pE
\\ x_{i_1}\otimes  \ldots \otimes x_{i_p} & \mapsto & x_{i_1}{\cdot} \ldots {\cdot}
x_{i_p}, & \    &
 x_{i_1}\otimes  \ldots \otimes x_{i_p} & \mapsto & x_{i_1}\wedge \ldots \wedge
x_{i_p}.
\end{array}$$
For every $p$, the  morphisms
$$\varphi_p: S^pE  \otimes \bigwedge\nolimits^{k-p}E\rightarrow
S^{p+1}E \otimes
\bigwedge\nolimits^{k-p-1}E $$ in the Koszul complex are
given as
\begin{equation}\label{varphi}\varphi_p = \frac{1}{p!(k-p-1)!} (\pi_{p+1}\otimes \rho_{k-p-1})\circ (\iota_p\otimes j_{k-p})\end{equation}
(see \eqref{varphidef} for the explicit computation of $\varphi_p$).

The key properties of the Koszul complex that make it suitable for the definition of Adams operations on higher algebraic $K$-theory are the following:
\begin{enumerate*}[$\blacktriangleright$]
\item If $E$ and $F$ are two locally free sheaves of finite rank, there is a canonical isomorphism of complexes
\begin{equation}\label{koszulsum}
\Psi^k(E\oplus F)^* \cong \bigoplus_{p=0}^k \Psi^p(E)^*\otimes \Psi^{k-p}(F)^*.\end{equation}
\item The secondary Euler characteristic class  of the Koszul complex
\begin{equation} \Psi^k(E)=  \sum_{p\geq 0}(-1)^{k-p+1} (k-p) \Psi^k(E)^p \end{equation}
agrees with the $k$-th Adams operation of $E$ in $K_0(X)$.
\end{enumerate*}

If $\overline{E}$ is a hermitian vector bundle, there is a naturally induced metric on the tensor product
$T^k\overline{E}=\overline{E}\otimes\stackrel{k}{\dots}\otimes
\overline{E}$. Endow $\bigwedge\nolimits^k\overline{E}$ with the wedge product metric, that is, the metric induced by the natural inclusion
of $\bigwedge\nolimits^k\overline{E}$ into $T^k\overline{E}$ given by $\frac{1}{\sqrt{k!}} j_k$. Analogously, we endow $S^k\overline{E}$ with the hermitian metric induced by the natural inclusion of $S^k\overline{E}$ into $T^k\overline{E}$ given by $\frac{1}{\sqrt{k!}} \iota_k$.

In this way, if $e_{1},\dots,e_{n}$ is a orthonormal local frame in $\overline{E}$, then, the set $\{e_{i_{1}}\otimes \ldots \otimes e_{i_{k}}\}_{i_{1},\dots,i_{k}\in \{1,\dots,n\}}$ forms a orthonormal basis of $T^k(\overline{E})$ and the set $\{e_{i_{1}}\wedge \ldots \wedge e_{i_{k}}\}_{i_{1}<\dots<i_{k}\in \{1,\dots,n\}}$ forms a orthonormal basis of $\bigwedge^k(\overline{E})$. The set $\{e_{i_{1}}\cdot \ldots \cdot e_{i_{k}}\}_{i_{1}\leq \dots \leq i_{k}\in \{1,\dots,n\}}$ forms a orthogonal basis of $S^k(\overline{E})$ with the norm of each element depending on the number of repetitions among the subindices. In particular, if $i_{1}<\dots<i_{k}$, then, the norm of $e_{i_{1}}\cdot \ldots \cdot e_{i_{k}}$ is one.

Denote these metrics by $h_{S^p(\overline{E})},h_{\bigwedge\nolimits^p(\overline{E})},h_{T^p(\overline{E})}$.
The locally free sheaves $\Psi^k(\overline{E})^p$ are endowed with the tensor product metric.
With these hermitian metrics,  if $\overline{E}$ and $\overline{F}$ are hermitian vector bundles, the algebraic canonical isomorphism of complexes \eqref{koszulsum} is an isometry of hermitian complexes
\begin{equation}\label{koszulisosum}\Psi^k(\overline{E}\oplus \overline{F})^* \cong \bigoplus_{p=0}^k \Psi^p(\overline{E})^*\otimes \Psi^{k-p}(\overline{F})^*.\end{equation}
Indeed, the natural inclusions
$$S^r\overline{E} \otimes \bigwedge\nolimits^{p-r} \overline{E} \otimes S^l\overline{F} \otimes \bigwedge\nolimits^{k-p-l} \overline{F} \hookrightarrow
S^{r+l}(\overline{E}\oplus \overline{F}) \otimes \bigwedge\nolimits^{k-r-l}(\overline{E}\oplus \overline{F})   $$
are compatible with the defined hermitian metrics.

\paragraph{Hermitian split cubes.}
\begin{dfn}[(cf. \cite{FeliuAdams})] Let $X$ be a proper arithmetic variety.
Let $\{ \overline{E}^{\bj}\}_{\bj\in \{0,2\}^n}$ be a collection of hermitian vector bundles over $X$, indexed by $\{0,2\}^n$. Let $[\overline{E}^{\bj}]_{\bj\in \{0,2\}^n}$ be the hermitian
$n$-cube defined  as follows.
\begin{enumerate*}[$\rhd$]
\item Let $\bj\in \{0,1,2\}^n$ and let $u_1<\dots < u_s$ be the indices with $j_{u_i}=1$. We denote $(v_1,\dots,v_n)=\sigma_{m_1,\dots,m_s}(\bj)$ to be the multi-index with
$$v_k =\left\{ \begin{array}{ll}
j_k & \textrm{ if }k\neq u_l,\textrm{ for all }l, \\
m_l & \textrm{ if }k=u_l. \end{array}\right.
$$
Then, the $\bj$-component of  $[\overline{E}^{\bj}]_{\bj\in \{0,2\}^n}$ is
$$\bigoplus_{(m_1,\dots,m_s)\in\{0,2\}^{s} }^{\bot} \overline{E}^{
\sigma_{m_1,\dots,m_s}(\bj)}. $$
\item
The morphisms are compositions of the following canonical morphisms:
$$\begin{array}{rclcrcl}
A \oplus^{\bot} B & \twoheadrightarrow & A, & \qquad &  A\oplus^{\bot} B & \xrightarrow{\cong} & B \oplus^{\bot} A,\\
A & \hookrightarrow &  A \oplus^{\bot} B,  & \qquad & A \oplus^{\bot} (B\oplus^{\bot} C) & \xrightarrow{\cong}
 & (A\oplus^{\bot} B) \oplus^{\bot} C.
\end{array}$$
\end{enumerate*}
 \nopagebreak[0]
 A hermitian $n$-cube of this form is called a \emph{direct sum hermitian $n$-cube}.
\end{dfn}

\begin{dfn}[(cf. \cite{FeliuAdams})]\label{splitcube} Let $X$ be a proper arithmetic variety.
\begin{enumerate*}[$\blacktriangleright$] \item Let $\overline{E}$ be a hermitian $n$-cube. The \emph{direct sum hermitian $n$-cube associated
to $\overline{E}$}, $\Sp(\overline{E})$, is the hermitian $n$-cube
$$\Sp(\overline{E}):=[\overline{E}^{\bj}]_{\bj\in \{0,2\}^n}.$$  \item A \emph{ hermitian split $n$-cube} is
a couple $(\overline{E},f)$, where $\overline{E}$ is a hermitian $n$-cube and $f:\Sp(\overline{E})\rightarrow \overline{E}$ is an
isometry of hermitian $n$-cubes such that $f^{\bj}=id$ if $\bj\in \{0,2\}^{n}$.
The morphism $f$ is called the \emph{splitting} of $(\overline{E},f)$.
\end{enumerate*}
\end{dfn}

Roughly speaking, these are the cubes which are orthogonal direct sums in all
directions. Let
$$\Z \widehat{\Spn}_n(X):= \Z \{\textrm{split } \textrm{hermitian n-cubes in } X \}$$ and let $\Z \widehat{\Spn}_*(X)=\bigoplus_n \Z \widehat{\Spn}_n(X)$.
As shown in \cite{FeliuAdams}, there is a differential map
 $$d :   \Z \widehat{\Spn}_n(X) \rightarrow \Z \widehat{\Spn}_{n-1}(X)$$
 making $(\Z \widehat{\Spn}_*(X),d)$ a chain complex and in such a way that the morphism that forgets the splitting
$\Z \widehat{\Spn}_*(X) \rightarrow \Z \widehat{C}_*(X)$  is a chain morphism.

\paragraph{Some intermediate chain morphisms.}
The construction of Adams operations factors through an intermediate complex. We recall here its construction due to the fact that a slight modification
needs to be introduced.

Let $k\geq 1$. For every $n\geq 0$ and $i=1,\dots,k-1$, we define
\begin{eqnarray*}
\widehat{G}^k_1(X)_n & := & \{\textrm{acyclic cochain
complexes of length }k\textrm{ of hermitian }n\textrm{-cubes} \}, \\
\widehat{G}^{i,k}_2(X)_n &:= &
\{\textrm{2-iterated acyclic cochain complexes of lengths }(k-i,i) \\ && \
\textrm{ of hermitian }n\textrm{-cubes}
 \}.
\end{eqnarray*}  The differential of $\Z \widehat{C}_*(X)$ induces a
differential on the graded abelian groups
$$\Z \widehat{G}^{i,k}_2(X)_*:=\bigoplus_n \Z \widehat{G}^{i,k}_2(X)_n\qquad  \textrm{and}\qquad \Z \widehat{G}^k_1(X)_*:=\bigoplus_n
\Z \widehat{G}^k_1(X)_n.$$

For every $n$, the simple complex associated to a $2$-iterated cochain complex induces a morphism
$$\Phi^i:\Z \widehat{G}^{i,k}_2(X)_n\rightarrow \Z \widehat{G}^k_1(X)_n. $$
A new chain complex is defined by setting
$$\Z \widehat{G}^{k}(X)_n := \bigoplus_{i=1}^{k-1}\Z \widehat{G}^{i,k}_2(X)_{n-1}  \oplus \Z \widehat{G}^k_1(X)_n. $$
If $\overline{B}_i\in \widehat{G}^{i,k}_2(X)_{n-1}$, for $i=1,\dots,k-1$, and $\overline{A}\in \widehat{G}^k_1(X)_n$, the differential is given by
$$d_s(\overline{B}_1,\dots,\overline{B}_{k-1},\overline{A}):= (-d\overline{B}_1,\dots,-d\overline{B}_{k-1}, \sum_{i=1}^{k-1}(-1)^{i}\Phi^i(\overline{B}_i)+d\overline{A}). $$

\begin{dfn}
For any acyclic cochain complex of hermitian $n$-cubes
$
\overline{A}: 0 \rightarrow \overline{A}^0 \xrightarrow{f^0} \cdots \xrightarrow{f^{j-1}} \overline{A}^j  \xrightarrow{f^j}
\cdots \xrightarrow{f^{k-1}} \overline{A}^k \rightarrow 0,
$
we define:
\begin{enumerate*}[$\rhd$]
\item  $\widehat{\varphi}_1(\overline{A})$ to be the \emph{secondary Euler characteristic class}, i.e.
$$\widehat{\varphi}_1(\overline{A}) = \sum_{p\geq 0}(-1)^{k-p+1} (k-p) \overline{A}^p \in \Z \widehat{C}_n(X).$$
\item  $\mu(\overline{A}):= \sum_{j\geq 0} (-1)^{j-1}\mu^j(\overline{A})$ where $\mu^j(\overline{A})$ is the hermitian $n$-cube defined by
$$\partial_1^0 (\mu^j(\overline{A}))=\ker f^j, \quad \partial_1^1(\mu^j(\overline{A}))=\overline{A}^j, \quad \textrm{and}\quad
\partial_1^2(\mu^j(\overline{A}))=\ker f^{j+1}.$$
\item $\lambda_k(\overline{A})$ is the  acyclic cochain complex of hermitian $n$-cubes
$$0 \rightarrow \overline{A}^0 \xrightarrow{\frac{1}{\sqrt{k}}f^0} \cdots \xrightarrow{\frac{1}{\sqrt{k}}f^{j-1}} \overline{A}^j  \xrightarrow{\frac{1}{\sqrt{k}}f^j}
\cdots \xrightarrow{\frac{1}{\sqrt{k}}f^{k-1}} \overline{A}^k \rightarrow 0.$$
\end{enumerate*}
\end{dfn}

If $\overline{B}_i \in \widehat{G}_2^{i,k}(X)_n$, then $\overline{B}_i$ is a 2-iterated acyclic cochain complex
where $\overline{B}_i^{j_1j_2}$ is a hermitian $n$-cube, for every $j_1,j_2$.  We attach to it a sum
of exact sequences of hermitian $n$-cubes as follows:
\begin{eqnarray*}
\widehat{\varphi}_2(\overline{B}_i) & = & \sum_{j\geq
0}(-1)^{k-j+1}((k-i-j)\lambda_{k-i}(\overline{B}_i^{*,j})+(i-j)\lambda_{i}(\overline{B}_i^{j,*}))
\\ &&+ \sum_{s\geq 1}(-1)^{k-s} (k-s) \sum_{j\geq 0} (\overline{B}_i^{s-j,j}\rightarrow
\bigoplus_{j'\geq j} \overline{B}_i^{s-j',j'}\rightarrow
\bigoplus_{j'>j}\overline{B}_i^{s-j',j'}).\end{eqnarray*} Roughly speaking, the first
summand corresponds to the secondary Euler characteristic of the rows and the
columns. The second summand appears as a correction factor for the fact that
direct sums are not sums in $\Z \widehat{C}_n(X)$.

\begin{lmm}
The morphism given for every $n$ by
\begin{eqnarray}
\Z \widehat{G}^k(X)_{n} & \xrightarrow{\varphi} & \Z \widehat{C}_n (X) \label{morphismvarphi} \\
(\overline{B}_1,\dots,\overline{B}_{k-1},\overline{A}) &\mapsto &
\widehat{\varphi}_1(\overline{A})+\sum_{i=1}^{k-1}(-1)^{i+1}\mu(\widehat{\varphi}_2(\overline{B}_i)).\nonumber
\end{eqnarray}
is a chain morphism.
\end{lmm}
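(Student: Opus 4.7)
The plan is to verify the chain map identity $d \circ \varphi = \varphi \circ d_s$ directly, exploiting the decomposition $\Z \widehat{G}^k(X)_n = \bigoplus_{i=1}^{k-1} \Z \widehat{G}^{i,k}_2(X)_{n-1} \oplus \Z \widehat{G}^k_1(X)_n$ and treating each summand separately. On the top piece $\Z \widehat{G}^k_1(X)_n$, the required identity $d\widehat{\varphi}_1(\overline{A}) = \widehat{\varphi}_1(d\overline{A})$ is immediate, since $\widehat{\varphi}_1$ is a fixed integer linear combination of the components $\overline{A}^p$ of the acyclic complex and commutes with the componentwise cube differential.

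On the $i$-th bottom piece $\Z \widehat{G}^{i,k}_2(X)_{n-1}$, writing out $d_s(\overline{B}_i) = (\dots, -d\overline{B}_i, \dots, (-1)^i \Phi^i(\overline{B}_i))$ and applying $\varphi$ to each component, the identity $d\varphi = \varphi d_s$ collapses, after cancelling the signs $(-1)^{i+1}$ coming from the formula $\varphi = (-1)^{i+1} \mu \circ \widehat{\varphi}_2$, to the key equation
\begin{equation*}
d\,\mu(\widehat{\varphi}_2(\overline{B}_i)) + \mu(\widehat{\varphi}_2(d\overline{B}_i)) = -\widehat{\varphi}_1(\Phi^i(\overline{B}_i)).
\end{equation*}
The right-hand side is, up to a sign, the secondary Euler characteristic of the simple complex $\Phi^i(\overline{B}_i)$, while the left-hand side measures the defect of $\mu \circ \widehat{\varphi}_2$ from being a chain morphism. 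I would expand $\widehat{\varphi}_2(\overline{B}_i)$ into its two constituents — the row/column contributions $(k-i-j)\lambda_{k-i}(\overline{B}_i^{*,j})$ and $(i-j)\lambda_i(\overline{B}_i^{j,*})$, and the correction term $\sum_{s,j}(\overline{B}_i^{s-j,j} \to \bigoplus_{j' \geq j}\overline{B}_i^{s-j',j'} \to \bigoplus_{j'>j}\overline{B}_i^{s-j',j'})$ — and then apply $d \mu$ and $\mu d$ using the definition of $\mu^j$ through kernels in the first cube direction. A term-by-term cancellation, together with the telescoping of the double sum, produces exactly $-\widehat{\varphi}_1(\Phi^i(\overline{B}_i))$; the correction term is tailored precisely to repair the non-additivity of orthogonal direct sums in $\Z \widehat{C}_n(X)$.

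The principal obstacle is the combinatorial bookkeeping in this cancellation. However, the author has already carried out exactly this identity at the level of (non-hermitian) cubes over $\mmP(X)$ in \cite{FeliuAdams}. The present lemma decorates every cube with a hermitian metric, and every construction entering the definition of $\varphi$ is metric-canonical: kernels of morphisms of hermitian vector bundles inherit a canonical metric (cf.\ \cite[Lemma 3.7]{Burgos1}), orthogonal direct sums carry the natural direct-sum metric, and the scalar multiplication by $1/\sqrt{k}$ appearing in $\lambda_k$ does not alter the underlying vector bundles, only rescaling the arrows. Consequently, each intermediate equality in the non-hermitian proof is in fact an equality of hermitian cubes with canonical kernels, and the chain morphism property lifts verbatim from \cite{FeliuAdams} to the present hermitian setting.
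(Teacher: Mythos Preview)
Your proposal is correct and follows essentially the same route as the paper. Both arguments defer the combinatorial verification to \cite{FeliuAdams}; the paper records this as three separate identities
\[
d\widehat{\varphi}_1(\overline{A})=\widehat{\varphi}_1(d\overline{A}),\qquad
-\widehat{\varphi}_2(d\overline{B}_i)=\sum_{l\geq 2,j}(-1)^{l+j}\partial_l^j\widehat{\varphi}_2(\overline{B}_i),\qquad
\widehat{\varphi}_1(\Phi^i(\overline{B}_i))=\sum_r(-1)^r\partial_1^r\widehat{\varphi}_2(\overline{B}_i),
\]
whereas you package the last two into the single equation $d\mu(\widehat{\varphi}_2(\overline{B}_i))+\mu(\widehat{\varphi}_2(d\overline{B}_i))=-\widehat{\varphi}_1(\Phi^i(\overline{B}_i))$ after splitting the cube differential on $\mu(\widehat{\varphi}_2(\overline{B}_i))$ into its first-direction and remaining faces. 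Your explicit remark that the hermitian decorations (induced kernel metrics, orthogonal sums, the $\lambda_k$ rescaling) are canonical and therefore do not disturb the identities from \cite{FeliuAdams} is a welcome addition that the paper leaves implicit.
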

\begin{proof} See \cite{FeliuAdams}. It follows essentially from the three equalities
\begin{eqnarray*}
d\widehat{\varphi}_1(\overline{A})&=& \widehat{\varphi}_1(d\overline{A}), \\
-\widehat{\varphi}_2(d\overline{B}_i) & = & \sum_{l=2}^n\sum_{j=0}^2 (-1)^{l+j} \partial_l^j
\widehat{\varphi}_2(\overline{B}_i),\quad \forall i, \\
 \widehat{\varphi}_1(\Phi^i(\overline{B}_i)) & =& \sum_{r\geq 0}(-1)^r \partial_1^r \widehat{\varphi}_2(\overline{B}_i)\quad \forall i.
\end{eqnarray*}
\end{proof}

\begin{rem}\label{modadams}
The definition given here for $\widehat{\varphi}_2$ is a slight modification of the definition given in the original paper  \cite{FeliuAdams}. It consists of  the twist by $1/\sqrt{k}$ of $\lambda_k$. The modification has been introduced in order to have a representative of the Adams operations on higher algebraic $K$-theory that commutes strictly with the Chern character (the reason will become apparent in the next section (cf. Proposition \ref{koszulvanish})). This modification does not alter the fact that the final morphism represents the Adams operations on higher rational algebraic $K$-theory, since the morphism at $n=0$ remains unchanged.
\end{rem}

\paragraph{Adams operations.}
In \cite{FeliuAdams}, Adams operations are constructed for every split (hermitian)
$n$-cube, using the secondary Euler characteristic class of the Koszul complex.
The following proposition follows from the construction of the Adams operations in \cite{FeliuAdams}.

\begin{prp}[(cf. \cite{FeliuAdams})]\label{adams2}  Let $X$ be a proper arithmetic variety. For every $k\geq 0$, there is a chain complex
$$\Psi^k : \Z \widehat{\Spn}_*(X) \rightarrow \wZ \widehat{C}_*(X).$$
For every $\overline{E} \in \Z \widehat{\Spn}_n(X)$, $\Psi^k(\overline{E})$ consists of a sum of hermitian
$n$-cubes of the following form:
\begin{enumerate}[(i)]
\item If $\overline{E}$ is a hermitian vector bundle (i.e. $n=0$), then
$\Psi^k( \overline{E})$
is the secondary Euler characteristic class of the Koszul complex.
\item If $n\geq 2$, the image of $\Psi^k$ consists of hermitian $n$-cubes which are split in at least one direction.
\item For $n= 1$, there are two types of summands. Some of the terms of $\Psi^k(\overline{E})$ are hermitian split $1$-cubes. The rest
are hermitian $1$-cubes of the form $\mu(\overline{A})$ with $\overline{A}= \lambda_l(\Psi^{l}(\overline{F})^*)\otimes \overline{G}$ or $\overline{A}= \overline{F}\otimes \lambda_l(\Psi^l(\overline{G})^*)$ for some hermitian bundles $\overline{F},\overline{G}$ in the entries of $\overline{E}$ and $0\leq l \leq k$. These terms arise as the image by $\mu\circ\widehat{\varphi}_2$ of $2$-iterated   acyclic cochain complexes of lengths $(k-i,i)$ of the form
$\Psi^{k-i}(\overline{G})^* \otimes \Psi^{i}(\overline{F})^*$.
\end{enumerate}
\end{prp}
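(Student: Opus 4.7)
The plan is to define $\Psi^k$ on a hermitian split $n$-cube $(\overline{E},f)$ in two steps: first produce an element of $\Z \widehat{G}^k(X)_n$ by a Koszul construction, then postcompose with the chain morphism $\varphi$ of \eqref{morphismvarphi} and the projection to $\wZ \widehat{C}_*(X)$. Since a split $n$-cube is determined by its $2^n$ corner bundles $\overline{E}^{\bj}$, $\bj\in \{0,2\}^n$, together with the orthogonal direct-sum identifications on all intermediate faces, I attach to each corner the Koszul complex $\Psi^k(\overline{E}^{\bj})^*$ and glue these across each direction by means of the isometric decomposition \eqref{koszulisosum}: along a direction carrying the orthogonal splitting $\overline{E}^{\bj_0} \oplus^{\bot} \overline{E}^{\bj_2}$, the Koszul complex of the intermediate vertex decomposes isometrically as $\bigoplus_{p=0}^k \Psi^p(\overline{E}^{\bj_0})^* \otimes \Psi^{k-p}(\overline{E}^{\bj_2})^*$. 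Organising these contributions by how many of the $n$ directions carry a \emph{mixed} factor (both Koszul factors in strictly intermediate degrees) assigns to $\overline{E}$ a well-defined element of $\Z \widehat{G}^k(X)_n$.

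Statements (i)--(iii) are then read off this assembly. For $n=0$ there are no directions to mix, so the construction collapses to the Koszul complex of a single hermitian vector bundle, placed in $\widehat{G}^k_1(X)_0$; on that summand $\varphi$ is $\widehat{\varphi}_1$, the secondary Euler characteristic class, giving (i). For $n \geq 2$, each direction consumed through $\mu \circ \widehat{\varphi}_2$ eliminates one cube direction, but the remaining $n-1 \geq 1$ directions retain their splittings, so the output is split in at least one direction, giving (ii). For $n=1$, either the sole direction is non-mixed and the output is a hermitian split $1$-cube, or it is mixed of lengths $(k-i,i)$ with $1 \leq i \leq k-1$: the resulting element of $\Z \widehat{G}^{i,k}_2(X)_0$ has the form $\Psi^{k-i}(\overline{G})^* \otimes \Psi^i(\overline{F})^*$, and after passing through $\mu \circ \widehat{\varphi}_2$ the two summands in the definition of $\widehat{\varphi}_2$ produce exactly the cubes $\mu(\overline{A})$ with $\overline{A} = \lambda_\ell(\Psi^\ell(\overline{F})^*) \otimes \overline{G}$ or $\overline{A} = \overline{F} \otimes \lambda_\ell(\Psi^\ell(\overline{G})^*)$ described in (iii).

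The main obstacle is verifying that the whole assembly defines an honest chain morphism. This reduces to checking that the face maps of the split cube correspond under Koszul to the face maps on $\Z \widehat{G}^k(X)_*$, which in turn boils down to the coherence of \eqref{koszulisosum} with passage to direct summands, together with the three compatibility identities for $\widehat{\varphi}_1$, $\widehat{\varphi}_2$, and $\Phi^i$ invoked in the lemma preceding the proposition. Once these are in place the argument of \cite{FeliuAdams} transfers almost verbatim, since $\widehat{\mmP}(X)$ only refines $\mmP(X)$ by hermitian data and all relevant morphisms are the same. Finally, the rescaling of $\lambda_k$ by $1/\sqrt{k}$ from Remark \ref{modadams} is symmetric in the two Koszul factors and affects neither the chain-morphism identity nor the structural statements (i)--(iii); it is introduced only so that the Bott--Chern form of the Koszul complex will vanish, as needed in the next section.
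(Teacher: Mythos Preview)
Your proposal is correct and follows the same approach as the paper, which simply cites the construction in \cite{FeliuAdams} without further detail (the paper gives no proof beyond the reference, and the subsequent remark confirms the same mechanism you describe: the cubes in the image arise from elements of $\widehat{G}_1^{k}(X)_n$ and $\widehat{G}_2^{i,k}(X)_{n-1}$ that are hermitian split in all directions, so after $\varphi$ at least one split direction survives when $n\geq 2$). Your sketch in fact unpacks more of the underlying construction than the paper does, and your reading of the $n=0$ and $n=1$ cases, as well as the irrelevance of the $\lambda_k$ rescaling to the structural claims, is accurate.
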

\vist

\begin{rem}
  Item $(ii)$ of last proposition follows from the construction of the Adams operations in \cite{FeliuAdams}, using the isometry \eqref{koszulisosum}. These hermitian cubes appear by the image by $\varphi$ of some elements  $\widehat{A}\in \widehat{G}_1^{k}(X)_n$, $\widehat{B_i} \in \widehat{G}_2^{i,k}(X)_{n-1}$, which consist of (2-iterated) acyclic cochain complexes of hermitian $n$-cubes or $(n-1)$-cubes which are hermitian split in all directions. Note that the statement is true with the modification by $\lambda_k$ introduced here since the multiplication by the constants is not performed in the hermitian split directions.
\end{rem}

\paragraph{The transgression morphism.}

Once the Adams operations are defined for all split hermitian cubes, the final
construction makes use of the transgression bundles introduced above. This allows us to assign to every hermitian $n$-cube a collection of hermitian split cubes in $X\times (\P^1)^*$.

Let $X$ be a proper arithmetic variety.
Let $X\times (\P^1)^n$   denote $X\times _{\Z}(\P^1)^n$. For $i=1,\dots,n$ and
$j=0,1$, consider the chain morphisms induced on the complex of hermitian cubes
\begin{eqnarray*}
\delta^j_i=(Id\times \delta_j^i)^* & : & \Z \widehat{C}_*(X\times (\P^1)^n)  \rightarrow
\Z\widehat{C}_*(X\times (\P^1)^{n-1}),
\\ \sigma_i=(Id\times
\sigma^i)^* & : &  \Z\widehat{C}_*(X\times (\P^1)^{n-1})  \rightarrow  \Z\widehat{C}_*(X\times
(\P^1)^{n}).
\end{eqnarray*}

As before, let $p_1,\dots,p_n$ be the projections onto the $i$-th coordinate of
$(\P^1)^n$.
Let  $\Z \widehat{C}^{\P}_{*,*}(X)$ be the 2-iterated chain complex given by
$$\Z \widehat{C}^{\P}_{r,n}(X): = \Z \widehat{C}_{r}(X\times (\P^1)^{n}),$$ and differentials
$(d,\delta)$, with $d$ the differential of the complex of cubes and $\delta = \sum (-1)^{i+j}\delta^j_i$.
Denote by $(\Z \widehat{C}^{\P}_{*}(X),d_s)$ the associated simple complex.

Let
\begin{eqnarray*}
\Z \widehat{C}^{\P}_{r,n}(X)_{deg} & = & \sum_{i=1}^n  \sigma_i(\Z
\widehat{C}^{\P}_{r,n-1}(X)) + p_i^*\mathcal{O}(1) \otimes \sigma_i(\Z
\widehat{C}^{\P}_{r,n-1}(X)), \\
\wZ \widehat{C}^{\P}_{r,n}(X)_{\deg} & = & \Z \widehat{C}^{\P}_{r,n}(X)_{deg}/ \Z
\widehat{D}_{r}(X\times (\P^1)^n)_{deg},
\end{eqnarray*} and
let
$$\wZ \widehat{C}^{\widetilde{\P}}_{r,n}(X) :=   \wZ \widehat{C}^{\P}_{r,n}(X)/\wZ \widehat{C}^{\P}_{r,n}(X)_{\deg}.$$
Denote by $(\wZ \widehat{C}^{\widetilde{\P}}_*(X),d_s)$ the simple complex
associated to this 2-iterated chain complex.

\begin{prp}[(\cite{FeliuAdams}, Proposition 3.2)]\label{dold} If $X$ is a regular noetherian scheme,
the natural morphism of complexes
$$\wZ \widehat{C}_*(X)= \wZ \widehat{C}^{\widetilde{\P}}_{*,0}(X) \rightarrow \wZ \wC^{\widetilde{\P}}_*(X)$$
induces an isomorphism on homology with coefficients in $\Q$.
\end{prp}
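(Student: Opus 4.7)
The plan is to mimic the proof of Proposition \ref{qisoproj}, replacing the Dold--Thom isomorphism in Deligne--Beilinson cohomology by the projective bundle formula in rational algebraic $K$-theory, accessed via McCarthy's isomorphism \eqref{mccarthyiso}.

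Filter the simple complex $\wZ\wC^{\widetilde{\P}}_*(X)$ by $\P^1$-degree. The resulting first-quadrant spectral sequence has $E^1$-page
$$E^1_{r,n}=H_r\bigl(\wZ\widehat{C}^{\widetilde{\P}}_{*,n}(X)\bigr)_{\Q},$$
with horizontal differential induced by $\delta$, converging to $H_{r+n}(\wZ\wC^{\widetilde{\P}}_*(X))_{\Q}$. The inclusion $\wZ\widehat{C}_*(X)=\wZ\widehat{C}^{\widetilde{\P}}_{*,0}(X)\hookrightarrow \wZ\wC^{\widetilde{\P}}_*(X)$ will be a rational quasi-isomorphism as soon as the $E^1$-page is concentrated in column $n=0$, so the problem reduces to showing
$$H_r\bigl(\wZ\widehat{C}^{\widetilde{\P}}_{*,n}(X)\bigr)_{\Q}=0\qquad\text{for all }n>0.$$

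For fixed $n>0$, I would use the short exact sequence of chain complexes
$$0\to \wZ\widehat{C}^{\P}_{*,n}(X)_{\mathrm{deg}}\to \wZ\widehat{C}^{\P}_{*,n}(X)\to \wZ\widehat{C}^{\widetilde{\P}}_{*,n}(X)\to 0$$
together with its long exact sequence in rational homology. By \eqref{mccarthyiso} applied to the regular noetherian scheme $X\times(\P^1)^n$, the middle term computes $K_r(X\times(\P^1)^n)_{\Q}$, which by the projective bundle formula decomposes as
$$K_r(X\times(\P^1)^n)_{\Q}\cong \bigoplus_{S\subseteq\{1,\dots,n\}} K_r(X)_{\Q}\cdot \prod_{i\in S}[p_i^*\mathcal{O}(1)].$$
The claim is that each summand is in the image of $H_r(\wZ\widehat{C}^{\P}_{*,n}(X)_{\mathrm{deg}})_{\Q}$. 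For a proper subset $S\subsetneq\{1,\dots,n\}$ with $j\notin S$, after reindexing projections the summand is the pullback via $\sigma^j$ of the analogous summand on $X\times(\P^1)^{n-1}$, hence lies in $\sigma_j(\wZ\widehat{C}^{\P}_{*,n-1}(X))$. For the top summand $S=\{1,\dots,n\}$, a generator $a\cdot\prod_{i=1}^n[p_i^*\mathcal{O}(1)]$ with $a\in K_r(X)_{\Q}$ can be written as $p_n^*\mathcal{O}(1)\otimes \sigma_n\bigl(a\cdot\prod_{i<n}[p_i^*\mathcal{O}(1)]\bigr)$, placing it in the second summand of the degenerate subcomplex. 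A splitting furnished by the same projective bundle decomposition shows the map is also injective rationally, so $H_r(\wZ\widehat{C}^{\widetilde{\P}}_{*,n}(X))_{\Q}=0$.

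The main obstacle is to lift the projective bundle decomposition from $K$-groups to the level of the cube complex. Concretely, one must check that $\sigma_i$ is injective on cubes (which follows from the flatness, hence exactness of the pullback, of the projection $\sigma^i$), that both $\sigma_i(\Z\widehat{C}^{\P}_{*,n-1}(X))$ and its $p_i^*\mathcal{O}(1)$-twist are stable under the cube differential $d$, and that the rational homology of each piece matches the expected summand in the projective bundle decomposition. Once this dictionary is in place, the spectral sequence degenerates at $E^2$ onto the $n=0$ row, yielding the desired isomorphism.
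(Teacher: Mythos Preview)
The paper does not give its own proof of this proposition; it simply cites \cite[Proposition~3.2]{FeliuAdams} and places a $\square$. Your outline is exactly the argument one expects to find there, and it consciously parallels the proof of Proposition~\ref{qisoproj}: filter by $\P^1$-degree, reduce via the resulting spectral sequence to the vanishing of $H_r(\wZ\widehat{C}^{\widetilde{\P}}_{*,n}(X))_{\Q}$ for $n>0$, and establish this vanishing from the projective bundle formula $K_r(X\times(\P^1)^n)_{\Q}\cong \bigoplus_S K_r(X)_{\Q}\cdot\prod_{i\in S}[p_i^*\mathcal{O}(1)]$ combined with McCarthy's isomorphism \eqref{mccarthyiso}.

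One point deserves a bit more care than you give it. Your surjectivity argument is fine, but the injectivity of $H_r(\wZ\widehat{C}^{\P}_{*,n}(X)_{\deg})_{\Q}\to H_r(\wZ\widehat{C}^{\P}_{*,n}(X))_{\Q}$ is not an immediate consequence of the projective bundle splitting on the target: the degenerate subcomplex is a \emph{sum}, not a direct sum, of the pieces $\sigma_i(\cdots)$ and $p_i^*\mathcal{O}(1)\otimes\sigma_i(\cdots)$, and the various $\sigma_i$ overlap on pullbacks from $X\times(\P^1)^{n-2}$. What one actually does (and what the cited reference carries out) is an induction on $n$ together with an inclusion--exclusion/filtration of the degenerate subcomplex, showing that its rational homology is already the full $\bigoplus_S K_r(X)_{\Q}$. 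Your final paragraph correctly flags this as where the real work lies; once that is in hand, the spectral sequence collapses as you say and the result follows.
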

\vist

In \cite[$\S$3]{FeliuAdams}, a morphism
\begin{equation}\label{T}
N \widehat{C}_*(X) \xrightarrow{T} \Z \widehat{C}^{\P}_*(X)
\end{equation}
is constructed. This morphism extends the map that assigns to every hermitian  $n$-cube $\overline{E}$ its transgression $\tr_n(\overline{E})$.
Indeed, the component of $T(\overline{E})$ in $\Z \widehat{C}^{\P}_{0,n}(X)$ is $\tr_n(\lambda(\overline{E}))$. Each of the components of $T(\overline{E})$ in
$\Z \widehat{C}^{\P}_{n-i,i}(X)$, for $i>0$, consists of a linear combination of split hermitian cubes, and hence the construction $\Psi^k$ outlined above can be applied to each of these terms. The morphism $T$ maps every hermitian cube $\overline{E}$ to a collection of hermitian cubes with canonical kernels $\lambda(\overline{E})$ and then applies the transgression construction.

In this way, one obtains for every $k\geq 0$ a chain morphism
$$ \Psi^k: N \widehat{C}_*(X) \xrightarrow{\Psi^k\circ T} \wZ \widehat{C}_*^{\widetilde{\P}}(X).$$

\begin{prp}[(\cite{FeliuAdams}, Theorem 4.2)]
The Adams operations on the hi\-gher al\-gebraic $K$-groups of $X$, after tensoring by $\Q$ (as given by Gillet and Soul\'e in \cite{GilletSouleFiltrations} or Grayson in \cite{Grayson1}), are represented
by the chain morphism
$$ \Psi^k: N \widehat{C}_*(X) \xrightarrow{\Psi^k\circ T} \wZ \widehat{C}_*^{\widetilde{\P}}(X).$$
\end{prp}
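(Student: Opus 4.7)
The strategy is to reduce to degree zero via the transgression morphism $T$ and then exploit Grayson's description of Adams operations on rational higher $K$-theory, which is formulated naturally in terms of the complex of cubes. First I would identify both sides with rational homology groups: by \eqref{mccarthyiso} applied to $\widehat{\mmP}(X)$ we have $K_n(X)_{\Q}\cong H_n(\wZ \widehat{C}_*(X),\Q)\cong H_n(N\widehat{C}_*(X),\Q)$, and by Proposition \ref{dold} the natural inclusion induces an isomorphism $H_n(\wZ \widehat{C}_*(X),\Q)\cong H_n(\wZ \widehat{C}_*^{\widetilde{\P}}(X),\Q)$. After these identifications, the task reduces to checking that the map induced by $\Psi^k\circ T$ on $H_n(N\widehat{C}_*(X),\Q)$ coincides with the Adams operation on $K_n(X)_{\Q}$.

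Next I would verify the base case $n=0$ by direct inspection. On a hermitian vector bundle $\overline{E}$ viewed as a $0$-cube, the transgression $T$ acts trivially, since there are no $\P^1$-directions to transgress through, and Proposition \ref{adams2}(i) identifies $\Psi^k(\overline{E})$ with the secondary Euler characteristic class $\sum_{p\geq 0}(-1)^{k-p+1}(k-p)\,S^p\overline{E}\otimes \bigwedge\nolimits^{k-p}\overline{E}$ of the Koszul complex. This is precisely the definition of $\Psi^k[E]$ on $K_0(X)$ adopted by Gillet--Soulé and by Grayson, so the identification holds at $n=0$.

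The main step, and the chief obstacle, is to propagate the identification from $n=0$ to all $n>0$. Here I would follow Grayson's model, in which Adams operations on rational higher $K$-theory are detected on the subcomplex of split cubes: on a split $n$-cube, $\Psi^k$ is the secondary Euler characteristic class of the associated $k$-fold Koszul-type complex, and the direct sum compatibility is supplied by the isometry \eqref{koszulisosum}. By Proposition \ref{adams2}(ii),(iii), our morphism $\Psi^k$ restricted to $\Z \widehat{\Spn}_*(X)$ is constructed precisely from this Koszul recipe, through the intermediate complex $\Z \widehat{G}^k(X)_*$ and the chain morphism $\varphi$, so it realizes Grayson's prescription on split cubes. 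For a general hermitian cube $\overline{E}$, the transgression $T(\overline{E})$ has as its component in bidegree $(0,n)$ the ordinary transgression $\tr_n(\lambda(\overline{E}))$ and as its components in bidegrees $(n-i,i)$ for $i>0$ linear combinations of split hermitian cubes; on each summand $\Psi^k$ is thus governed by the Grayson construction and the total contribution assembles into the image of the Adams operation inside $H_n(\wZ \widehat{C}_*^{\widetilde{\P}}(X),\Q)$.

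Finally, I would record that $\Psi^k\circ T$ is a chain morphism (which rests on the three compatibility identities for $\widehat{\varphi}_1$, $\widehat{\varphi}_2$ and $\Phi^i$), and that the modification by $1/\sqrt{k}$ in $\lambda_k$ introduced in Remark \ref{modadams} only rescales certain morphisms within acyclic complexes by a nonzero scalar, hence does not alter the class in rational homology. Combined with additivity and the $n=0$ base case, this will yield the desired identification of $\Psi^k\circ T$ with the Adams operation on $K_n(X)_{\Q}$ for every $n\geq 0$.
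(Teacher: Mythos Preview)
The paper does not prove this proposition: it is stated as a citation of \cite[Theorem 4.2]{FeliuAdams} and is followed immediately by the end-of-proof symbol with no argument. So there is no proof in the present paper to compare against; the result is imported wholesale from the companion paper.

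Your sketch is a reasonable outline of the strategy carried out in \cite{FeliuAdams}: identify both source and target with rational $K$-theory via McCarthy's isomorphism and Proposition~\ref{dold}, anchor the comparison at $n=0$ where $\Psi^k$ is literally the secondary Euler characteristic of the Koszul complex (hence Grayson's Adams operation on $K_0$), and then use that $T$ carries a general cube to a combination of split cubes on which $\Psi^k$ is built from the Koszul recipe. Your observation that the $1/\sqrt{k}$ twist of Remark~\ref{modadams} leaves the degree-zero map unchanged, and hence does not affect the induced map on homology, is exactly the point made there. What your sketch leaves implicit is the actual work of \cite{FeliuAdams}: proving that $T$ is a chain morphism landing in split cubes, that $\Psi^k$ on split cubes is well defined and a chain morphism via the intermediate complex $\Z\widehat{G}^k(X)_*$, and that the resulting operation agrees with the one of Gillet--Soul\'e/Grayson (which in \cite{FeliuAdams} goes through a comparison with Grayson's $G$-construction rather than a bare ``propagation from $n=0$''). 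As a summary this is fine; as a self-contained proof it would need those technical verifications spelled out.
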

\vist

\begin{rem}\label{trans2}
The fact that the image of $T$ consists of split cubes, is proved in
\cite[Lemma 3.15]{FeliuAdams}. The fact that the image consists indeed of hermitian split
cubes, follows from the fact that
\begin{eqnarray}
 \tr\nolimits_n(\overline{E})|_{y_i=0} & \cong &  \tr\nolimits_{n-1}(\partial_i^0\overline{E})
 \oplus^{\bot} \tr\nolimits_{n-1}(\partial_i^2\overline{E}),
\end{eqnarray}
for every hermitian $n$-cube $\overline{E}$ with canonical kernels, and with $\cong$ being an isometry.
\end{rem}

\subsection{The Koszul complex and Bott-Chern forms}\label{bottchernkoszul}
In this section we discuss the Bott-Chern form of the Koszul complex, by determining hermitian metrics on it that would make its Bott-Chern form to vanish. This is the cause of the modification $\lambda_k$  introduced in the definition of the algebraic Adams operations. We next perform a direct comparison of the definition of the Adams operations on locally free sheaves and the secondary Euler characteristic class of the Koszul complex.

Although it is not required for the development of this paper, we will deduce the value of the Bott-Chern form of the Koszul complex, with the hermitian metrics fixed at the beginning of this section. This will be an easy consequence of all the computations of the first part of this subsection.

The following is a known result.
\begin{lmm}\label{koszul} Let $E$ be a locally free sheaf of finite rank on any variety.
For all $k\geq 1$, the $k$-th Koszul complex of
$E$ is split, i.e.  for all $0\leq j \leq k-1$, $\mu^j(\Psi^k(E)^*)
$ is a split short exact sequence.
\end{lmm}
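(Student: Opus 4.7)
The plan is to show the Koszul complex is split exact by constructing an explicit global contracting homotopy. For each $j$, define the natural $\mathcal{O}_X$-linear map
$$s_j\colon S^jE\otimes \bigwedge\nolimits^{k-j}E \longrightarrow S^{j-1}E\otimes \bigwedge\nolimits^{k-j+1}E$$
given on local sections by
$$s_j(u_1\cdots u_j\otimes v_1\wedge\cdots\wedge v_{k-j}) = \frac{1}{k}\sum_{i=1}^{j} u_1\cdots\widehat{u_i}\cdots u_j \otimes u_i\wedge v_1\wedge\cdots\wedge v_{k-j}.$$
Being functorial in $E$ and given by natural polynomial operations on symmetric, exterior and tensor powers, $s_j$ glues to a global morphism of sheaves. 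This is the classical Koszul co-differential (up to normalization).

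The key step is the homotopy identity
$$\varphi_{j-1}\circ s_j + s_{j+1}\circ \varphi_j = \mathrm{id}_{\Psi^k(E)^j},$$
which follows from a direct local computation. On a monomial $u_1\cdots u_j\otimes v_1\wedge\cdots\wedge v_{k-j}$, expanding both sides produces a diagonal identity term plus cross-terms that cancel thanks to the Euler-type identity on the bigraded algebra $S^*E\otimes \bigwedge^*E$; the prefactor $1/k$ accounts for the total degree $k$. In particular, this also gives a direct proof of exactness of $\Psi^k(E)^*$.

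Given the homotopy, for each $j\in\{0,\dots,k-1\}$ the short exact sequence
$$\mu^j(\Psi^k(E)^*)\colon\ 0\to \ker\varphi_j\hookrightarrow \Psi^k(E)^j\xrightarrow{\varphi_j}\ker\varphi_{j+1}\to 0$$
is split by the restriction of $s_{j+1}$ to $\ker\varphi_{j+1}$: for $y\in \ker\varphi_{j+1}$, applying the homotopy identity at level $j+1$ gives $\varphi_j\,s_{j+1}(y) = y - s_{j+2}\,\varphi_{j+1}(y) = y$. Since $s_{j+1}$ is a global morphism of sheaves, this section is global and yields the required splitting.

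The main obstacle is the bookkeeping in the homotopy identity: the definition of $\varphi_p$ used in the paper carries normalization factors $\tfrac{1}{p!(k-p-1)!}$ together with the symmetrization and antisymmetrization maps $\iota$ and $j$, so one must carefully track these constants to reduce the verification to the standard Euler identity. Implicitly one also needs $k$ to be invertible on $X$, a hypothesis automatically met in the complex-analytic setting where the Bott-Chern forms of the next subsection are considered.
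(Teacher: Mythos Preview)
Your approach is correct and essentially the same as the paper's: the section map you call $s_{p+1}$ coincides exactly with the paper's $\psi_{p}$, once one unwinds the normalization $\psi_{p}=\frac{1}{k\,p!\,(k-p-1)!}(\pi_{p}\otimes\rho_{k-p})\circ(\iota_{p+1}\otimes j_{k-p-1})$ on monomials. The only difference is in the verification. The paper checks directly that $\varphi_{p}\psi_{p}\varphi_{p}=\varphi_{p}$ by an explicit combinatorial computation on a monomial basis, whereas you invoke the stronger contracting-homotopy identity $\varphi_{j-1}s_{j}+s_{j+1}\varphi_{j}=\mathrm{id}$ (the classical Euler identity on $S^{*}E\otimes\bigwedge^{*}E$, which gives $k\cdot\mathrm{id}$ in total degree $k$) and then restrict to $\ker\varphi_{j+1}$. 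Your packaging is a bit cleaner and also recovers exactness for free; the paper's explicit calculation has the advantage that all constants are visibly tracked, which matters for the metric computations in the next proposition.
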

\begin{proof} Recall that the morphisms in the Koszul complex, $\varphi_p$, where defined as
$$\varphi_p = \frac{1}{p!(k-p-1)!} (\pi_{p+1}\otimes \rho_{k-p-1})\circ (\iota_p\otimes j_{k-p}).$$

 Let
$$\psi_p: S^{p+1}E \otimes \bigwedge\nolimits^{k-p-1}E \longrightarrow S^pE  \otimes
\bigwedge\nolimits^{k-p}E
$$
be given as
$$\psi_{p}=\frac{1}{k\ p!\ (k-p-1)!}(\pi_{p}\otimes \rho_{k-p})\circ(\iota_{p+1}\otimes j_{k-p-1}).$$
If $\psi_{p}$ is a section of $\varphi_{p}$ over $\im \varphi_{p}$, then the short exact sequence
$$0\rightarrow \ker \varphi_{p} \rightarrow  S^pE  \otimes \bigwedge\nolimits^{k-p}E \rightarrow \im \varphi_{p} \rightarrow 0$$
is split for all $p$. That is, there
is an isomorphism
$$S^pE \otimes \bigwedge\nolimits^{k-p} E \cong \ker \varphi_p \oplus \im \varphi_p. $$

In order to see that $\psi_{p}$ is a section of $\varphi_{p}$ over $\im \varphi_{p}$, we have to see that for all $e\in
S^pE  \otimes \bigwedge\nolimits^{k-p}E$, we have
$$\varphi_p\psi_p\varphi_p(e)=\varphi_p(e).$$

Assume that the rank of $E$ is $n$ and consider a local frame in $E$, $\{e_{1},\dots,e_{n}\}$.
Renaming the indices, it is only necessary to check the previous equality for an element of the form
$e= e_{i_{1}}\cdot \ldots \cdot e_{i_{p}} \otimes e_{1} \wedge \ldots \wedge e_{k-p}$
where $i_{1},\dots,i_{p}\in \{1,\dots,n\}$.
By definition,
\begin{eqnarray*}
\varphi_p(e) & = & \frac{1}{p!(k-p-1)!} \sum_{\substack{\sigma\in \mathfrak{S}_p \\ \tau\in \mathfrak{S}_{k-p}}}(-1)^{|\tau|}
e_{i_{\sigma(1)}} {\cdot} \ldots {\cdot} e_{i_{\sigma(p)}} {\cdot} e_{\tau(1)}
\otimes e_{\tau(2)} \wedge   \ldots \wedge e_{\tau(k-p)} \\
&=& \frac{1}{p!(k-p-1)!} \sum_{\tau\in \mathfrak{S}_{k-p}} (-1)^{|\tau|}p!\ e_{i_1}
{\cdot} \ldots {\cdot} e_{i_p} {\cdot} e_{\tau(1)} \otimes e_{\tau(2)}
\wedge \ldots \wedge e_{\tau(k-p)}.
\end{eqnarray*}
Observe that
if $\tau(1)=j$, then there is a decomposition $\tau=\tau'\rho$ with $\tau',\rho\in
\mathfrak{S}_p$,
$\rho(1,\ldots,k-p)=(j,1,\ldots,\widehat{j},\ldots,k-p)$ and $\tau'(1)=1$.
 The signature of $\rho$ is $(-1)^{j-1}$. Hence,
\begin{equation}\label{varphidef}\varphi_p(e)=
 \sum_{j=1}^{k-p}
 (-1)^{j-1} \  e_{i_1}
{\cdot} \ldots {\cdot} e_{i_p} {\cdot} e_{j} \otimes e_{1} \wedge \ldots
  \widehat{e_{j}}  \ldots \wedge e_{k-p}.
\end{equation}
Proceeding  like in the computation of $\varphi_{p}(e)$, we obtain that
\begin{equation}\label{psidef}
\psi_p\varphi_p(e)= \frac{k-p}{k}\ e +
\sum_{j=1}^{k-p} \sum_{t=1}^{p}\frac{(-1)^{j-1}}{k} \quad \begin{array}{l} e_{i_1} {\cdot} \ldots
 \widehat{e_{i_t}} \ldots {\cdot} e_{ i_p}\cdot
e_{j} \otimes \\ \otimes e_{i_t}\wedge e_{1} \wedge \ldots
\widehat{e_{j}}  \ldots\wedge e_{{k-p}}. \end{array}
\end{equation}
Therefore,
$$\varphi_p\psi_p\varphi_p(e)= \frac{k-p}{k}\ \varphi_p(e) + \frac{1}{k}\varphi_p(y)$$
where
$$ y=\sum_{j=1}^{k-p} \sum_{t=1}^{p} (-1)^{j-1}  e_{i_1} {\cdot} \ldots
 \widehat{e_{i_t}} \ldots {\cdot} e_{ i_p}\cdot
e_{j} \otimes e_{i_t}\wedge e_{1} \wedge \ldots
\widehat{e_{j}}  \ldots\wedge e_{{k-p}}.
$$
Using \eqref{varphidef}, we have
\begin{eqnarray*}
  \varphi_p(y) &= & \sum_{j=1}^{k-p} \sum_{t=1}^{p} (-1)^{j-1}  e_{i_1} {\cdot} \ldots
 {\cdot} e_{ i_p} \cdot
e_{j} \otimes e_{1} \wedge \ldots
\widehat{e_{j}}  \ldots\wedge e_{{k-p}}
  \\ &+& \sum_{j=1}^{k-p} \sum_{t=1}^{p}\sum_{l=1}^{j-1} (-1)^{j-1+l}  \quad \begin{array}{l} e_{i_1} {\cdot} \ldots
 \widehat{e_{i_t}} \ldots {\cdot} e_{ i_p}\cdot
e_{j}\cdot e_l \\ \otimes e_{i_t}\wedge e_{1} \wedge \ldots \widehat{e_l} \ldots
\widehat{e_{j}}  \ldots\wedge e_{{k-p}}\end{array} \\
&+& \sum_{j=1}^{k-p} \sum_{t=1}^{p}\sum_{l=j+1}^{k-p} (-1)^{j+l} \quad \begin{array}{l}  e_{i_1} {\cdot} \ldots
 \widehat{e_{i_t}} \ldots {\cdot} e_{ i_p}\cdot
e_{j}\cdot e_l \\ \otimes e_{i_t}\wedge e_{1} \wedge \ldots \widehat{e_j} \ldots
\widehat{e_{l}}  \ldots\wedge e_{{k-p}} \end{array} \\ &=& p \ \varphi_p(e)
\end{eqnarray*}
since the two big summands on the indices $l$ and $j$ cancel.
Hence, we have seen that
$$\varphi_p\psi_p\varphi_p(e)= \frac{k-p}{k}\ \varphi_p(e) + \frac{p}{k}\varphi_p(e)=\varphi_p(e).$$
\end{proof}

Although the Koszul complex is algebraic split, the short exact sequences $\mu^j(\Psi^k(\overline{E})^*)$ are not
orthogonal split and hence, its Bott-Chern form is not zero, as it would be desirable. In order to achieve this, we need to multiply each morphism
 $\varphi_p$ of the Koszul complex by $\frac{1}{\sqrt{k}}$.

\begin{prp}\label{koszulvanish}
Let $\overline{E}$ be a hermitian vector bundle over a smooth proper complex variety and consider the Koszul complex $\Psi^k(\overline{E})^*$, $k\geq 1$. Then, the acyclic cochain complex $\lambda_k(\Psi^k(E)^*)$ satisfies that $\mu^j(\lambda_k(\Psi^k(\overline{E})^*))$ is hermitian split for all $j$.
\end{prp}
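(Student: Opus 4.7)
The plan is to reduce the claim to a Hodge-type identity for the Koszul differentials $\varphi_p$ and their metric adjoints, and then to conclude by a short piece of linear algebra. By Definition \ref{splitcube}, the hermitian $1$-cube $\mu^j(\lambda_k(\Psi^k(\overline{E})^*))$ corresponds to the short exact sequence
$$ 0 \to \ker\varphi_j \hookrightarrow S^j\overline{E}\otimes \bigwedge\nolimits^{k-j}\overline{E} \xrightarrow{\frac{1}{\sqrt{k}}\varphi_j} \ker\varphi_{j+1} \to 0$$
(using Lemma \ref{koszul} to identify the quotient with $\ker\varphi_{j+1}$), and it is hermitian split iff $T := \frac{1}{\sqrt{k}}\varphi_j\big|_{(\ker\varphi_j)^\perp}$ is an isometry onto $\ker\varphi_{j+1}$. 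Equivalently, with $T^*$ its Hilbert-space adjoint, one needs $TT^* = \Id$ on $\ker\varphi_{j+1}$, i.e., $\varphi_j\varphi_j^* = k\cdot \Id$ on $\ker\varphi_{j+1}$.

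The central step is therefore the Hodge--Laplace identity
$$\varphi_j\varphi_j^* + \varphi_{j+1}^*\varphi_{j+1} = k\cdot\Id \quad \text{on } S^{j+1}\overline{E}\otimes \bigwedge\nolimits^{k-j-1}\overline{E},$$
which I would establish pointwise. Fix a local orthonormal frame $\{e_1,\dots,e_n\}$ of $\overline{E}$; an inspection of formula \eqref{varphidef} gives the compact expression $\varphi_p = \sum_{i=1}^n m_{e_i}\otimes \partial_{e_i}$, where $m_{e_i}$ is multiplication by $e_i$ on $S^\bullet\overline{E}$ and $\partial_{e_i}$ is interior contraction on $\bigwedge^\bullet\overline{E}$. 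With respect to the metrics fixed at the start of the section, a direct computation gives $m_{e_i}^* = \partial/\partial e_i$ (the derivation of $S^\bullet\overline{E}$ with $e_l \mapsto \delta_{il}$) and $\partial_{e_i}^* = e_i\wedge(-)$, hence $\varphi_p^* = \sum_i (\partial/\partial e_i)\otimes (e_i\wedge)$. Using the commutators $[\partial/\partial e_i, m_{e_l}] = \delta_{il}\cdot\Id$ and $\{\partial_{e_l}, e_i\wedge\} = \delta_{il}\cdot\Id$, the mixed terms in $\varphi_j\varphi_j^*(y\otimes\omega)$ and $\varphi_{j+1}^*\varphi_{j+1}(y\otimes\omega)$ cancel, leaving the two Euler-type contributions $\sum_i e_i\cdot (\partial/\partial e_i)y = (j+1)\,y$ and $\sum_i e_i\wedge\partial_{e_i}\omega = (k-j-1)\,\omega$, whose sum is $k\cdot(y\otimes\omega)$.

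From the identity and $\varphi_{j+1}\varphi_j = 0$, restricting to $\ker\varphi_{j+1}$ gives $\varphi_j\varphi_j^* = k\cdot\Id$; Lemma \ref{koszul} provides exactness, so $\im\varphi_j = \ker\varphi_{j+1}$ and $T$ is a bijection $(\ker\varphi_j)^\perp = \im\varphi_j^* \to \ker\varphi_{j+1}$ whose adjoint is $T^* = \frac{1}{\sqrt{k}}\varphi_j^*\big|_{\ker\varphi_{j+1}}$. Then $TT^* = \Id$, and bijectivity forces $T^*T = \Id$ as well, so $T$ is an isometry and the cube is hermitian split. The main obstacle is the adjoint computation, which depends sensitively on the nonstandard normalization $\|e_i^p\|^2 = p!$ on $S^p\overline{E}$: this scaling is precisely what makes $m_{e_i}^*$ the derivation $\partial/\partial e_i$, producing the factor $p+1$ via the Euler operator, which then combines with the exterior contribution $k-p-1$ to give the clean constant $k$. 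This identity of constants is exactly why the $\lambda_k$-rescaling by $1/\sqrt{k}$ introduced in Remark \ref{modadams} is the correct adjustment: any other scaling would destroy the isometry.
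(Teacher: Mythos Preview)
Your proof is correct and takes a genuinely different route from the paper's own argument. The paper proves the result by a direct, lengthy computation: for a generic basis element $e$ it computes separately the norm $\|\varphi_p(e)\|_i$ in $S^{p+1}\overline{E}\otimes\bigwedge^{k-p-1}\overline{E}$ and the quotient norm $\|\varphi_p(e)\|_q$ coming from $S^{p}\overline{E}\otimes\bigwedge^{k-p}\overline{E}$, after first checking (again by hand, via explicit inner products in $T^k\overline{E}$) that the section $\psi_p\varphi_p(e)$ constructed in Lemma~\ref{koszul} lands in $(\ker\varphi_p)^\perp$. Several pages of combinatorics with permutations and multiplicities are needed to conclude that the ratio of the two squared norms is exactly $k$.

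Your argument replaces all of this with the K\"ahler--Hodge identity $\varphi_j\varphi_j^{*}+\varphi_{j+1}^{*}\varphi_{j+1}=k\cdot\Id$, obtained by writing $\varphi_p=\sum_i m_{e_i}\otimes\partial_{e_i}$ and using the canonical commutation/anticommutation relations for the Weyl and Clifford operators on $S^{\bullet}\overline{E}$ and $\bigwedge^{\bullet}\overline{E}$. The two Euler contributions $(j{+}1)+(k{-}j{-}1)=k$ make transparent \emph{why} the rescaling factor is the total degree $k$, and the conclusion follows by restricting the identity to $\ker\varphi_{j+1}$. This is shorter and more conceptual; it also implicitly recovers the paper's auxiliary facts (for instance, your $\frac{1}{k}\varphi_p^{*}$ plays the role of the paper's section $\psi_p$, and the orthogonality $\psi_p\varphi_p(e)\in(\ker\varphi_p)^{\perp}$ is automatic since $\im\varphi_p^{*}=(\ker\varphi_p)^{\perp}$). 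The only place one must be careful---and you flag it---is the adjoint computation $m_{e_i}^{*}=\partial/\partial e_i$, which relies precisely on the normalization $\|e_1^{m_1}\cdots e_n^{m_n}\|^2=m_1!\cdots m_n!$ fixed at the start of the section. One could also note, as a minor streamlining, that the same Hodge identity at level $j$ restricted to $(\ker\varphi_j)^{\perp}=\ker\varphi_{j-1}^{*}$ yields $T^{*}T=\Id$ directly, making the final bijectivity step unnecessary.
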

\begin{proof}
Let $\iota_{p}$ and $j_{p}$ be the inclusions defined in \eqref{iota}, $\varphi_{p}$ as defined in \eqref{varphi} and $\psi_{p}$ be defined as in the proof of the previous lemma.

Let us compute explicitly the squared norm of an element of  $\im \varphi_p$ by the two hermitian metrics: the one induced by the inclusion into $S^{p+1}\overline{E} \otimes \bigwedge\nolimits^{k-p-1} \overline{E}$ and the quotient metric induced by that of $S^p\overline{E} \otimes \bigwedge\nolimits^{k-p} \overline{E}$.
To prove the statement, we need to see that the two norms are related by the factor $1/k$.

Denote by $||\varphi_p(e)||_{i}$ the norm of $\varphi_p(e)$ in $S^{p+1}\overline{E} \otimes \bigwedge\nolimits^{k-p-1} \overline{E}$ and by $||\varphi_p(e)||_{q}$ the norm of $\varphi_p(e)$ given by considering it as a quotient of  $S^{p}\overline{E} \otimes \bigwedge\nolimits^{k-p}\overline{E}$.

Assume that the rank of $\overline{E}$ is $n$ and consider a orthonormal local frame in $\overline{E}$, $e_{1},\dots,e_{n}$. Then, $\{e_{i_{1}}\otimes \ldots \otimes e_{i_{k}}\}_{i_{1},\dots,i_{k}\in \{1,\dots,n\}}$ forms a orthonormal basis of $T^k(\overline{E})$. To ease the notation, let us write
$$e^{\otimes}_{i_{1},\dots,i_{k}}=e_{i_{1}}\otimes \ldots \otimes e_{i_{k}}.$$
For an element of the form
$\sum_{\lambda \in I} \alpha_{\lambda} e^{\otimes}_{\lambda} $
with $I$ a subset of $\{1,\dots,n\}^{k}$ and $ \alpha_{\lambda}\in \C$, the square of its norm is given by
$\sum_{\lambda \in I} \alpha_{\lambda}^2. $

\emph{Notation:} If $\sigma\in \mathfrak{G}_p$ and $\lambda=(\lambda_1,\dots,\lambda_p)\in \{1,\dots,n\}^{p}$ we will write
$$\sigma(\lambda_1,\dots,\lambda_p) = (\lambda_{\sigma(1)},\dots,\lambda_{\sigma(p)})$$

Renaming the indices, it is only necessary to compute the norms for an element of the form
$$\varphi_{p}(e) \quad\textrm{with}\quad e= e_{i_{1}}\cdot \ldots \cdot e_{i_{p}} \otimes e_{1} \wedge \ldots \wedge e_{k-p}$$
where $i_{1},\dots,i_{p}\in \{1,\dots,n\}$. Let $m_{i}$ denote the number of times that $e_{i}$ appears in $e_{i_{1}}\cdot \ldots \cdot e_{i_{p}} $. In this way, we have an equality
$$ e= e_{1}^{\cdot m_{1}}\cdot \ldots \cdot e_{n}^{\cdot m_{n}} \otimes e_{1} \wedge \ldots \wedge e_{k-p},$$
where $ e_{i}^{\cdot m_{i}}= e_{i}\cdot \stackrel{m_{i}}{\ldots} \cdot e_{i}$. Note that $m_{1}+\dots + m_{n}=p$.

Then, by \eqref{varphidef}, we have
$$\varphi_p(e)=
 \sum_{j=1}^{k-p}
 (-1)^{j-1} \  e_{i_1}
{\cdot} \ldots {\cdot} e_{i_p} {\cdot} e_{j} \otimes e_{1} \wedge \ldots
  \widehat{e_{j}}  \ldots \wedge e_{k-p}.
$$
The norm $||\varphi_p(e)||_i$ is computed as the norm of $\frac{(\iota_{p+1}\otimes j_{k-p-1})}{\sqrt{(p+1)!(k-p-1)!}}(\varphi_p(e))$ in $T^k(\overline{E})$. The  term $(\iota_{p+1}\otimes j_{k-p-1})(\varphi_p(e))$ is
$$ \sum_{\substack{\sigma\in \mathfrak{S}_{p+1} \\ \tau\in \mathfrak{S}_{k-p-1}}}(-1)^{|\tau|}
 \sum_{j=1}^{k-p}  (-1)^{j-1} e^{\otimes}_{\sigma(i_1,\ldots,i_p,j),\tau(1,\ldots,\widehat{j},\ldots,k-p)}.
$$
Its squared norm is given by the sum of the square of the number of occurrences of each different summand.
Given two different permutations $\tau,\tau' \in \mathfrak{S}_{k-p-1}$, we have
$$e^{\otimes}_{\sigma(i_1,\ldots,i_p,j),\tau(1,\ldots,\widehat{j},\ldots,k-p)}\neq e^{\otimes}_{\sigma(i_1,\ldots,i_p,j),\tau'(1,\ldots,\widehat{j},\ldots,k-p)}.$$
Moreover, if $j\neq j'$, the terms obtained are different as well. The only repetitions will come from the permutations $\sigma,\sigma' \in   \mathfrak{S}_{p+1}$
satisfying $\sigma(i_1,\ldots,i_p,j)=\sigma'(i_1,\ldots,i_p,j)$. For a fixed $\tau$ and $j$, there are $\frac{(p+1)!}{ m_{1}!\dots (m_{j}+1)!\dots m_{n}!}$ different terms, each of them appearing $m_{1}!\dots (m_{j}+1)!\dots m_{n}!$ times  (and   with the same sign).
Therefore,
\begin{eqnarray*}
||\varphi_p(e)||_{i}^{2} &=&  \frac{ (k-p-1)!}{(p+1)!(k-p-1)!} \sum_{j=1}^{k-p}  \frac{(p+1)! (m_{1}!\dots (m_{j}+1)!\dots m_{n}!)^{2} }{ m_{1}!\dots (m_{j}+1)!\dots m_{n}!} \\
&=&  \sum_{j=1}^{k-p} m_{1}!\dots (m_{j}+1)!\dots m_{n}! =
  m_{1}!\dots m_{n}! \
\sum_{j=1}^{k-p} (m_{j}+1) \\ &=&    m_{1}!\dots m_{n}! \
( k-p + \sum_{j=1}^{k-p} m_{j}).
\end{eqnarray*}

Let us proceed now to the computation of $||\varphi_p(e)||_{q}$. This norm is given by
$||\frac{1}{\sqrt{p!(k-p)!}}(\iota_{p}\otimes j_{k-p})(w)||_{T^k(\overline{E})}$, where $w\in (\ker \varphi_{p})^{\perp}$ satisfies $\varphi_p(w)=\varphi_p(e)$.
Let us see that $w=\psi_{p}\varphi_p(e)$. Since we have already seen that $\varphi_{p}\psi_{p}\varphi_{p}(e)=\varphi_{p}(e)$, it is enough to check that $\psi_{p}\varphi_{p}(e)\in  (\ker \varphi_{p})^{\perp}$.

By \eqref{psidef}, we have
\begin{eqnarray*}
k\psi_p\varphi_p(e)&=& (k-p)e +
\sum_{j=1}^{k-p} (-1)^{j-1} \sum_{t=1}^{p} \quad \begin{array}{l} e_{i_1} {\cdot} \ldots
 \widehat{e_{i_t}} \ldots {\cdot} e_{ i_p}\cdot
e_{j} \otimes \\ \phantom{hola}\otimes e_{i_t}\wedge e_{1} \wedge \ldots
\widehat{e_{j}}  \ldots\wedge e_{{k-p}} \end{array}
\end{eqnarray*}
If $i_{t}\in \{1,\dots,k-p\}$ and $i_{t}\neq j$, we have $ e_{i_t}\wedge e_{1} \wedge \ldots
\widehat{e_{j}}  \ldots\wedge e_{{k-p}}=0$. If $i_{t}=j$, then
$$e_{i_1} {\cdot} \ldots
 \widehat{e_{i_t}} \ldots {\cdot} e_{ i_p}\cdot
e_{j}\otimes e_{i_t}\wedge e_{1} \wedge \ldots
\widehat{e_{j}}  \ldots\wedge e_{{k-p}}=(-1)^{j-1} e.$$
Hence,
{\small
\begin{eqnarray*}
k\psi_p\varphi_p(e)&=& \big(k-p+\sum_{j=1}^{k-p}m_{j}\big)e + \\ &+&
\sum_{j=1}^{k-p} \sum_{\substack{t\in \{k-p+1,\dots,n\} \\ m_{t\neq 0}}} (-1)^{j-1}  m_{t} \left(\begin{array}{l}
e_{1}^{\cdot m_{1}} \dots e_{j}^{\cdot m_{j}+1} \dots e_{t}^{\cdot m_{t}-1} \dots e_{n}^{\cdot m_{n}}\otimes \\\phantom{holahola}\otimes  e_{t}\wedge e_{1} \wedge \ldots
\widehat{e_{j}}  \ldots\wedge e_{{k-p}}\end{array}\right).
\end{eqnarray*}}

Let $\Phi=(\iota_{p}\otimes j_{k-p})$.
Denote by $A= \frac{1}{k}\big(k-p+\sum_{j=1}^{k-p}m_{j}\big)\Phi(e)$ and $B=\Phi(\psi_p\varphi_p(e)) - A$. We have:
\begin{eqnarray*}
A &=&  \frac{1}{k}\big(k-p+\sum_{j=1}^{k-p}m_{j}\big)  \sum_{\substack{\sigma\in \mathfrak{S}_p \\ \tau\in \mathfrak{S}_{k-p}}}(-1)^{|\tau|} e^{\otimes}_{\sigma(i_1,\ldots,i_p),\tau(1,\ldots,k-p)}.
\\
B &=&  \frac{1}{k}   \sum_{j=1}^{k-p} \sum_{\substack{t\in \{k-p+1,\dots,n\}\\ m_{t\neq 0}}}(-1)^{j-1}  \sum_{\substack{\sigma\in \mathfrak{S}_p \\ \tau\in \mathfrak{S}_{k-p}}}(-1)^{|\tau|}
 \\ && \qquad \qquad \qquad  m_{t}\
e^{\otimes}_{\sigma(1^{m_1},\dots,j^{m_j+1},\dots,t^{m_t-1},\dots,n^{m_n})}\otimes e^{\otimes}_{\tau(t,1,\dots,\widehat{j},\ldots,k-p)}.
\end{eqnarray*}
We want to see that $(A+B)$ belongs to $\Phi(\ker \varphi_{p})^{\perp}=\Phi(\im \varphi_{p-1})^{\perp}$ in $T^k(\overline{E})$.
Let  $v=\varphi_{p-1}(f)$, where $f=e_{r_1}{\cdot} \ldots {\cdot} e_{r_{p-1}}  \otimes e_{s_1} \wedge \ldots
  \wedge e_{s_{k-p+1}}$, with $s_1<\dots<s_{k-p+1}$. Then,
$$v= \sum_{t=1}^{k-p+1} (-1)^{t-1} \  e_{r_1} {\cdot} \ldots {\cdot} e_{r_{p-1}} {\cdot} e_{s_t} \otimes e_{s_1} \wedge \ldots
  \widehat{e_{s_t}}  \ldots \wedge e_{s_{k-p+1}},$$
and so
$$\Phi(v)=\sum_{t=1}^{k-p+1} (-1)^{t-1} \sum_{\substack{\sigma\in \mathfrak{S}_p \\ \tau\in \mathfrak{S}_{k-p}}}(-1)^{|\tau|} e^{\otimes}_{\sigma(r_1,\ldots,r_{p-1},s_t),\tau(s_1,\ldots,\widehat{s_t},\ldots,s_{k-p+1})}. $$

 It is straightforward to see that the scalar product
  $\langle\Phi(v),A\rangle$ is 0, unless
$f=e_{r_1}{\cdot} \ldots {\cdot} e_{r_{p-1}}  \otimes e_{1} \wedge \ldots
  \wedge e_{k-p} \wedge e_{s}$  with $\{i_1,\dots,i_p\} = \{r_1,\dots,r_{p-1},s\}$ and $s\in \{k-p+1,\dots,n\}$.
  For $B$, we have two situations where it is not obvious that $\langle\Phi(v),B\rangle$ is 0. The first of them is the same as above. The second is the case where there exists an index $j$ such that $\{1,\ldots,\widehat{j},\ldots,k-p\}=\{s_1,\ldots,s_{k-p-1}\}$, $s_{k-p},s_{k-p+1}>k-p$ and
  $\{r_1,\dots,r_{p-1}\}=\{i_1,\dots,i_p,j\} \setminus \{s_{k-p},s_{k-p+1}\}$.

  In the first scenario (  $f=e_{r_1}{\cdot} \ldots {\cdot} e_{r_{p-1}}  \otimes e_{1} \wedge \ldots
  \wedge e_{k-p} \wedge e_{s}$  with $\{i_1,\dots,i_p\} = \{r_1,\dots,r_{p-1},s\}$ and $s\in \{k-p+1,\dots,n\}$), we have
  \begin{eqnarray*}
    \langle\Phi(v),A\rangle&=& \alpha\cdot \sum_{\substack{\sigma,\sigma'\in \mathfrak{S}_p\\ \tau,\tau'\in \mathfrak{S}_{k-p}}}(-1)^{|\tau|+|\tau'|}
    \left(\begin{array}{l} \langle e^{\otimes}_{\sigma(i_1,\ldots,i_p),\tau(1, \ldots,k-p)}, \\ \quad  e^{\otimes}_{\sigma'(r_1,\ldots,r_{p-1},s),\tau'(1, \ldots,k-p)} \rangle\end{array} \right)\\
     &=& \alpha\cdot \sum_{\tau\in  \mathfrak{S}_{k-p}} \sum_{\sigma,\sigma'\in \mathfrak{S}_p}
    \begin{array}{l}  \langle  e^{\otimes}_{\sigma(i_1,\ldots,i_p),\tau(1, \ldots,k-p)}, \\ \quad  e^{\otimes}_{\sigma'(r_1,\ldots,r_{p-1},s),\tau(1, \ldots,k-p)}\rangle \end{array}
      \end{eqnarray*}
  with $\alpha= \frac{(-1)^{k-p}}{k}\big(k-p+\sum_{j=1}^{k-p}m_{j}\big) $.
 Note that for every $\sigma\in \mathfrak{S}_p$, the number of $\sigma'\in \mathfrak{S}_p$ such that $$\langle e^{\otimes}_{\sigma(i_1,\ldots,i_p),\tau(1, \ldots,k-p)}, e^{\otimes}_{\sigma'(r_1,\ldots,r_{p-1},s),\tau(1, \ldots,k-p)} \rangle=1$$ is the same. This number is $\lambda=m_1!\dots m_n!$. Then,
\begin{eqnarray*}
     \langle\Phi(v),A\rangle &=&  \alpha \cdot \sum_{\tau\in  \mathfrak{S}_{k-p}} \sum_{\sigma\in \mathfrak{S}_p}
    \lambda =  \frac{(-1)^{k-p}}{k} \big(k-p+\sum_{j=1}^{k-p}m_{j}\big)  (k-p)! p! \lambda \\
    &=&   \frac{(-1)^{k-p}}{k}(k-p)! p!\big(k-p+\sum_{j=1}^{k-p}m_{j}\big)\lambda.
      \end{eqnarray*}
We proceed in the same way for $B$:
\begin{eqnarray*}
     \langle\Phi(v),B\rangle &=&
    \frac{1}{k}  \sum_{j=1}^{k-p} \sum_{\substack{t\in \{k-p+1,\dots,n\}\\ m_{t\neq 0}}}  \sum_{\substack{\sigma,\sigma'\in \mathfrak{S}_p \\ \tau,\tau'\in \mathfrak{S}_{k-p}}} \sum_{l=1}^{k-p+1} (-1)^{j+l} (-1)^{|\tau|+|\tau'|}m_{t}\
 \\ &&
\quad \langle e^{\otimes}_{\sigma(1^{m_1},\dots,j^{m_j+1},\dots,t^{m_t-1},\dots,n^{m_n}),\tau(t,1,\dots,\widehat{j},\ldots,k-p)},  \\ && \hspace{4cm} e^{\otimes}_{\sigma'(r_1,\ldots,r_{p-1},l),\tau'(1, \ldots,\widehat{l},\ldots,s)} \rangle
\\ &=&   \frac{1}{k} \sum_{j=1}^{k-p} \sum_{\substack{\sigma,\sigma'\in \mathfrak{S}_p \\ \tau\in \mathfrak{S}_{k-p}}}(-1)^{k-p+1} m_{s}\
\\ &&
\quad \langle e^{\otimes}_{\sigma(1^{m_1},\dots,j^{m_j+1},\dots,s^{m_s-1},\dots,n^{m_n}),\tau(1,\dots,\widehat{j},\ldots,k-p,s)},   \\ && \hspace{4cm}  e^{\otimes}_{\sigma'(r_1,\ldots,r_{p-1},j),\tau(1, \ldots,\widehat{j},\ldots,s)}\rangle
\end{eqnarray*}
\begin{eqnarray*}
\phantom{\langle\Phi(v),B\rangle}&=& \frac{1}{k} (-1)^{k-p+1}  m_{s} (k-p)! p!  \sum_{j=1}^{k-p} m_{1}!\dots (m_{j}+1)! \dots (m_{s}-1)! \dots m_{n}! \\
&=&
\frac{ (k-p)! p!}{k } (-1)^{k-p+1}  \lambda  (k-p+\sum_{j=1}^{k-p} m_{j})
\\ &=& -   \Phi(v)\cdot A.  \end{eqnarray*}
Therefore,
$$\langle \Phi(v),A+B \rangle=0$$
in $T^k(\overline{E})$ for all $v\in \im \varphi_{p-1}$ of the first form.

Let us consider now the second non-trivial scenario, that is, assume there exists an index $j$ such that $\{1,\ldots,\widehat{j},\ldots,k-p\}=\{s_1,\ldots,s_{k-p-1}\}$, we have $s_{k-p},s_{k-p+1}>k-p$, and
  $\{r_1,\dots,r_{p-1}\}=\{i_1,\dots,i_p,j\} \setminus \{s_{k-p},s_{k-p+1}\}$. Then, $  \langle\Phi(v),A\rangle=0$  and we have (with $\beta$ a constant):
\begin{eqnarray*}
     \langle\Phi(v),B\rangle &=&
    \beta\   \sum_{\substack{t\in \{k-p+1,\dots,n\}\\ m_{t\neq 0}}}\sum_{l=1}^{k-p+1}  \sum_{\substack{\sigma,\sigma'\in \mathfrak{S}_p \\ \tau,\in \mathfrak{S}_{k-p}}} m_{t}\
 \\ &&
 \quad \langle e^{\otimes}_{\sigma(1^{m_1},\dots,j^{m_j+1},\dots,t^{m_t-1},\dots,n^{m_n}),\tau(1,\dots,\widehat{j},\ldots,k-p,t)},  \\ && \hspace{4cm} e^{\otimes}_{\sigma'(r_1,\ldots,r_{p-1},s_l),\tau(s_1, \ldots,\widehat{s_l},\ldots,s_{k-p+1})} \rangle
\\ &=&  \beta p! (k-p)!  m_{1}!\dots (m_{j}+1)! \dots m_{n}! \big( (-1)^{k-p-1} + (-1)^{k-p}  \big)
 \\ &=& 0,
\end{eqnarray*}
where we have used that the only indices such that the scalar product is non-zero are the pairs $l=k-p$, $t=s_{k-p+1}$ and  $l=k-p+1$, $t=s_{k-p}$.

We have obtained that $\langle \Phi(v),A+B \rangle=0$
in $T^k(\overline{E})$ for all $v\in \im \varphi_{p-1}$ as desired.

We proceed now to compute the norm  $||\varphi_p(e)||_q$ which is given by $$\left\|\frac{1}{\sqrt{p!(k-p)!}}(A+B)\right\|_{T^k(\overline{E})}.$$
 As above, we should group the different summands of $A$ and $B$ if they are the same.
In order to do this, observe that the terms obtained for different permutations $\tau$, different $j$ or different $t$ are not equal. Moreover, the summands in $A$ are all different from the summands in $B$. Therefore, the only repetitions are obtained by the permutations $\sigma \in \mathfrak{S}_{p}$.

With these observations we obtain, as in the computation of $||\cdot ||_{i}$:
\begin{eqnarray*}
\frac{||A||_{q}^2}{p!(k-p)!} &=&   \frac{1}{k^2 }\big(k-p+\sum_{j=1}^{k-p}m_{j}\big)^2 m_{1}!\dots m_{n}!
\\
\frac{||B||_{q}^2}{p!(k-p)!} &=&   \frac{1}{k^2p!(k-p)!} \sum_{j=1}^{k-p} \sum_{\substack{t\in \{k-p+1,\dots,n\}\\ m_{t\neq 0}}}\sum_{\tau\in \mathfrak{S}_{k-p}} \\ &&\hspace{1.6cm}\frac{p! (m_{t})^2 (m_{1}!\dots (m_{j}+1)! \dots (m_{t}-1)! \dots m_{n}! )^2}{m_{1}!\dots (m_{j}+1)! \dots (m_{t}-1)! \dots m_{n}! } \\ &=&
\frac{1}{k^2} \sum_{j=1}^{k-p} \sum_{\substack{t\in \{k-p+1,\dots,n\}\\ m_{t\neq 0}}}  m_{t} \ m_{1}!\dots (m_{j}+1)! \dots m_{n}!
\end{eqnarray*}
Therefore,
\begin{eqnarray*}
||\varphi_p(e)||_{q}^2 &=& \frac{ m_{1}!\dots m_{n}! }{k^2  } \Big[
\big(k-p+\sum_{j=1}^{k-p}m_{j}\big)^2+ \sum_{j=1}^{k-p} \sum_{t=k-p+1}^{n} m_{t}  (m_{j}+1)
\Big] \\ &=&  \frac{  m_{1}!\dots m_{n}! }{k^2 } \Big[
\big(k-p+\sum_{j=1}^{k-p}m_{j}\big)^2+ \big(\sum_{j=1}^{k-p}  (m_{j}+1) \big) \big(\sum_{t=k-p+1}^{n} m_{t}\big) \Big].
\end{eqnarray*}
Note that
$$\sum_{t=k-p+1}^{n} m_{t}=p-\sum_{j=1}^{k-p} m_{j}. $$
Hence, denoting $\beta= \frac{ m_{1}!\dots m_{n}! }{k^2 }$, we have
\begin{eqnarray*}
||\varphi_p(e)||_{q}^2 &=& \beta \Big[
\big(k-p+\sum_{j=1}^{k-p}m_{j}\big)^2+ \big(k-p+\sum_{j=1}^{k-p} m_{j} \big) \big(p-\sum_{j=1}^{k-p} m_{j}\big) \Big] \\
&=& \beta k
\big(k-p+\sum_{j=1}^{k-p}m_{j}\big)
=  \frac{ m_{1}!\dots m_{n}! }{k  }\big(k-p+\sum_{j=1}^{k-p}m_{j}\big).
\end{eqnarray*}
Therefore, $$\frac{||\varphi_p(e)||_{i}^2}{ ||\varphi_p(e)||_{q}^2 }= k. $$
It follows that if we define $\varphi_p'=\frac{1}{\sqrt{k}}\varphi_p$ we
have
\begin{eqnarray*}
||\varphi_p'(e)||_{i}^2 &=&  \frac{1}{k} ||\varphi_p(e)||_{i}^2 =  ||\varphi_p(e)||_{q}^2 = ||\varphi'_p(e)||_{q}^2 .
\end{eqnarray*}
The last equality follows from the fact that if  $w\in (\ker \varphi_{p})^{\perp}$ satisfies $\varphi_p(w)=\varphi_p(e)$ then
$w\in (\ker \varphi'_{p})^{\perp}$ and $\varphi'_p(w)=\varphi'_p(e)$.
\end{proof}

\begin{rem}
  Note that if we had defined from the very beginning the arrows $\varphi_p$ of the Koszul complex $\Psi^K$ to be $\frac{1}{\sqrt{k}}\varphi_p$, then
  we would not have the isometry of chain complexes \eqref{koszulisosum}.
\end{rem}

\begin{crl}\label{chkoszul} Let  $\overline{E}$ be a hermitian vector bundle. Then, for all $0\leq j\leq k-1$ we have
  $$d_{\mmD}\ch(\mu^j(\Psi^k(\overline{E})^*))=0.$$
   \end{crl}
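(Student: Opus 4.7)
The plan is to deduce the corollary from Proposition \ref{koszulvanish} by comparing the unscaled 1-cube $\mu^j(\Psi^k(\overline{E})^*)$ with its $\lambda_k$-twisted version.

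First, I would unfold what $d_{\mmD}\ch(\mu^j(\Psi^k(\overline{E})^*))$ means. Since ``$\ch$'' is a chain morphism (from \eqref{ch3}), for any hermitian 1-cube $\overline{F}$ we have
$$d_{\mmD}\ch(\overline{F}) \;=\; \ch(d\overline{F}) \;=\; -\ch(\partial_1^0\overline{F}) + \ch(\partial_1^1\overline{F}) - \ch(\partial_1^2\overline{F}),$$
an expression that depends only on the three vertices of $\overline{F}$ together with their hermitian metrics, and not on the actual morphisms of the short exact sequence.

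Next, I would compare the two 1-cubes $\mu^j(\Psi^k(\overline{E})^*)$ and $\mu^j(\lambda_k(\Psi^k(\overline{E})^*))$. These have identical vertices equipped with identical hermitian metrics: the kernel of $\varphi_p$ and the kernel of $\tfrac{1}{\sqrt{k}}\varphi_p$ coincide as subsets of $\Psi^k(\overline{E})^p$, and they inherit the same subspace metric. Consequently the first step yields
$$d_{\mmD}\ch(\mu^j(\Psi^k(\overline{E})^*)) \;=\; d_{\mmD}\ch(\mu^j(\lambda_k(\Psi^k(\overline{E})^*))).$$

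Finally, I would invoke Proposition \ref{koszulvanish}: the cube $\mu^j(\lambda_k(\Psi^k(\overline{E})^*))$ is hermitian split, so by the additivity of ``$\ch$'' on orthogonal direct sums (property recalled in $\S$1.2), its Bott-Chern form vanishes, and hence so does its image under $d_{\mmD}$. Combined with the previous identity, this gives the desired $d_{\mmD}\ch(\mu^j(\Psi^k(\overline{E})^*))=0$. There is no real obstacle here; the only subtlety is the conceptual point that the $\lambda_k$ twist affects the morphism of the short exact sequence, and therefore the Bott-Chern form itself, but not the vertices or their metrics, which is precisely what makes the $d_{\mmD}$-closedness transfer between the two cubes.
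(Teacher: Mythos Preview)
Your argument is correct and in spirit identical to the paper's: both reduce to Proposition \ref{koszulvanish} by comparing $\mu^j(\Psi^k(\overline{E})^*)$ with its $\lambda_k$-twisted version and noting that only the morphisms, not the vertices with their metrics, change. The paper packages this comparison as a morphism of short exact sequences (the identity on $\ker\varphi_p$ and on the middle term, multiplication by $1/\sqrt{k}$ on $\ker\varphi_{p+1}$) and then records that the three vertical ``1-cubes'' contribute nothing to $d_{\mmD}\ch$; you instead observe directly that $d_{\mmD}\ch$ of a 1-cube is determined by the Chern forms of its three vertices, which makes the comparison immediate and avoids introducing the extra terms. Your route is a bit more economical.

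One small imprecision: in your final step you say that additivity of $\ch$ on orthogonal direct sums makes the Bott-Chern form of a hermitian split 1-cube vanish. Additivity on vertices only gives $d_{\mmD}\ch=0$ directly, not $\ch=0$; the latter is true (and is what the paper uses), but it requires the explicit computation of the transgression of a split sequence as in Lemma \ref{chzero} together with the vanishing of $\int_{\P^1}\omega\, W_1$. Since your argument only needs $d_{\mmD}\ch=0$, this does not affect the validity of your proof, but you should either weaken the claim to $d_{\mmD}\ch=0$ (which is all you use) or cite the stronger vanishing properly.
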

  \begin{proof}
    Consider the commutative diagram of short exact sequences
   {\small $$ \xymatrix{ 0 \ar[r] & \ker \varphi_p \ar[r] \ar[d]_{=} & S^{p}\overline{E} \otimes \bigwedge^{k-p} \overline{E} \ar[r]^{\varphi_p} \ar[d]_{=} &   \ker \varphi_{p+1} \ar[r] \ar[d]_{\frac{1}{\sqrt{k}}} & 0 \\ 0 \ar[r] & \ker (\frac{1}{\sqrt{k}}\varphi_p) \ar[r] & S^{p}\overline{E} \otimes \bigwedge^{k-p} \overline{E} \ar[r]^{\frac{1}{\sqrt{k}}\varphi_p} &
        \ker(\frac{1}{\sqrt{k}}\varphi_{p+1}) \ar[r] & 0.  }  $$  }
By last proposition, $\ch(\mu^j(\lambda_k\Psi^k(\overline{E})^*))=0$. Hence
    \begin{eqnarray*}
d_{\mmD}\ch(\mu^j(\Psi^k(\overline{E})^*))  &=& d_{\mmD}\ch(\mu^j(\lambda_k\Psi^k(\overline{E})^*)) - d_{\mmD}\ch(id_{\ker \varphi_p}) \\ && +
d_{\mmD}\ch(id_{S^{p}\overline{E} \otimes \bigwedge^{k-p}} ) - d_{\mmD}\ch(\ker \varphi_{p+1} \xrightarrow{\frac{1}{\sqrt{k}}}  \ker \varphi_{p+1}   ) \\
&=& 0.
\end{eqnarray*}
  \end{proof}

\subparagraph{Adams operations and the secondary Euler characteristic.}
Let $E$  be a locally free sheaf of finite rank and let
$$\psi^k(E)=N_k(\lambda^1(E),\ldots,\lambda^k(E)),$$
with $N_k$ being the $k$-th Newton polynomial. These are the Adams
operations associated to the lambda operations $\lambda^k$ on
 locally free sheaves of finite rank.
Let $\Psi^k(E)$ be the secondary Euler characteristic
class of the Koszul complex of $E$. As shown by Grayson in \cite[$\S$3]{Grayson1}, the secondary Euler characteristic class
of the $k$-th Koszul complex agrees, in the quotient group $K_0(X)$, with the usual $k$-th Adams operation. Therefore, in $K_0(X)$,
$$\psi^k(E)=\Psi^k(E).$$
This means that there exist short exact sequences $s_1,\dots,s_r$
such that
$$\psi^k(E)-\Psi^k(E)=\sum_{i=1}^r d(s_i).$$
In the next proposition, we construct explicitly  such a set of short exact sequences.
For that, let $\Psi^{k}(E)^{t*}$ denote the Koszul complex obtained by changing the $p$-th component $S^pE\otimes \bigwedge\nolimits^{k-p}E$ by
$\bigwedge\nolimits^{k-p}E\otimes S^pE$ via the canonical isomorphism.

\begin{prp}\label{psicomp}
  Let $E$ be a  locally free sheaf of finite rank. Then,
  $$\psi^k(E)-\Psi^k(E)=\sum_{i=1}^r d(s_i)$$
  with $s_i$ being $\mu^p(\Psi^{k_i}(E)^{t*})\otimes A_i,$ with $A_i$ some  locally free sheaves of finite rank of the form $\bigwedge\nolimits^{j_i}{E}$ or $T^{j_i}E$ and some indices $k_i,p,j_i$ or $s_i$ being the canonical isomorphisms $\bigwedge\nolimits^{k-p}E\otimes S^pE \cong S^pE\otimes \bigwedge\nolimits^{k-p}E$.
\end{prp}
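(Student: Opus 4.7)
The plan is to induct on $k$. The base case $k=1$ is immediate, since $\Psi^1(E)=E=\lambda^1(E)=\psi^1(E)$ and no short exact sequences are required.

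For the inductive step, I would invoke Newton's identity
$$\psi^k(E)=\sum_{i=1}^{k-1}(-1)^{i-1}\lambda^i(E)\,\psi^{k-i}(E)+(-1)^{k-1}k\,\lambda^k(E),$$
which expresses $\psi^k(E)$ in terms of smaller Adams operations. By the inductive hypothesis, each $\psi^{k-i}(E)$ differs from $\Psi^{k-i}(E)$ by a sum of admissible boundaries $d(s_j^{(i)})$. Tensoring each $s_j^{(i)}=\mu^{p}(\Psi^{k'}(E)^{t*})\otimes A_j^{(i)}$ on the left by $\lambda^i(E)=\bigwedge^i E$ produces a new short exact sequence of the same form, with $A_j^{(i)}$ replaced by $\bigwedge^iE\otimes A_j^{(i)}$; the latter is still of type $T^{j}E$ or $\bigwedge^{j}E$ up to an additional transposition isomorphism (itself allowed as an $s_i$). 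Hence, modulo admissible boundaries, the proof reduces to exhibiting
$$R:=\sum_{i=1}^{k-1}(-1)^{i-1}\bigwedge\nolimits^iE\otimes\Psi^{k-i}(E)+(-1)^{k-1}k\bigwedge\nolimits^kE-\Psi^k(E)$$
as a sum of admissible boundaries.

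To handle $R$, I would exploit the fact that, for each $m>0$ and $p$, the short exact sequence $\mu^{p}(\Psi^{m}(E)^{t*})$ tensored with any admissible $A$ has boundary
$$-\,[\ker\varphi_{p}^{t}\otimes A]+\bigl[\bigwedge\nolimits^{m-p}E\otimes S^{p}E\otimes A\bigr]-[\ker\varphi_{p+1}^{t}\otimes A]$$
in the chain complex of cubes. Telescoping over $p$ for fixed $m$ yields the congruence
$$\sum_{q=0}^{m}(-1)^{q}\bigl[\bigwedge\nolimits^{m-q}E\otimes S^{q}E\otimes A\bigr]\equiv 0\pmod{\text{admissible boundaries}},\qquad m>0,$$
which encodes the acyclicity of the Koszul complex. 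Combined with the transposition isomorphisms $\bigwedge^{q}E\otimes S^{p}E\cong S^{p}E\otimes\bigwedge^{q}E$, which can be inserted freely at the cost of an admissible $s_i$, this allows each mixed summand $\bigwedge^{i}E\otimes S^{p}E\otimes\bigwedge^{k-i-p}E$ appearing in the expansion of $\bigwedge^iE\otimes\Psi^{k-i}(E)$ to be rewritten as a combination of the canonical summands $S^{p}E\otimes\bigwedge^{k-p}E$ of $\Psi^{k}(E)$, up to admissible boundaries.

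The main obstacle is the combinatorial bookkeeping: one must check that the factors $(-1)^{i-1}$ from Newton's identity, $(-1)^{k-i-p+1}(k-i-p)$ from expanding $\Psi^{k-i}(E)$, and the signs and multiplicities introduced by the Koszul truncations and transpositions all conspire to give precisely $R$. A convenient way to organize this is to group contributions by the pair $(m,p)$ and to use the splitting principle as a \emph{guide}: on a split bundle $E=L_1\oplus\cdots\oplus L_n$ both $\psi^k(E)$ and $\Psi^k(E)$ reduce to $\sum_i L_i^{\otimes k}$ by an explicit symmetric-function identity, and the splittings of the relevant Koszul complexes provided by \eqref{koszulsum} allow one to translate the symmetric-function cancellations into admissible boundaries and transpositions for a general $E$.
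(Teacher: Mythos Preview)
Your overall strategy---induction on $k$ via Newton's identity, combined with the acyclicity of the Koszul complex---is the same one the paper uses. However, two points deserve attention.

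First, the sentence ``$\bigwedge^iE\otimes A_j^{(i)}$ is still of type $T^{j}E$ or $\bigwedge^{j}E$ up to an additional transposition isomorphism'' is not correct: a tensor product of two exterior powers of $E$ is in general neither an exterior power nor a tensor power of $E$, and the only transpositions the statement allows are $\bigwedge^{k-p}E\otimes S^pE\cong S^pE\otimes\bigwedge^{k-p}E$. In the paper's proof the $A_i$ that actually arise are iterated tensor products $\bigwedge^{i_1}E\otimes\cdots\otimes\bigwedge^{i_l}E$, so the statement is being read a bit loosely there too; but you should not claim a reduction to literal $T^jE$ or $\bigwedge^jE$, only that the $A_i$ remain bundles of this tensor-of-exterior-powers shape (which is all that is needed in the subsequent application to Bott--Chern forms).

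Second, and more substantially, you stop precisely where the work is: you identify $R$ and then defer the verification to ``combinatorial bookkeeping,'' proposing the splitting principle as a heuristic. The splitting principle confirms that $R=0$ in $K_0$, but it does not by itself produce the explicit short exact sequences of the required shape. The paper sidesteps your proposed direct manipulation of the triple products $\bigwedge^iE\otimes S^pE\otimes\bigwedge^{k-i-p}E$ by first proving, via a separate induction using the Koszul complex $\Psi^{k+1}(E)^{t*}$, the closed formula
\[
S^kE \;\simeq\; \sum_{(i_1,\dots,i_l)\in L_k}(-1)^{l+k}\,\bigwedge\nolimits^{i_1}E\otimes\cdots\otimes\bigwedge\nolimits^{i_l}E,
\]
where $L_k$ is the set of ordered partitions of $k$. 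Substituting this into $\Psi^k(E)=\sum_p(-1)^{k-p+1}(k-p)\,S^pE\otimes\bigwedge^{k-p}E$ (after one transposition) turns the verification of Newton's recursion for $\Psi^k$ into a short reindexing argument, with no triple products to manage. This intermediate formula is the missing idea in your outline; once you have it, the combinatorics you were worried about become a two-line computation rather than an open-ended bookkeeping exercise.
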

\begin{proof}
  Consider the polynomial relating the lambda and Adams operations:
\begin{equation}\label{psi}\psi^k= \psi^{k-1}\lambda^1 -\psi^{k-2}\lambda^2 + \dots + (-1)^{k-1} k \lambda^k.
\end{equation}

 For two linear combinations of  locally free sheaves of finite rank, write $A \simeq B$ if there exist  short exact sequences $s_1,\dots,s_l$, in the form of the statement of the proposition, such that
$$A -B =\sum_{i=1}^l d(s_i).$$

Let $L_k=\{(i_1,\dots,i_l)|\ i_j\in \{1,\dots,k\}, i_1+\dots+i_l=k\}$ be the set of partitions of $k$ and let $S^k$ denote the symmetric product. We will show, first of all, that
$$S^k\simeq \sum_{(i_1,\dots,i_l)\in L_k } (-1)^{l+k} \lambda^{i_1}\dots \lambda^{i_l}.$$
We prove this result by induction on $k$. For $k=1$, the statement is obvious since $S^1=\lambda^1=id$.
Assume that the result is true up to $k$.
Considering the $(k+1)$-th Koszul complex $\Psi^{k+1}(E)^{t*}$, we have
$$S^{k+1} \simeq  \sum_{i=1}^{k+1} (-1)^{i+1} \lambda^i\otimes S^{k+1-i}.$$
By induction hypothesis,
\begin{eqnarray*}
S^{k+1} & \simeq  & \ \sum_{i=1}^{k+1} (-1)^{i+1} \lambda^i\otimes \left(\sum_{(i_1,\dots,i_l)\in L_{k+1-i} } (-1)^{l+k+1-i} \lambda^{i_1}\dots \lambda^{i_l}  \right) \\
& \simeq & \sum_{(i,i_1,\dots,i_l)\in L_{k+1} } (-1)^{(l+1)+(k+1)}  \lambda^i  \lambda^{i_1}\dots \lambda^{i_l}
\end{eqnarray*}
as desired.

Next, observe that by definition $\Psi^1=\psi^1$. Hence, if we show that $\Psi^k$ satisfies the recursive formula \eqref{psi}, up to short exact sequences of the desired form, we are done.
By definition,
\begin{eqnarray*}
\Psi^k(E) &=&  \sum_{p\geq 0} (-1)^{k-p+1} (k-p) S^p(E) \otimes \bigwedge\nolimits^{k-p}E \\ &\simeq & \sum_{p\geq 0} (-1)^{k-p+1} (k-p)\lambda^{k-p}E\otimes S^pE.
\end{eqnarray*}
Hence, using the previous relation for $S^k$, we have (omitting the writing of the locally free sheaf $E$):
\begin{eqnarray*}
  \Psi^k &\simeq & \sum_{p\geq 0} (-1)^{k-p+1} (k-p)\lambda^{k-p}\otimes  \left(
  \sum_{(i_1,\dots,i_l)\in L_p } (-1)^{l+p} \lambda^{i_1}\dots \lambda^{i_l}
  \right) \\
  &\simeq & \sum_{p\geq 0} \sum_{(i_1,\dots,i_l)\in L_p }(-1)^{k+l+1} (k-p)\lambda^{k-p} \lambda^{i_1}\dots \lambda^{i_l}
  \\ &\simeq &\sum_{(i_1,\dots,i_{l+1})\in L_{k}}(-1)^{k+l+1} i_1\lambda^{i_1} \lambda^{i_2}\dots \lambda^{i_{l+1}}
  \\ & \simeq & (-1)^{k+1} k \lambda^k  + \sum_{s=1}^{k-1} (-1)^s \left(\sum_{(i_1,\dots,i_{l})\in L_{k-s} }(-1)^{k+l+1-s} i_1\lambda^{i_1} \dots \lambda^{i_{l}} \right)\lambda^s \\
  & \simeq & (-1)^{k+1} k \lambda^k  - \sum_{s=1}^{k-1} (-1)^s \Psi^{k-s}\lambda^s.
  \end{eqnarray*}
  This finishes the proof of the proposition.
\end{proof}

\begin{rem}
  Note that last proposition applies to any suitable exact category where Grayson's Adams operations are defined, that is, where the correct notion of symmetric, exterior and tensor product is available (see \cite{Grayson1}).
  \end{rem}

Let $X$ be a proper arithmetic variety over $\Z$. Let
$$\Psi^k : \mmD^{2p-*}(X,p) \rightarrow \mmD^{2p-*}(X,p)$$
be the morphism that maps $\alpha$ to $k^p \alpha.$ That is, we
endow $\bigoplus_{p\geq 0}\mmD^{2p-*}(X,p)$ with the canonical
$\lambda$-ring structure  corresponding
to the graduation given by $p$.

\begin{prp}\label{gs} Let $X$ be a proper arithmetic variety and
let $\overline{E}$ be a hermitian vector bundle over $X$. Then,
$$\Psi^k\ch(\overline{E})=\ch\Psi^k(\overline{E})$$ in the group $\bigoplus_{p\geq
0}\mmD^{2p}(X,p)$.
\end{prp}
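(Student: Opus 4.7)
The plan is to route the form-level identity through the classical Newton-polynomial Adams operation $\psi^k(E) := N_k(\lambda^1 E,\ldots,\lambda^k E)$, viewed as a formal $\Z$-linear combination of hermitian bundles built from $\overline{E}$ by exterior powers and tensor products with the naturally induced hermitian metrics. The idea is to prove separately that (a) $\ch(\psi^k(\overline{E})) = \Psi^k \ch(\overline{E})$ at the form level, and (b) $\ch(\psi^k(\overline{E})) = \ch(\Psi^k(\overline{E}))$ at the form level, and then combine.

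For (a), I would appeal to standard Chern--Weil theory: via the splitting principle, the identity reduces to the Newton symmetric function relation $p_k = N_k(e_1,\ldots,e_k)$, using that $\ch$ is additive on orthogonal direct sums and multiplicative on tensor products of hermitian bundles and that $\ch(\overline{L}^{\otimes k}) = e^{k c_1(\overline{L})}$ for hermitian line bundles.

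For (b), I would invoke Proposition \ref{psicomp} to write
\[
\psi^k(E) - \Psi^k(E) = \sum_{i=1}^r d(s_i)
\]
in the free abelian group on locally free sheaves, where each $s_i$ is either a short exact sequence of the form $\mu^p(\Psi^{k_i}(E)^{t*})\otimes A_i$ (with $A_i$ equal to some $\bigwedge^{j_i} E$ or $T^{j_i} E$), or a canonical swap isomorphism $\bigwedge^{k-p}E \otimes S^pE \cong S^pE \otimes \bigwedge^{k-p}E$. Applying $\ch$ additively, it suffices to show $\ch(d(s_i)) = 0$ at the form level for each $i$. For the swap type, the canonical symmetry $V\otimes W \to W\otimes V$ is an isometry for tensor-product metrics, so $\ch$ assigns the same form to both sides and $\ch(d(s_i))=0$. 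For the Koszul type, $\ch(d(s_i)) = -d_{\mmD}\ch(s_i)$ by the chain-morphism property of $\ch$; the swap $\Psi^{k_i}(\overline{E})^* \cong \Psi^{k_i}(\overline{E})^{t*}$ is isometric, so Corollary \ref{chkoszul} gives $d_{\mmD}\ch(\mu^p(\Psi^{k_i}(\overline{E})^{t*})) = 0$; finally, tensoring a 1-cube by a hermitian bundle $A_i$ multiplies its Bott--Chern form by $\ch(A_i)$, so $d_{\mmD}\ch(\mu^p(\Psi^{k_i}(\overline{E})^{t*})\otimes A_i) = d_{\mmD}\ch(\mu^p(\Psi^{k_i}(\overline{E})^{t*}))\wedge \ch(A_i) = 0$.

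The main obstacle is that Corollary \ref{chkoszul} (and the underlying Proposition \ref{koszulvanish}) requires the Koszul differentials to be rescaled by $1/\sqrt{k}$; without this adjustment the short exact sequences $\mu^j(\Psi^k(\overline{E})^*)$ do not split isometrically and their Bott--Chern forms fail to be $d_{\mmD}$-closed. This is precisely the role of the $\lambda_k$-modification introduced in Remark \ref{modadams}: it ensures that the specific 1-cubes appearing as $s_i$ in Proposition \ref{psicomp} have vanishing contribution to the Chern character form, which is exactly what is needed to upgrade the $K_0$-level identity $\psi^k = \Psi^k$ to the sharp form-level equality stated in the proposition.
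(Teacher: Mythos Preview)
Your proposal is correct and follows essentially the same route as the paper: part (a) is exactly the Gillet--Soul\'e identity $\psi^k\ch=\ch\psi^k$ together with $\Psi^k\ch=\psi^k\ch$ on forms, and part (b) is precisely the combination of Proposition~\ref{psicomp} with Corollary~\ref{chkoszul}, where you have helpfully spelled out the swap-isometry and tensor-by-$A_i$ reductions that the paper leaves implicit. One small inaccuracy in your closing remark: Corollary~\ref{chkoszul} already asserts $d_{\mmD}\ch(\mu^j(\Psi^k(\overline{E})^*))=0$ for the \emph{unmodified} Koszul complex (the $\lambda_k$-rescaling enters only in its proof, making the Bott--Chern form vanish rather than merely $d_{\mmD}$-closed), so the $s_i$ of Proposition~\ref{psicomp} need no rescaling and your worry there is unfounded.
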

\begin{proof} In \cite[Lemma 7.3.3]{GilletSouleClassesII} Gillet and Soul{\'e}
proved that
$\lambda^k\ch = \ch \lambda^k$ from where it follows that
$$\psi^k(\ch(\overline{E}))=\ch(\psi^k(\overline{E})). $$
Observe that by definition,
$$\Psi^k(\ch(\overline{E}))=\psi^k(\ch(\overline{E})).$$

By proposition \ref{psicomp}, there are short exact sequences $\overline{s}_i$ being $\mu^p(\Psi^{k_i}(\overline{E})^{t*})\otimes \overline{A}_i,$ with $\overline{A}_i$ some locally free sheaves of the form $\bigwedge\nolimits^{j_i}{\overline{E}}$ or $T^{j_i}\overline{E}$ and some indices $k_i,p,j_i$, or $\overline{s}_i$ being the canonical isomorphisms $\bigwedge\nolimits^{k-p}\overline{E}\otimes S^p\overline{E} \cong S^p\overline{E}\otimes \bigwedge\nolimits^{k-p}\overline{E}$, such that
$$\psi^k(\overline{E})-\Psi^k(\overline{E})=\sum_{i=1}^r d(\overline{s}_i).$$
These short exact sequences $\overline{s}_i$ are endowed with the hermitian metric induced by the hermitian metrics
$h_{\bigwedge\nolimits^*(\overline{E})},h_{T^*(\overline{E})}$ and $h_{\Psi^*_*(\overline{E})^t}$.
By Corollary \ref{chkoszul} it follows that
$$\ch(\psi^k(\overline{E}))-\ch(\Psi^k(\overline{E}))=\sum_{i=1}^r d_{\mmD}\ch(\overline{s}_i)=0$$
and hence
$$\ch(\Psi^k(\overline{E}))= \ch(\psi^k(\overline{E})) $$
and the proposition is proved.
\end{proof}

\paragraph{The Bott-Chern form of the Koszul complex.}
For any acyclic cochain complex of hermitian vector bundles,
$\overline{A}: 0 \rightarrow \overline{A}^0 \xrightarrow{f^0} \cdots \xrightarrow{f^{j-1}} \overline{A}^j  \xrightarrow{f^j}
\cdots \xrightarrow{f^{k-1}} \overline{A}^k \rightarrow 0,$
the Bott-Chern form of $\overline{A}$ is defined by
$$\ch(\overline{A})=\ch(\mu(\overline{A}))= \sum_{j\geq 0} (-1)^{j-1}\ch(\mu^j(\overline{A})).$$

\begin{prp}
  Let $X$ be a smooth complex variety and let $\overline{E}$ be a hermitian vector bundle. With the metrics on the Koszul complex
  $\Psi^k(\overline{E})^*$ induced by $h_{\bigwedge\nolimits^*(\overline{E})},h_{T^*(\overline{E})}$ and $h_{S^*(\overline{E})}$, we have
  $$\ch(\Psi^k(\overline{E})^*) = \frac{(-1)^{k+1}\log(k)}{2} \Psi^k(\ch(\overline{E}))$$
  in $\bigoplus_{p\geq 0}\mmD^{2p-1}(X,p)$.
\end{prp}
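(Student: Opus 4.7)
The plan is to exploit Proposition \ref{koszulvanish}, which guarantees that $\mu^j(\lambda_k\Psi^k(\overline{E})^*)$ has vanishing Bott--Chern form, and then to account for the difference between $\Psi^k(\overline{E})^*$ and its rescaled version $\lambda_k\Psi^k(\overline{E})^*$. For each $j\in\{0,\dots,k-1\}$, the short exact sequences $\mu^j(\Psi^k(\overline{E})^*)$ and $\mu^j(\lambda_k\Psi^k(\overline{E})^*)$ share the same hermitian vector bundles in their entries (the kernels carry the subbundle metrics and the middle term is $\Psi^k(\overline{E})^j$), and differ only in the rescaling of $\varphi_j$ by $1/\sqrt{k}$. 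Lifting the $3\times 3$-diagram computation used in the proof of Corollary \ref{chkoszul} from the level of $d_{\mmD}\ch$ to the level of $\ch$ itself, and discarding the trivial $1$-cubes $[0\to V\xrightarrow{\mathrm{id}}V\to 0]$ whose Bott--Chern forms vanish, I obtain
\begin{equation*}
\ch(\mu^j(\Psi^k(\overline{E})^*)) = -\ch(\ker \varphi_{j+1} \xrightarrow{\frac{1}{\sqrt{k}}} \ker \varphi_{j+1}),
\end{equation*}
where the right-hand side is the Bott--Chern form of the $1$-cube representing the scalar isomorphism with the same metric on source and target.

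The next step is to identify this scalar-rescaling Bott--Chern form. Directly from the Burgos--Wang integral $\ch_1(\bar F)=-\frac{1}{2\pi i}\int_{\P^1}\ch(\tr_1(\lambda(\bar F)))\wedge W_1$ applied to the $1$-cube $\bar F=[0\to V\xrightarrow{\frac{1}{\sqrt{k}}\mathrm{id}}V\to 0]$, the transgression bundle has an explicit description on $X\times\P^1$ which allows the integral to be computed in closed form; the result is the well-known secondary Bott--Chern form for a scalar rescaling of metric, namely $\ch(V\xrightarrow{\frac{1}{\sqrt{k}}\mathrm{id}}V) = \frac{\log k}{2}\ch(V)$. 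Combining with the previous display gives
\begin{equation*}
\ch(\mu^j(\Psi^k(\overline{E})^*)) = -\frac{\log k}{2}\,\ch(\ker\varphi_{j+1}).
\end{equation*}

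To conclude, the exactness of the Koszul complex (with $\ker\varphi_0=0$) yields inductively $\ch(\ker\varphi_{j+1})=\sum_{p=0}^j(-1)^{j-p}\ch(\Psi^k(\overline{E})^p)$. An elementary reordering of the summations then gives
\begin{equation*}
\sum_{j=0}^{k-1}(-1)^{j-1}\ch(\ker\varphi_{j+1}) \;=\; -\sum_{p=0}^{k}(-1)^{p}(k-p)\ch(\Psi^k(\overline{E})^p) \;=\; (-1)^{k}\,\Psi^k(\ch(\overline{E})),
\end{equation*}
where the last equality uses the definition of the secondary Euler characteristic class of the Koszul complex together with Proposition \ref{gs}. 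Putting everything together,
\begin{equation*}
\ch(\Psi^k(\overline{E})^*) = \sum_{j=0}^{k-1}(-1)^{j-1}\ch(\mu^j(\Psi^k(\overline{E})^*)) = -\frac{\log k}{2}\cdot(-1)^{k}\Psi^k(\ch(\overline{E})) = \frac{(-1)^{k+1}\log k}{2}\,\Psi^k(\ch(\overline{E})),
\end{equation*}
as claimed.

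The main obstacle is the sign-careful identification of the Bott--Chern form of the scalar-rescaling $1$-cube in the Burgos--Wang normalization; once this secondary class is pinned down, the rest of the argument is an essentially combinatorial manipulation of the Koszul resolution.
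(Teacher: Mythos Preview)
Your argument is correct and rests on the same three ingredients as the paper's proof: Proposition~\ref{koszulvanish} (vanishing of $\ch$ on $\mu^j(\lambda_k\Psi^k(\overline{E})^*)$), the Bott--Chern form of a scalar-rescaling $1$-cube (the content of Lemma~\ref{bcconstant}), and Proposition~\ref{gs}. The route, however, is organized differently. The paper compares $\Psi^k(\overline{E})^*$ to $\lambda_k\Psi^k(\overline{E})^*$ via a single global chain isomorphism whose component on $\Psi^k(\overline{E})^p$ is $\sqrt{k}^{\,k-p}\,\mathrm{Id}$; the Bott--Chern relation for this map of acyclic complexes gives
\[
\ch(\Psi^k(\overline{E})^*)=\ch(\lambda_k\Psi^k(\overline{E})^*)+\sum_{p=0}^k(-1)^{p+1}\ch\bigl(\sqrt{k}^{\,k-p}\,\mathrm{Id}_{\Psi^k(\overline{E})^p}\bigr),
\]
and the right-hand side is directly, via Lemma~\ref{bcconstant}, a constant times the secondary Euler characteristic $\sum_p(-1)^{k-p+1}(k-p)\ch(\Psi^k(\overline{E})^p)$. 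Your approach instead applies the local $3\times 3$ diagram of Corollary~\ref{chkoszul} to each $\mu^j$ separately, obtaining $\ch(\mu^j(\Psi^k(\overline{E})^*))$ in terms of $\ch(\ker\varphi_{j+1})$, and then uses the exactness of the Koszul complex together with Corollary~\ref{chkoszul} to expand $\ch(\ker\varphi_{j+1})$ as an alternating sum of $\ch(\Psi^k(\overline{E})^p)$ before summing over $j$. Both routes are valid; the paper's is shorter because the secondary Euler characteristic appears immediately from the scalars $\sqrt{k}^{\,k-p}$, whereas you recover it only after the reordering step.

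One point that deserves explicit acknowledgment in your write-up: the step you describe as ``lifting the $3\times 3$-diagram computation from the level of $d_{\mmD}\ch$ to $\ch$ itself'' is really the chain-morphism identity $\ch_1(dC)=d_{\mmD}\ch_2(C)$ applied to the $2$-cube $C$ built from the diagram, and so your displayed equality $\ch(\mu^j(\Psi^k(\overline{E})^*))=-\ch(\ker\varphi_{j+1}\xrightarrow{1/\sqrt{k}}\ker\varphi_{j+1})$ holds a priori modulo $d_{\mmD}\ch_2(C)$. The paper's corresponding step (``It follows that\dots'') is equally informal on this point, so your argument matches the paper's level of rigor; but you should not present the lift as automatic.
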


\begin{proof}
  Consider the commutative diagram of acyclic chain complexes (we omit the  hermitian vector bundle $\overline{E}$):
  {\small $$ \xymatrix@C=14pt@R=40pt{ 0 \ar[r] &  \bigwedge^{k}  \ar[r]^(0.4){\varphi_0} \ar[d]^{\sqrt{k}^k\Id} & S^{1}\otimes \bigwedge^{k-1}   \ar[r]^(0.65){\varphi_{1}} \ar[d]^{\sqrt{k}^{k-1}\Id} & \dots \ar[r] &  S^{p}  \otimes \bigwedge^{k-p}  \ar[r]^(0.65){\varphi_p}  \ar[d]^{\sqrt{k}^{k-p}\Id} &  \dots \ar[r] &  S^{k-1}  \otimes \bigwedge^{1} \ar[r]^(0.65){\varphi_{k-1}}  \ar[d]^{\sqrt{k}\Id}  & S^k \ar[r]\ar[d]^{\Id} & 0 \\ 0\ar[r] & \bigwedge^{k}  \ar[r]_(0.4){\frac{\varphi_0}{\sqrt{k}}}  & S^{1}\otimes \bigwedge^{k-1}   \ar[r]_(0.65){\frac{\varphi_1}{\sqrt{k}}} & \dots \ar[r] &  S^{p}  \otimes \bigwedge^{k-p}  \ar[r]_(0.65){\frac{\varphi_p}{\sqrt{k}}}   &  \dots \ar[r] &  S^{k-1}  \otimes \bigwedge^{1} \ar[r]_(0.65){\frac{\varphi_{k-1}}{\sqrt{k}}} & S^k \ar[r] & 0,  }  $$  }
  with the first row standing at degree $1$ and the second at degree $2$.
  Let $\ch(\sqrt{k}^{k-p}\Id\nolimits_{S^p\overline{E}\otimes \bigwedge\nolimits^{k-p}\overline{E}})$ denote
  $\ch(0\rightarrow S^p\overline{E}\otimes \bigwedge\nolimits^{k-p}\overline{E} \xrightarrow{\sqrt{k}^{k-p}\Id} S^p\overline{E}\otimes \bigwedge\nolimits^{k-p}\overline{E})$.
It follows that
\begin{eqnarray*}
  \ch(\Psi^k(\overline{E})^*) &=& \ch(\lambda_k(\Psi^k(\overline{E})^*)) +\sum_{p=0}^k (-1)^{p+1}\ch(\sqrt{k}^{k-p}\Id\nolimits_{S^p\overline{E}\otimes \bigwedge\nolimits^{k-p}\overline{E}})
  \end{eqnarray*}
  By Proposition \ref{koszulvanish}, we know that $\ch(\lambda_k(\Psi^k(\overline{E})^*))=0$. By Lemma \ref{bcconstant} below, we have that $\ch(a \Id_{\overline{F}})=-\log(a)\ch(\overline{F})$ for any hermitian vector bundle $\overline{F}$ and $a>0$ a real number. Therefore, we get
\begin{eqnarray*}
  \ch(\Psi^k(\overline{E})^*) &=& \sum_{p=0}^k (-1)^{p+1} \log(\sqrt{k}^{k-p})\ch(S^p\overline{E}\otimes \bigwedge\nolimits^{k-p}\overline{E}) \\
  &=& \sum_{p=0}^k (-1)^{p} (k-p)\log(\sqrt{k})\ch(S^p\overline{E}\otimes \bigwedge\nolimits^{k-p}\overline{E}) \\
  &=& (-1)^{k+1} \log(\sqrt{k}) \sum_{p=0}^k (-1)^{k-p+1} (k-p)\ch(S^p\overline{E}\otimes \bigwedge\nolimits^{k-p}\overline{E}) \\
  &=& (-1)^{k+1} \log(\sqrt{k}) \ch(\Psi^k(\overline{E})) = \frac{(-1)^{k+1}\log(k)}{2}\Psi^k(\ch(\overline{E})),
  \end{eqnarray*}
  where the last equality follows from Proposition \ref{gs}.

\end{proof}

\begin{lmm}\label{bcconstant} Let $X$ be a smooth complex variety, let $\overline{F}$ be a  hermitian vector bundle over $X$ and let $a$ be a non-zero complex number. Then,
  $$\ch(0\rightarrow \overline{F} \xrightarrow{ a\Id} \overline{F})= -\log(||a||)\ch(\overline{F})$$
  in $\bigoplus_{p\geq 0}\mmD^{2p-1}(X,p)$.
\end{lmm}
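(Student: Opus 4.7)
The strategy is a direct computation using the Burgos--Wang formula
$$\ch_1(\overline{\mathcal{A}}_a) \;=\; \frac{-1}{2\pi i}\int_{\P^1}\ch(\tr_1(\lambda(\overline{\mathcal{A}}_a)))\wedge W_1$$
applied to the 1-cube $\overline{\mathcal{A}}_a:=(0\to \overline{F}\xrightarrow{a\,\Id}\overline{F})$. Note that $\overline{\mathcal{A}}_a$ already has canonical kernels (the kernel of $a\,\Id$ is $0$), so $\lambda(\overline{\mathcal{A}}_a)=\overline{\mathcal{A}}_a$.

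First I would identify the transgression bundle. By Definition \ref{definiciotransgressio}, $\tr_1(\overline{\mathcal{A}}_a)$ is the kernel of $p_0^*\overline{F}(1)\oplus p_0^*\overline{F}(1)\to p_0^*\overline{F}(2)$, $(u,v)\mapsto a\,u\otimes x - v\otimes y$. Pulling the $p_0^*\overline{F}$ factor out, this factors as $p_0^*\overline{F}\otimes p_1^*\overline{\mathcal{L}}_a$ on $X\times \P^1$, where
$$\overline{\mathcal{L}}_a \;:=\; \ker\bigl(\mathcal{O}_{\P^1}(1)\oplus\mathcal{O}_{\P^1}(1) \xrightarrow{(ax,\,-y)} \mathcal{O}_{\P^1}(2)\bigr).$$
The section $(y,ax)$ is a nowhere-vanishing global generator, so $\mathcal{L}_a\cong\mathcal{O}_{\P^1}$ algebraically, and the Fubini--Study metric on $\mathcal{O}(1)$ induces on $\mathcal{L}_a\cong\mathcal{O}_{\P^1}$ the scalar hermitian metric $\phi_a(z)=(1+|a|^2|z|^2)/(1+|z|^2)$ in the affine coordinate $z=x/y$.

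By multiplicativity of the Chern form on tensor products and the projection formula, the claim reduces to establishing the scalar identity
$$\frac{-1}{2\pi i}\int_{\P^1}\ch(\overline{\mathcal{L}}_a)\wedge W_1 \;=\; -\log|a|.$$
Since $\dim_{\C}\P^1=1$ and $W_1=-\tfrac{1}{2}\log|z|^2$ has degree zero, only $c_1(\overline{\mathcal{L}}_a)=\tfrac{1}{2\pi i}\partial\bar\partial\log\phi_a$ contributes. Expanding $\log\phi_a = \log(1+|a|^2|z|^2)-\log(1+|z|^2)$ and applying the substitution $w=az$ to the first summand reduces both terms to the standard integral $\int_{\C}\log|w|^2\cdot\frac{dw\wedge d\bar w}{(1+|w|^2)^2}$; these two copies cancel, leaving only the logarithmic correction $-2\log|a|\cdot\int_{\C}\frac{dw\wedge d\bar w}{(1+|w|^2)^2}$. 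Combined with the classical evaluation of this last integral and the prefactor $\frac{-1}{2\pi i}$, one recovers $-\log|a|\cdot\ch(\overline{F})$.

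The main technical obstacle is the careful bookkeeping of conventions (signs, factors of $2\pi i$ and the degree shift in $\mmD^{*}(X,p)$) used by Burgos--Wang in the Deligne complex and the Fubini--Study metric; once these are fixed the calculation collapses to a single Möbius substitution and a standard integral on $\P^1$. No splitting principle or rank-reduction is needed, because the transgression bundle already factors cleanly as a tensor product $p_0^*\overline{F}\otimes p_1^*\overline{\mathcal{L}}_a$.
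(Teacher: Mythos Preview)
Your proposal is correct and follows essentially the same route as the paper: identify $\tr_1(\overline{\mathcal{A}}_a)$ as $p_0^*\overline{F}$ tensored with a trivial line bundle on $\P^1$ carrying the metric $\phi_a(z)=(1+|a|^2|z|^2)/(1+|z|^2)$, use multiplicativity of $\ch$ to factor out $\ch(\overline{F})$, and reduce to a scalar $\P^1$-integral involving $c_1$ of that line bundle against $W_1$. The only difference is that you spell out the evaluation of the final integral via the substitution $w=az$, whereas the paper simply asserts the value; both arrive at $-\log\lVert a\rVert\cdot\ch(\overline{F})$.
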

\begin{proof} Let $\ch(a\Id)$ denote $\ch(0\rightarrow \overline{F} \xrightarrow{ a\Id} \overline{F})$.
Consider $t=x/y$ the local coordinates on $\P^1$. By definition,
 $$\ch(a\Id)= \frac{1}{2\pi i} \int_{\P^1} \ch(\tr\nolimits_1(a\Id)) \wedge \big(\frac{1}{2} \log t\bar{t}\big).$$
 Let $h$ denote the hermitian metric of $\overline{F}$.
 Let $\xi$ be a local frame for $F$ on an open set $U$. By \cite{BurgosKuhnKramer2}, the determined local frame on $\tr\nolimits_1(a\Id)\cong p_0^*\overline{F}$
 has metric given by the matrix
  $$\frac{y\bar{y} h(\xi) + x\bar{x} ||a||^2 h(\xi) }{y\bar{y}+x\bar{x}}= \frac{y\bar{y} + x\bar{x} ||a||^2  }{y\bar{y}+x\bar{x}} h(\xi)=
\frac{ 1 + ||a||^2  t\bar{t}  }{1+t\bar{t}} h(\xi)$$
  using local projective  coordinates. Let $h_a=\frac{ 1 + ||a||^2  t\bar{t}  }{1+t\bar{t}}$.

Let $p_0,p_1: X\times \P^1 \rightarrow \P^1$ be the projections onto the first and second coordinates respectively. Since the hermitian metric on $\tr\nolimits_1(a\Id)$ is expressed as the product of a hermitian metric on the line bundle $\mathcal{O}(1)$ and the hermitian metric $h$ of $\overline{F}$, we have
$$\ch(\tr\nolimits_1(a\Id))= \ch(p_1^*\mathcal{O}(1),h_a)\wedge \ch(p_0^*\overline{F}).$$
Hence,
\begin{eqnarray*}
\ch(a\Id) & = & \frac{1}{2\pi i} \Big(\int_{\P^1} \ch(p_1^*\mathcal{O}(1),h_a) \wedge \big(\frac{1}{2} \log t\bar{t}\big) \Big)\wedge \ch(\overline{F})
\\ &=& \frac{1}{2\pi i}  \Big(\int_{\P^1} c_1(p_1^*\mathcal{O}(1),h_a) \wedge \big(\frac{1}{2} \log t\bar{t}\big)\Big)\wedge \ch(\overline{F})
\\ &=& \frac{-\log(||a||^2)}{2}  \ch(\overline{F})= -\log(||a||)\ch(\overline{F}).
\end{eqnarray*}

\end{proof}

\subsection{Adams operations and the Beilinson regulator}

We will define the Adams operations on the rational higher arithmetic
$K$-groups of $X$ from a commutative diagram of the form
\begin{equation}\label{adamsarith} \xymatrix{
N \widehat{C}_*(X) \ar[r]^(0.37){\ch} \ar[d]_{\Psi^{k}}   & \bigoplus_{p\geq 0} \mmD^{2p-*}(X,p) \ar[d]^{\Psi^k} \\
\wZ \widehat{C}_*^{\widetilde{\P}}(X) \ar[r]_(0.37){\ch} & \bigoplus_{p\geq 0}
\mmD^{2p-*}(X,p). }
\end{equation}
We proceed as follows:
\begin{enumerate}[(1)]
\item  We first define the bottom arrow $\ch:\wZ
\widehat{C}_*^{\widetilde{\P}}(X) \rightarrow \bigoplus_{p\geq 0}
\mmD^{2p-*}(X,p)$. \item We show that there are isomorphisms
\begin{eqnarray*}
H_n(s(\hch),\Q) & \cong & \widehat{K}_n(X)_{\Q}, \\
\widehat{H}_n(\wZ \widehat{C}_*^{\widetilde{\P}}(X),\ch)_{\Q} &
\cong &  \widehat{K}^{T}_n(X)_{\Q},
\end{eqnarray*}
with $\hch$ the composition $$\wZ
\widehat{C}_*^{\widetilde{\P}}(X) \xrightarrow{\ch}
\bigoplus_{p\geq 0} \mmD^{2p-*}(X,p)\rightarrow \bigoplus_{p\geq
0} \sigma_{>0}\mmD^{2p-*}(X,p) .$$
 \item We prove that the diagram
\eqref{adamsarith} is commutative.
\end{enumerate}

Let $\overline{E} \in \widehat{C}_n(X\times (\P^1)^m)$ be a
hermitian $n$-cube on $X\times (\P^1)^m$. We define
$$\ch\nolimits_{n,m}(\overline{E}):= \frac{(-1)^{n(m+1)}}
{(2\pi i)^{n+m}} \int_{(\P^1)^{n+m}}
\ch(\tr\nolimits_n(\lambda(\overline{E})))\bullet W_{n+m}\in
\bigoplus_{p\geq 0} \mmD^{2p-n-m}(X,p).$$

\begin{prp}\label{ch}
There is a chain morphism
$$\ch:  \wZ \widehat{C}^{\widetilde{\P}}_*(X) \rightarrow \bigoplus_{p\geq 0}
\mmD^{2p-*}(X,p), $$ which maps $\overline{E} \in
\widehat{C}_n(X\times (\P^1)^m)$
 to $\ch\nolimits_{n,m}(\overline{E})$. The composition
$$ K_n(X)  \rightarrow H_n(\wZ \widehat{C}^{\widetilde{\P}}_*(X),\Q)
\xrightarrow{\ch} \bigoplus_{p\geq 0} H_{\mmD}^{2p-n}(X,p),$$ is the
Beilinson regulator.

\end{prp}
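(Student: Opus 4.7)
The plan is to reduce the proposition to the Burgos--Wang construction recalled in Section 1 applied to the product scheme $X\times(\P^1)^m$, composed with the quasi-isomorphism $\varphi$ of Proposition \ref{qinverse}. Indeed, for $\overline{E}\in\widehat{C}_n(X\times(\P^1)^m)$, the form $\ch(\tr_n(\lambda(\overline{E})))$ lives naturally in $\bigoplus_{p}\wmD^{2p-n}_\P(X\times(\P^1)^m,p)$, and $\ch_{n,m}(\overline{E})$ is precisely its image under the integration $\varphi$ against $W_{n+m}$. So the strategy splits into (i) the chain-morphism property in both directions of the $2$-iterated complex, (ii) vanishing on the degenerate subcomplex, and (iii) identification with the Beilinson regulator.

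Compatibility with the cubical differential $d$ is immediate from the Burgos--Wang chain morphism statement (recalled just before Section 2) applied on the scheme $X\times(\P^1)^m$, post-composed with the chain morphism $\varphi$. For the cocubical differential $\delta=\sum_{i,j}(-1)^{i+j}\delta^i_j$, the key observation is that the constructions $\lambda$ and $\tr_n$ are functorial under base change along the coface maps $\delta^i_j:X\times(\P^1)^{m-1}\hookrightarrow X\times(\P^1)^m$ coming from the cocubical structure of $(\P^1)^{\cdot}$, so that restriction of the Chern form commutes with these inclusions. Combining this with the Fubini identity that isolates the relevant projective-line variable in the integration against $W_{n+m}$, one obtains the desired chain identity for $\delta$.

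The main obstacle is the vanishing on the degenerate subcomplex $\wZ \widehat{C}^{\P}_{n,m}(X)_{\deg}$. For a cube of the form $\sigma_i(\overline{F})$ with $\overline{F}$ a cube on $X\times(\P^1)^{m-1}$, the form $\ch(\tr_n(\lambda(\sigma_i\overline{F})))$ is pulled back from $X\times(\P^1)^{n+m-1}$ along $\sigma^i$, hence represents an element of $D^r_{n+m}$. For a cube of the form $p_i^*\mathcal{O}(1)\otimes\sigma_i(\overline{F})$, the multiplicativity of the Chern character together with the identification $\ch(p_i^*\overline{\mathcal{O}(1)})=e^{\omega_i}$ shows that the resulting form lies in $D^r_{n+m}+\mmW^r_{n+m}$. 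In either case, Proposition \ref{qinverse} implies that the integration $\varphi$ vanishes on this subcomplex, so $\ch_{n,m}$ descends to the quotient $\wZ \widehat{C}^{\widetilde{\P}}_{n,m}(X)$. This is essentially a bookkeeping computation but is the most delicate point, since it requires matching the two notions of degeneracy on cubes and on differential forms.

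For the second statement, the natural inclusion $\wZ\widehat{C}_*(X)=\wZ\widehat{C}^{\widetilde{\P}}_{*,0}(X)\hookrightarrow \wZ\widehat{C}^{\widetilde{\P}}_*(X)$ induces an isomorphism on rational homology by Proposition \ref{dold}. By construction $\ch_{n,0}$ coincides with the Burgos--Wang chain morphism $\ch$ of \eqref{ch3}, so the two composites
$$K_n(X)_\Q\xrightarrow{\Cub}H_n(\wZ\widehat{C}_*(X),\Q)\xrightarrow{\ch}\bigoplus_{p}H^{2p-n}_\mmD(X,\R(p))$$
and the analogous one through $\wZ\widehat{C}^{\widetilde{\P}}_*(X)$ agree. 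The Burgos--Wang theorem recalled in Section 1 identifies the former with the Beilinson regulator, which finishes the proof.
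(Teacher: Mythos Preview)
Your overall strategy matches the paper's: factor $\ch$ through an intermediate map into the projective-line complex of differential forms and then compose with the quasi-isomorphism $\varphi$ of Proposition~\ref{qinverse}. Your treatment of the degenerate subcomplex and of the Beilinson regulator statement is essentially identical to the paper's.

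There is, however, a genuine inconsistency in your factorization that undermines the chain-morphism argument. You place $\ch(\tr_n(\lambda(\overline{E})))$ in $\wmD_\P^{2p-n}(X\times(\P^1)^m,p)$ but then describe $\ch_{n,m}$ as its image under ``$\varphi$ against $W_{n+m}$''. These are incompatible: the map $\varphi$ relative to the base $X\times(\P^1)^m$ integrates against $W_n$ over the $n$ transgression lines only and lands in $\mmD^*(X\times(\P^1)^m,p)$, which is not $\ch_{n,m}$. The paper instead defines $\och_{n,m}(\overline{E})=(-1)^{n(m+1)}\ch(\tr_n(\lambda(\overline{E})))$ as an element of $\wmD^{2p}(X\times(\P^1)^{n+m},p)\subset \wmD_\P^*(X,p)$, treating \emph{all} $n+m$ projective lines as cubical variables, and applies the $\varphi$ relative to $X$. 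Once the factorization is corrected this way, your appeal to ``Burgos--Wang on $X\times(\P^1)^m$'' for the $d$-direction no longer applies, since that produces integration against $W_n$ rather than $W_{n+m}$, and a double integration $\int_{(\P^1)^m}(\int_{(\P^1)^n}\cdot\bullet W_n)\bullet W_m$ is not the same as $\int_{(\P^1)^{n+m}}\cdot\bullet W_{n+m}$.

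The paper's route from here is a single direct computation of $d_s(\och_{n,m}(\overline{E}))$ in $\wmD_\P^*(X,p)$: the Chern form is $d_\mmD$-closed, so only the $\delta$-part of the simple differential survives, and restriction along each of the $n+m$ lines splits into the cocubical $\delta$ on the first $m$ (yielding $\delta\overline{E}$) and the transgression restriction identities on the last $n$ (yielding $d\overline{E}$). This handles both differentials at once and makes explicit the role of the sign $(-1)^{n(m+1)}$, which you do not address but which is exactly what reconciles the two pieces with the simple-complex differential on the source.
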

\begin{proof}
First of all, observe that the map ``$\ch$'' is well defined. Indeed, if
$\overline{E}=p_i^*\overline{\mathcal{O}(1)}\otimes \overline{F}$,
then $\ch(\overline{E})\in \sigma_i\mmD^{2p-*}(X\times
(\P^1)^{n-1},p-1)+\omega_i\wedge \sigma_i\mmD^{2p-*-2}(X\times
(\P^1)^{n-1},p-1)$ and hence $\ch(\overline{E})=0$.

In order to prove that the map ``$\ch$'' is a chain morphism, observe that
``$\ch$'' factors as
$$ \wZ \widehat{C}^{\widetilde{\P}}_{n,m}(X)  \xrightarrow{\och}
\bigoplus_{p\geq 0}\wmD^{2p-n,m}_{\P}(X,p) \xrightarrow{\varphi}
\bigoplus_{p\geq 0} \mmD^{2p-n-m}(X,\R(p)),$$ where $\varphi$
is the quasi-isomorphism of Proposition \ref{qinverse} and
$\och(\overline{E})= \och_{n,m}(\overline{E})$ is defined by
$$\och_{n,m}(\overline{E})=(-1)^{n(m+1)}\ch(\tr\nolimits_n(\lambda(\overline{E})))\in
\bigoplus_{p\geq 0}\wmD^{2p}(X\times (\P^1)^{m+n},p) ,$$ for any
$\overline{E}\in \widehat{C}_n(X\times (\P^1)^m)$. Hence, it is
enough to see that $\och$ is a chain morphism.

Let $\overline{E} \in \wZ \widehat{C}_{n,m}(X)$. Since
$\ch$ is a closed differential form, we have
\begin{eqnarray*}
d_{s}(\och_{n,m}(\overline{E})) &=& (-1)^{nm} \delta
\ch(\tr\nolimits_n(\lambda(\overline{E}))) \\ &=& \sum_{i=1}^m
(-1)^{i+nm}
  \ch(\tr\nolimits_n(\lambda(\delta_{i}^1\overline{E}-\delta_{i}^0\overline{E})))\\ &&+
  \sum_{i=m+1}^{n+m}\sum_{j=0}^2(-1)^{i+j+nm} \ch(
\tr\nolimits_n(\lambda(\partial_{i-m}^j\overline{E})))
\\ &=& (-1)^n \och_{n,m-1}(\delta \overline{E}) + \och_{n-1,m}(d \overline{E}),
\end{eqnarray*}
as desired.

Finally, since by definition there is a commutative diagram
$$\xymatrix@R=6pt{  \wZ \widehat{C}_*(X) \ar[dd]_{i}^{\cong}  \ar[drr]^(.4){\ch} \\ && \bigoplus\limits_{p\geq 0}
\mmD^{2p-*}(X,p), \\
 \wZ \widehat{C}^{\widetilde{\P}}_*(X)\ar[urr]_(.4){\ch}}
 $$ the morphism $\ch$ induces the Beilinson
regulator.
\end{proof}

We have therefore constructed the bottom arrow of diagram
\eqref{adamsarith}. For the next proposition, let $\hch:\wZ
\widehat{C}_*^{\widetilde{\P}}(X) \rightarrow \bigoplus_{p\geq 0}
\sigma_{>0}\mmD^{2p-*}(X,p)$ be the composition of the morphism defined in
 Proposition \ref{ch} with the natural projection $
\bigoplus_{p\geq 0} \mmD^{2p-*}(X,p) \rightarrow \bigoplus_{p\geq
0} \sigma_{>0}\mmD^{2p-*}(X,p)$.

\begin{prp}\label{cubes2}
There are isomorphisms
\begin{eqnarray*}
\widehat{H}_n(\wZ \widehat{C}_*^{\widetilde{\P}}(X),\ch)_{\Q} &
\cong &  \widehat{K}^{T}_n(X)_{\Q},
\\ H_n(s(\hch),\Q) & \cong & \widehat{K}_n(X)_{\Q},
\end{eqnarray*}
induced by the isomorphism  $H_n(\wZ
C_*^{\widetilde{\P}}(X),\Q)\cong K_n(X)_{\Q}$  of Proposition
\ref{dold}.
\end{prp}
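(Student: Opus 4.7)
The plan is to transfer both isomorphisms from the corresponding results for $\wZ\widehat{C}_*(X)$ via the natural map
\[
\iota\colon \wZ\widehat{C}_*(X) = \wZ\widehat{C}_{*,0}^{\widetilde{\P}}(X) \hookrightarrow \wZ\widehat{C}_*^{\widetilde{\P}}(X),
\]
which is a rational quasi-isomorphism by Proposition~\ref{dold}. The key observation is that the old and new representatives of the Chern character fit in a commutative square
\[
\xymatrix{
\wZ\widehat{C}_*(X) \ar[r]^(0.37){\ch} \ar[d]_{\iota} & \bigoplus_{p\geq 0}\mmD^{2p-*}(X,p) \ar[d]^{=} \\
\wZ\widehat{C}_*^{\widetilde{\P}}(X) \ar[r]_(0.37){\ch} & \bigoplus_{p\geq 0}\mmD^{2p-*}(X,p),
}
\]
since on the $m=0$ component the integral formula defining $\ch_{n,0}$ specializes to the formula \eqref{ch3}; this is the factorization recorded at the end of the proof of Proposition~\ref{ch}.

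For the first isomorphism I would tensor this square with $\Q$ and apply Proposition~\ref{arithlong}(iii): the left vertical is now a quasi-isomorphism, the right vertical is the identity, and hence the induced map
\[
\widehat{H}_n(\wZ\widehat{C}_*(X),\ch)_{\Q} \xrightarrow{\cong} \widehat{H}_n(\wZ\widehat{C}_*^{\widetilde{\P}}(X),\ch)_{\Q}
\]
is an isomorphism commuting with $\zeta$. Composing with Theorem~\ref{arithrational} yields the desired identification $\widehat{K}^T_n(X)_{\Q} \cong \widehat{H}_n(\wZ\widehat{C}_*^{\widetilde{\P}}(X),\ch)_{\Q}$, compatible with the isomorphism $H_n(\wZ\widehat{C}_*^{\widetilde{\P}}(X),\Q) \cong K_n(X)_{\Q}$ of Proposition~\ref{dold} by the functoriality built into Proposition~\ref{arithlong}(ii) and the $\zeta$-compatibility diagram at the end of Theorem~\ref{arithrational}.

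For the second isomorphism, postcompose the right column of the square above with the canonical projection $\bigoplus_p\mmD^{2p-*}(X,p)\to\bigoplus_p\sigma_{>0}\mmD^{2p-*}(X,p)$; the resulting square is still commutative with the right column an isomorphism and the left column a rational quasi-isomorphism. Applying the five lemma to the rationalized long exact sequences of the associated simple complexes shows that the induced morphism of simples is a rational quasi-isomorphism, so that $H_n(s(\hch),\Q)$ computed via $\wZ\widehat{C}_*^{\widetilde{\P}}(X)$ agrees with the one computed via $\wZ\widehat{C}_*(X)$; by Proposition~\ref{Delignesouleprops}(ii) the latter is $\widehat{K}_n(X)_{\Q}$. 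No serious obstacle is anticipated: the whole argument reduces to the commutativity of the square above, which is immediate from unwinding the definitions of $\ch_n$ and $\ch_{n,0}$.
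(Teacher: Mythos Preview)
Your proposal is correct and follows essentially the same approach as the paper's own proof, which simply invokes Proposition~\ref{dold}, the exact sequences of Proposition~\ref{arithlong}, Proposition~\ref{Delignesouleprops}, and the five lemma. You have spelled out the argument in more detail---in particular making explicit the commutative square relating the two Chern character maps and, for the first isomorphism, invoking Proposition~\ref{arithlong}(iii) directly rather than re-deriving it via the five lemma---but the substance is the same.
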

\begin{proof}
Both isomorphisms are a consequence of Proposition \ref{dold},
and the five lemma using the exact sequences   of Lemma
\ref{arithlong} and Proposition \ref{Delignesouleprops}.
\end{proof}

At this point, all that remains to see is that the diagram
\eqref{adamsarith}  is commutative. This will be a consequence of
the next series of lemmas and propositions.

The next lemma tells us that the morphism ``$\ch$'' maps the
split exact sequences to zero in the complex $ \bigoplus_{p\geq
0}\wmD_{\P}^{2p-*}(X,p)$.

\begin{lmm}\label{chzero} Let $X$ be a smooth proper complex variety.
Consider a split exact sequence $$\overline{E}: 0\rightarrow
\overline{E}^0\rightarrow \overline{E}^0\oplus \overline{E}^1
\rightarrow \overline{E}^1 \rightarrow 0
$$
of hermitian vector bundles over $X$. Then, in the complex $
\bigoplus_{p\geq 0}\wmD_{\P}^{2p-*}(X,p)$, it holds
$\ch(\overline{E})=0.$
\end{lmm}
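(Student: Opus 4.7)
The plan is to identify $\tr_1(\overline{E})$ isometrically with an explicit orthogonal direct sum on $X \times \P^1$, and then show the resulting Chern form decomposes into a piece pulled back from $X$ (hence in $D_1$) and a piece of the form $\omega_1 \wedge (\text{pullback})$ (hence in $\mathcal{W}_1$). Since the inclusion $\overline{E}^0 \hookrightarrow \overline{E}^0 \oplus^\bot \overline{E}^1$ realizes $\overline{E}^0$ as a subbundle with the metric induced from the middle term, the hermitian $1$-cube $\overline{E}$ already has canonical kernels, so $\lambda(\overline{E}) = \overline{E}$ and it suffices to analyze $\ch(\tr_1(\overline{E}))$.

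By Definition \ref{definiciotransgressio}, $\tr_1(\overline{E})$ is the kernel of the map $(\overline{E}^0 \oplus \overline{E}^1)(1) \oplus \overline{E}^1(1) \to \overline{E}^1(2)$ sending $((a_0,a_1), b)$ to $a_1 \otimes x - b \otimes y$. I would construct two bundle maps into this kernel: an embedding $p_0^*\overline{E}^0 \otimes p_1^*\overline{\mathcal{O}(1)} \to \tr_1(\overline{E})$ defined by $a_0 \otimes \xi \mapsto ((a_0, 0)\otimes \xi, 0)$, and an embedding $p_0^*\overline{E}^1 \to \tr_1(\overline{E})$ defined by $c \mapsto ((0,c)\otimes y,\ c\otimes x)$. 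One checks both land in the kernel (using $f^1((a_0,0)) = 0$ and $c\otimes yx - c\otimes xy = 0$), and a rank count then shows their sum yields an isomorphism
$$p_0^*\overline{E}^0 \otimes p_1^*\overline{\mathcal{O}(1)} \oplus p_0^*\overline{E}^1 \xrightarrow{\sim} \tr_1(\overline{E}).$$
The orthogonality of the splitting $\overline{E}^0 \oplus^\bot \overline{E}^1$ forces the two images to be orthogonal in $\tr_1(\overline{E})$, and the Fubini-Study identity $\|x\|^2 + \|y\|^2 = 1$ guarantees that the second embedding is isometric for the pullback metric. Therefore the above isomorphism is an isometry of hermitian vector bundles on $X \times \P^1$.

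Using additivity of the Chern character on orthogonal direct sums gives
$$\ch(\tr_1(\overline{E})) = \ch\bigl(p_0^*\overline{E}^0 \otimes p_1^*\overline{\mathcal{O}(1)}\bigr) + \ch(p_0^*\overline{E}^1).$$
The second summand equals $\sigma_1(\ch(\overline{E}^1))$ and thus lies in $D_1$. For the first, since $\mathcal{O}(1)$ is a line bundle and $\omega^2 = 0$ on $\P^1$ for dimensional reasons, $\ch(\overline{\mathcal{O}(1)}) = 1 + \omega$, so
$$\ch\bigl(p_0^*\overline{E}^0 \otimes p_1^*\overline{\mathcal{O}(1)}\bigr) = p_0^*\ch(\overline{E}^0)\wedge(1+\omega_1) = \sigma_1(\ch(\overline{E}^0)) + \omega_1\wedge\sigma_1(\ch(\overline{E}^0)),$$
whose first term is in $D_1$ and whose second term is in $\mathcal{W}_1$. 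Combining, $\ch(\tr_1(\overline{E}))$ lies in $D_1 + \mathcal{W}_1$, so it vanishes in $\wmD_{\P}^{2p-*}(X,p)$, as required. The only real checkpoint is the isometric identification of $\tr_1(\overline{E})$ in step one, which rests on the Fubini-Study normalization; everything else is formal from additivity and the fact that the tautological form $\omega$ generates the projective-line cohomology killed by $\mathcal{W}_1$.
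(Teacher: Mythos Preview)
Your proof is correct and follows essentially the same approach as the paper's: both identify $\tr_1(\overline{E})$ isometrically with $\overline{E}^0(1)\oplus^{\bot}\overline{E}^1$ and then use $\ch(\overline{\mathcal{O}(1)})=1+\omega$ to place the Chern form in $D_1+\mathcal{W}_1$. The only cosmetic difference is that the paper packages the second embedding via the general short exact sequence $0\to B\xrightarrow{b\mapsto(b\otimes y,\,b\otimes x)} B(1)\oplus B(1)\to B(2)\to 0$, whereas you write out the two embeddings explicitly and verify isometry directly from $\|x\|^2+\|y\|^2=1$.
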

\begin{proof}
Clearly, the exact sequence $\overline{E}$ already has canonical
kernels. Let us compute $\ch(\tr\nolimits_1(\overline{E}))$. By
definition, $\tr_1(\overline{E})$ is the kernel of the morphism
\begin{eqnarray*}
\overline{E}^0(1)\oplus \overline{E}^1(1) \oplus \overline{E}^1(1) & \rightarrow & \overline{E}^1(2) \\
 (a,b,c) &\mapsto & b\otimes x-c\otimes y.
\end{eqnarray*}
For every locally free sheaf $B$, there is a short exact sequence
$$ 0\rightarrow B \xrightarrow{f} B(1)\oplus B(1) \xrightarrow{g} B(2) \rightarrow 0 $$
where $f$ sends $b$ to $(b\otimes y,b\otimes x)$ and $g$ sends
$(b,c)$ to $b\otimes x - c\otimes y$. Moreover, if $\overline{B}$
is a hermitian vector bundle, then the monomorphism $f$ preserves
the hermitian metric. It follows that the hermitian vector bundle
$\tr\nolimits_1(\overline{E})$ is $\overline{E}^0(1)\oplus
\overline{E}^1$ and therefore
$$\ch(\tr\nolimits_1(\overline{E}))=\ch(\overline{E}^0(1)\oplus \overline{E}^1)=\ch(\overline{E}^0(1))
+ \ch(\overline{E}^1).$$ Since
$\ch(\overline{\mathcal{O}(1)})=1+\omega\in D_1^2+\mmW_1^2$, the
differential form $\ch(\overline{E}^0(1)) + \ch(\overline{E}^1)$
is zero in the complex $\bigoplus_{p\geq 0}\wmD_{\P}^{2p-*}(X,p)$.
\end{proof}

\begin{lmm} Let $n>0$ and let $\overline{E}\in \Z \widehat{C}_{n}(X\times (\P^1)^m)$ be a
hermitian $n$-cube which is split in the last direction, that is,
for every $\bj\in \{0,1,2\}^{n-1}$, the $1$-cube
$$(\partial_n^0 \overline{E})^{\bj} \rightarrow (\partial_n^1 \overline{E})^{\bj}  \rightarrow (\partial_n^2 \overline{E})^{\bj} $$
is hermitian split. Then, $$
\ch\nolimits_{n,m}(\overline{E})=0$$ in $\bigoplus_{p\geq 0}
\mmD^{2p-n-m}(X,p)$.
\end{lmm}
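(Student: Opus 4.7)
The plan is to compute $\tr_n(\lambda(\overline{E}))$ explicitly and show that its Chern character, viewed as a form on $X \times (\P^1)^{n+m}$, represents zero in $\wmD^*_\P$ with respect to the last projective-line factor (the one added by the final $\tr_1$); the integral against $W_{n+m}$ then vanishes by Proposition \ref{qinverse}.

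First, since $\overline{E}$ is hermitian split in direction $n$, one has isometries $(\partial_n^1 \overline{E})^{\bj} \cong (\partial_n^0 \overline{E})^{\bj} \oplus^{\perp} (\partial_n^2 \overline{E})^{\bj}$ with the direction-$n$ maps being the canonical inclusion and projection. Inspecting the construction of $\lambda$ in \cite{FeliuAdams}, one sees that $\lambda$ acts trivially on a direction whose maps are already canonical inclusions onto a direct summand; hence $\lambda(\overline{E})$ remains hermitian split in direction $n$ after being given canonical kernels in the remaining directions.

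Next, the iterated transgression $\tr_n = \tr_1 \circ \cdots \circ \tr_1$ applies $\tr_1$ successively to the first direction of the current cube, so the first $n-1$ iterations act on directions $1, \dots, n-1$ of $\overline{E}$. Since $\tr_1$ is defined as a kernel of a map between explicit direct sums tensored with $\mmO(k)$, it commutes with orthogonal direct sums in directions other than the one being transgressed. Applying this inductively, the splitness in direction $n$ is preserved, so that $\tr_{n-1}(\lambda(\overline{E}))$ is a hermitian-split $1$-cube on $X \times (\P^1)^{m+n-1}$ of the form
$$\tr_{n-1}(\partial_n^0 \lambda(\overline{E})) \to \tr_{n-1}(\partial_n^0 \lambda(\overline{E})) \oplus^{\perp} \tr_{n-1}(\partial_n^2 \lambda(\overline{E})) \to \tr_{n-1}(\partial_n^2 \lambda(\overline{E})).$$
Applying the final $\tr_1$ to this split $1$-cube, exactly as in the proof of Lemma \ref{chzero}, gives
$$\tr_n(\lambda(\overline{E})) \cong \tr_{n-1}(\partial_n^0 \lambda(\overline{E})) \otimes p_{n+m}^*\overline{\mmO(1)} \oplus^{\perp} \tr_{n-1}(\partial_n^2 \lambda(\overline{E})),$$
with $p_{n+m}$ the projection onto the last $\P^1$ factor.

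Finally, since $\ch(\overline{\mmO(1)}) = 1 + \omega$, the Chern character $\ch(\tr_n(\lambda(\overline{E})))$ is visibly a sum of a pullback via $\sigma_{n+m}$ and a term of the shape $\omega_{n+m} \wedge \sigma_{n+m}(\cdot)$, i.e.\ it lies in $D_{n+m}^* + \mmW_{n+m}^*$ with respect to the last projective line. By Proposition \ref{qinverse}, $\int_{(\P^1)^{n+m}} \alpha \bullet W_{n+m} = 0$ for any $\alpha$ in this subcomplex, so $\ch\nolimits_{n,m}(\overline{E}) = 0$. The main obstacle is the first step: checking that $\lambda$ respects the hermitian split structure in a single direction, which requires a careful inspection of the definitions in \cite{FeliuAdams}; once this is in hand, the rest is the argument of Lemma \ref{chzero} propagated through an iterated transgression.
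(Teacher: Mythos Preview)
Your proof is correct and follows essentially the same route as the paper: reduce to showing that after $\tr_{n-1}$ the remaining $1$-cube in direction $n$ is hermitian split, then invoke Lemma~\ref{chzero}. You are in fact more careful than the paper, which silently drops the $\lambda$ from $\ch_{n,m}$ in its argument; your observation that $\lambda$ preserves the hermitian split structure in direction $n$ is the honest gap-filling step, and your identification of it as the only point requiring real checking is accurate.
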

\begin{proof} Recall that if $\overline{E}$ is a hermitian $n$-cube
$$\tr\nolimits_n(\overline{E})=\tr\nolimits_1
\tr\nolimits_{n-1}(\overline{E})=\tr\nolimits_1\big(\tr\nolimits_{n-1}(\partial_n^0\overline{E})\rightarrow
\tr\nolimits_{n-1}(\partial_n^1\overline{E}) \rightarrow
\tr\nolimits_{n-1}(\partial_n^2\overline{E})\big).$$

Then, if $\overline{E}$ is split in the last direction, the 1-cube
$$ \tr\nolimits_{n-1}(\partial_n^0\overline{E})\rightarrow
\tr\nolimits_{n-1}(\partial_n^1\overline{E}) \rightarrow
\tr\nolimits_{n-1}(\partial_n^2\overline{E})$$ is orthogonally
split. Then, the result follows from Lemma \ref{chzero}.
\end{proof}

\begin{crl}\label{splitzero}
  Let $n>0$ and let $\overline{E}\in \Z \widehat{C}_{n}(X\times (\P^1)^m)$ be a
hermitian $n$-cube which is split in any direction. Then $\ch\nolimits_{n,m}(\overline{E})=0$ in the group $\bigoplus_{p\geq 0}
\mmD^{2p-n-m}(X,p)$.
  \end{crl}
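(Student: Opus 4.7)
The plan is to reduce to the preceding lemma, which handles the special case of splitting in the last ($n$-th) direction, via a symmetry argument on the cube directions.

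First, I would observe that the symmetric group $\mathfrak{S}_n$ acts on hermitian $n$-cubes by permuting directions: for $\tau \in \mathfrak{S}_n$, define $(\tau\overline{E})^{\bj}=\overline{E}^{\tau(\bj)}$. Correspondingly, $\tau$ acts on $X\times(\P^1)^m\times(\P^1)^n$ by permuting the last $n$ projective lines. Under the $\lambda$-construction and the iterated transgression, a direct unwinding of the recursive definition
$\tr_n(\overline{E})=\tr_1\tr_{n-1}(\overline{E})$ shows that permuting directions of the cube corresponds (up to the obvious coordinate relabeling on $(\P^1)^n$) to permuting the transgression factors: $\tr_n(\lambda(\tau\overline{E}))$ is canonically isometric to the pullback of $\tr_n(\lambda(\overline{E}))$ under the permutation of $(\P^1)^n$-factors induced by $\tau$. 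Since $\ch$ is functorial under isometries, the same statement holds for $\ch(\tr_n(\lambda(\tau\overline{E})))$.

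Next I would examine how $W_{n+m}$ and the integration transform under such a permutation. By the explicit definition
$W_n=\frac{1}{2n!}\sum_{i}(-1)^{i}S_n^{i}$
with $S_n^i$ a sum over $\mathfrak{S}_n$ weighted by signatures, $W_n$ is alternating under permutations of the coordinates $z_1,\ldots,z_n$. The orientation of $(\P^1)^n$ also changes by the signature of the permutation, so the two signs cancel, and the integral
$\int_{(\P^1)^{n+m}}\ch(\tr_n(\lambda(\cdot)))\bullet W_{n+m}$
is invariant under permutation of the last $n$ projective lines. Combining with the previous paragraph, this gives
$\ch_{n,m}(\tau\overline{E})=\ch_{n,m}(\overline{E})$
for every $\tau\in\mathfrak{S}_n$ acting on the directions of the cube.

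With this symmetry at hand, the corollary is immediate. If $\overline{E}$ is split in some direction $i$, let $\tau\in\mathfrak{S}_n$ be the transposition interchanging $i$ and $n$. Then $\tau\overline{E}$ is split in the last direction, so the preceding lemma gives $\ch_{n,m}(\tau\overline{E})=0$, whence $\ch_{n,m}(\overline{E})=0$.

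The only nontrivial point in this argument is verifying that $\lambda$ and $\tr_n$ are indeed $\mathfrak{S}_n$-equivariant up to coordinate permutation; this is the main obstacle, but it is purely formal since $\lambda$ is defined directionwise and $\tr_1$ is a natural construction, so the claim follows by induction on $n$ from the associativity/commutativity properties of taking kernels in different directions, together with the fact that the Fubini--Study metric is the same on each $\P^1$-factor.
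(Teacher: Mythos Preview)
Your symmetry argument is a valid way to deduce the corollary from the preceding lemma, and the paper itself gives no proof (the corollary is marked with a bare $\square$), so you are supplying details the author considers immediate. One small slip: permuting factors of $(\P^1)^n$ \emph{preserves} the complex orientation, since each factor has even real dimension; thus the two signs you invoke do not both appear, and in fact one gets $\ch_{n,m}(\tau\overline{E}) = (-1)^{|\tau|}\ch_{n,m}(\overline{E})$ rather than equality. This is harmless for the conclusion, since you only need the value to be zero.

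A more direct route, closer to how the preceding lemma is actually argued, avoids the full $\mathfrak{S}_n$-equivariance claim. If $\overline{E}$ is split in direction $i$, then by functoriality of $\tr_1$ and the computation in Lemma~\ref{chzero}, when the iterated transgression reaches the $i$-th step it produces a bundle of the form $A(1)_i \oplus^\perp B$ with $A,B$ pulled back along the projection forgetting the $i$-th $\P^1$. The remaining transgressions, being functorial and additive, preserve this shape, so $\ch(\tr_n(\lambda(\overline{E})))$ lies in $\sigma_i(\cdots) + \omega_i\wedge\sigma_i(\cdots)$ and hence vanishes already in $\widetilde{\mmD}_\P^{*}(X,p)$; applying $\varphi$ then gives $\ch_{n,m}(\overline{E})=0$. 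This sidesteps the need to verify that $\lambda$ commutes with permutations of cube directions, which is the one point in your argument that genuinely requires inspecting the construction in \cite{FeliuAdams}.
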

  \vist

\begin{thm}\label{commut} Let $X$ be a proper arithmetic variety over $\Z$.
The diagram
$$  \xymatrix{
N \widehat{C}_*(X) \ar[r]^(0.37){\ch} \ar[d]_{\Psi^{k}}   & \bigoplus_{p\geq 0} \mmD^{2p-*}(X,p) \ar[d]^{\Psi^k} \\
\wZ \widehat{C}_*^{\widetilde{\P}}(X) \ar[r]_(0.37){\ch} & \bigoplus_{p\geq 0}
\mmD^{2p-*}(X,p) }$$
 is commutative.
\end{thm}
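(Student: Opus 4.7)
The strategy is to compute $\ch(\Psi^k(\overline{E}))$ for a hermitian $n$-cube $\overline{E} \in N\widehat{C}_n(X)$ by decomposing $\Psi^k(\overline{E})=\Psi^k\circ T(\overline{E})$ into its components in the bi-complex $\wZ\widehat{C}^{\widetilde{\P}}_*(X)$ and showing that all but one contribute zero to $\ch$. By the construction of $T$ recalled in Remark \ref{trans2}, the only component of $T(\overline{E})$ not consisting of hermitian split cubes is the $(0,n)$-component $\tr_n(\lambda(\overline{E}))$, a hermitian vector bundle on $X \times (\P^1)^n$.

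Next I would apply $\Psi^k$ componentwise and invoke Proposition \ref{adams2}. On components of cube-dimension $r \geq 2$, the image is split in at least one direction, so by Corollary \ref{splitzero} the Bott-Chern form vanishes. On $1$-cube components, one obtains a sum of hermitian split $1$-cubes (again zero by Lemma \ref{chzero}) together with correction terms of the form $\mu(\overline{A})$ with $\overline{A} = \lambda_l(\Psi^l(\overline{F})^*) \otimes \overline{G}$ (or with factors swapped). By Proposition \ref{koszulvanish}, each $\mu^j(\lambda_l(\Psi^l(\overline{F})^*))$ is hermitian split, and tensoring with $\overline{G}$ preserves hermitian splitness, since the tensor product of an orthogonal direct sum decomposition with the identity remains orthogonally split. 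Hence these correction terms also have vanishing Bott-Chern form by Lemma \ref{chzero}.

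Only the $(0,n)$-component survives, contributing $\ch_{0,n}(\Psi^k(\tr_n(\lambda(\overline{E}))))$. Here $\Psi^k(\tr_n(\lambda(\overline{E})))$ is the secondary Euler characteristic class of the Koszul complex of the hermitian vector bundle $\tr_n(\lambda(\overline{E}))$. By Proposition \ref{gs} applied to this bundle on $X\times (\P^1)^n$, one has $\ch(\Psi^k(\tr_n(\lambda(\overline{E})))) = \Psi^k(\ch(\tr_n(\lambda(\overline{E}))))$ as differential forms. Since $\Psi^k$ acts on the Deligne complex by multiplication by $k^p$ on the $p$-twisted summand, and the integration map of Proposition \ref{qinverse} preserves the twist, $\Psi^k$ commutes with $\frac{1}{(2\pi i)^n}\int_{(\P^1)^n}(\cdot)\bullet W_n$. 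Putting these together, $\ch(\Psi^k(T(\overline{E}))) = \Psi^k(\ch_{0,n}(\tr_n(\lambda(\overline{E})))) = \Psi^k(\ch(\overline{E}))$, using the factorization of $\ch$ from Section 1.

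The main obstacle is the vanishing for the correction terms $\mu(\lambda_l(\Psi^l(\overline{F})^*) \otimes \overline{G})$. This is precisely why the modification $\lambda_l$ of Remark \ref{modadams} is needed: without it, the Bott-Chern form of $\mu^j(\Psi^l(\overline{F})^*)$ would pick up the nonzero contribution proportional to $\log(l)$ computed in Section \ref{bottchernkoszul}, and the commutativity of \eqref{adamsarith} would fail. The whole argument rests on the careful matching between the algebraic structure of the Koszul complex and the vanishing of the secondary Bott-Chern forms afforded by the $\lambda_l$-twisted version.
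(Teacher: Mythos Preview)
Your proposal is correct and follows essentially the same approach as the paper: decompose $\Psi^k(T(\overline{E}))$ by cube-dimension, kill the contributions in cube-dimension $\geq 1$ using Corollary \ref{splitzero}, Lemma \ref{chzero}, and Proposition \ref{koszulvanish}, and reduce the surviving $(0,n)$-component to Proposition \ref{gs}. Your write-up is in fact slightly more explicit than the paper's own proof, which indexes the cases by the symbol ``$n$'' without clearly distinguishing the input degree from the cube-dimension of the components; your organization by bidegree $(r,m)$ and your remark that $\Psi^k$ commutes with the integration map $\varphi$ of Proposition \ref{qinverse} (because $\varphi$ preserves the twist $p$) make the argument cleaner.
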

\begin{proof}
Let $\overline{E}$ be a hermitian $n$-cube.
If $n\geq 2$, then by Proposition \ref{adams2}, all the cubes in the image of $\Psi^k$ are hermitian split in at least one direction. Therefore, by
Corollary \ref{splitzero}, they vanish after applying ``$\ch$''. The same reasoning applies to some of the summands of the image of $\Psi^k$ when $n=1$. The rest of the terms are  hermitian $1$-cubes of the form $\mu(\overline{A})$ with $\overline{A}= \lambda_l(\Psi^{l}(\overline{F})^*)\otimes \overline{G}$ or $\overline{A}= \overline{F}\otimes \lambda_l(\Psi^l(\overline{G})^*)$ for some hermitian bundles $\overline{F},\overline{G}$. By Proposition \ref{koszulvanish}, these terms vanish as well after applying ``$\ch$''.

Finally, if $n=0$,  by Proposition \ref{gs} and the definition of $\Psi^k$ on
differential forms, we have
\begin{eqnarray*}
 \ch(\Psi^k(\overline{E}))
 &=& \frac{(-1)^n}{(2\pi i)^n}
\int_{(\P^1)^n}
\ch(\Psi^k(\tr\nolimits_n(\lambda(\overline{E}))))\wedge W_n
\\ & = & \frac{(-1)^n}{(2\pi i)^n} \int_{(\P^1)^n}
\Psi^k(\ch(\tr\nolimits_n(\lambda(\overline{E}))))\wedge W_n \\
&=& \Psi^k(\ch(\overline{E})).
\end{eqnarray*}
\end{proof}

\subsection{Adams operations on higher arithmetic $K$-theory} Let $X$ be a
proper arithmetic variety over $\Z$. Proposition \ref{cubes2} and
 Theorem \ref{commut} enable us to define, for every $k\geq 0$, the
\emph{Adams operation on higher arithmetic $K$-groups}:
\begin{enumerate*}[$\blacktriangleright$]
\item Since the simple complex associated to a morphism is a functorial construction, for every $k$ there is an Adams operation morphism on the
Deligne-Soul{\'e} higher arithmetic $K$-groups:
$$\Psi^k: \widehat{K}_n(X)_{\Q} \rightarrow  \widehat{K}_n(X)_{\Q},\quad n\geq 0.$$
\item By Proposition \ref{arithlong} for every $k$ there is an
Adams operation morphism on the Takeda higher arithmetic
$K$-groups:
$$\Psi^k: \widehat{K}^{T}_n(X)_{\Q} \rightarrow  \widehat{K}_n^{T}(X)_{\Q},\quad n\geq 0.$$
\end{enumerate*}

We have proved the following theorems.

\begin{thm}[(Adams operations)] Let $X$ be a proper arithmetic variety over $\Z$ and let
$\widehat{K}_n(X)$ be the $n$-th Deligne-Soul{\'e} arithmetic
$K$-group. There are Adams
operations
$$ \Psi^k: \widehat{K}_n(X)_{\Q}  \rightarrow \widehat{K}_n(X)_{\Q}, $$
compatible with the Adams operations in $K_n(X)_{\Q}$ and
$\bigoplus_{p\geq 0}H^{2p-n}_{\mmD}(X,\R(p))$, by means of the
morphisms $a$ and $\zeta$.
\end{thm}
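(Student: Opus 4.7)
The plan is to leverage everything already established: the homological description of $\widehat{K}_n(X)_{\Q}$ from Proposition~\ref{Delignesouleprops}(ii), the parallel description for the factorization through $\wZ\widehat{C}_*^{\widetilde{\P}}(X)$ from Proposition~\ref{cubes2}, and the strict commutativity of diagram \eqref{adamsarith} from Theorem~\ref{commut}. The construction of $\Psi^k$ on $\widehat{K}_n(X)_{\Q}$ should simply be the morphism induced on homology by applying the simple-complex functor to \eqref{adamsarith}, suitably truncated.

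More concretely, first I would apply $\sigma_{>0}$ on the target of both horizontal arrows of \eqref{adamsarith} to obtain a commutative square
\begin{equation*}
\xymatrix{
N \widehat{C}_*(X) \ar[r]^(0.37){\hch} \ar[d]_{\Psi^{k}}   & \bigoplus_{p\geq 0} \sigma_{>0}\mmD^{2p-*}(X,p) \ar[d]^{\Psi^k} \\
\wZ \widehat{C}_*^{\widetilde{\P}}(X) \ar[r]_(0.37){\hch} & \bigoplus_{p\geq 0} \sigma_{>0}\mmD^{2p-*}(X,p),
}
\end{equation*}
where the right vertical arrow is well defined because the degree-preserving morphism $\alpha\mapsto k^p\alpha$ respects the b\^ete truncation. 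This square induces a chain morphism between simple complexes $s(\hch_N)\to s(\hch_{\widetilde{\P}})$, and hence a morphism on homology. Composing with the isomorphisms of Proposition~\ref{cubes2} and Proposition~\ref{Delignesouleprops}(ii), namely
\begin{equation*}
\widehat{K}_n(X)_{\Q}\xrightarrow{\cong} H_n(s(\hch_N),\Q)\to H_n(s(\hch_{\widetilde{\P}}),\Q)\xleftarrow{\cong}\widehat{K}_n(X)_{\Q},
\end{equation*}
gives the desired map $\Psi^k$. The modification of $\widehat{\varphi}_2$ via $\lambda_k$ (Remark~\ref{modadams}) is precisely what makes the square commute on the nose, thanks to Proposition~\ref{koszulvanish} and the vanishing observed in Corollary~\ref{splitzero} for cubes split in some direction; without it one would need an explicit homotopy and the argument would become considerably more delicate.

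Next, for the compatibility with the Adams operations on $K_n(X)_{\Q}$ and on $\bigoplus_{p\geq 0}H_{\mmD}^{2p-n}(X,\R(p))$, I would invoke the naturality of the long exact sequence associated to the simple complex of a chain morphism. Applied to the commutative square above, this produces a ladder
\begin{equation*}
\xymatrix@C=12pt{
 \ar[r]& H_{\mmD}^{2p-n-1}(X,\R(p))\ar[r]^(0.55){a}\ar[d]^{\Psi^k} & \widehat{K}_n(X)_{\Q}\ar[r]^{\zeta}\ar[d]^{\Psi^k} & K_n(X)_{\Q}\ar[r]\ar[d]^{\Psi^k} & \\
 \ar[r]& H_{\mmD}^{2p-n-1}(X,\R(p))\ar[r]_(0.55){a} & \widehat{K}_n(X)_{\Q}\ar[r]_{\zeta} & K_n(X)_{\Q}\ar[r] &
}
\end{equation*}
whose outer vertical arrows are the Adams operations on algebraic $K$-theory (via the representative of \cite{FeliuAdams}, rationally) and on real Deligne--Beilinson cohomology (the canonical grading-by-$p$ structure). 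Commutativity of this ladder is exactly the compatibility statement of the theorem.

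The step I expect to require the most care is verifying strict commutativity of the square on $N\widehat{C}_*(X)$, because the image of $\Psi^k$ is a mixture of hermitian split cubes, cubes of the form $\mu(\overline{A})$ coming from Koszul-type complexes, and pieces in $\Z\widehat{C}^{\P}_{r,n}(X)$ with $n>0$. However this has already been handled in Theorem~\ref{commut} using Corollary~\ref{splitzero} (which disposes of the split summands) together with Proposition~\ref{koszulvanish} (which disposes of the Koszul-type summands through the $\lambda_k$ twist). Given these, the construction outlined above is essentially formal, and the only remaining check is that the two isomorphisms of Proposition~\ref{cubes2} compose to the identity, which follows from the fact that both are induced by the quasi-isomorphism of Proposition~\ref{dold} and the factorizations of ``$\ch$'' through the relevant complexes.
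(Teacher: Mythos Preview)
Your proposal is correct and follows exactly the approach the paper takes: the paper states this theorem as an immediate consequence of the functoriality of the simple complex applied to the truncated version of diagram~\eqref{adamsarith}, combined with the identifications of Propositions~\ref{Delignesouleprops}(ii) and~\ref{cubes2}, and gives no further argument beyond the sentence ``Since the simple complex associated to a morphism is a functorial construction, for every $k$ there is an Adams operation morphism on the Deligne-Soul{\'e} higher arithmetic $K$-groups.'' Your write-up simply makes explicit the steps the paper leaves implicit, including the ladder of long exact sequences for the compatibility with $a$ and $\zeta$.
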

\vist

\begin{thm}[(Adams operations)]\label{adamsariththeo1}
Let $X$ be a proper arithmetic variety over $\Z$ and let
$\widehat{K}^{T}_n(X)$ be the $n$-th arithmetic $K$-group defined
by Takeda in \cite{Takeda}. Then, for every $k\geq 0$ there exists
an Adams operation morphism
$\Psi^k:\widehat{K}_{n}^{T}(X)_{\Q}\rightarrow
\widehat{K}_{n}^{T}(X)_{\Q}$ such that the following diagram is
commutative:  {\small
$$ \xymatrix{ K_{n+1}(X)_{\Q} \ar[r]^(0.4){\ch} \ar[d]_{\Psi^k} &
\bigoplus_{p\geq 0} \widetilde{\mmD}^{2p-n-1}(X,p)
 \ar[r]^(0.65){a} \ar[d]_{\Psi^k} &
\widehat{K}_{n}^{T}(X)_{\Q} \ar[r]^{\zeta} \ar[d]_{\Psi^k} &  K_{n}(X)_{\Q} \ar[d]_{\Psi^k} \ar[r] & 0    \\
K_{n+1}(X)_{\Q} \ar[r]^(0.40){\ch}   & \bigoplus_{p\geq 0}
\widetilde{\mmD}^{2p-n-1}(X,p) \ar[r]^(0.65){a} &
\widehat{K}^{T}_n(X)_{\Q} \ar[r]^{\zeta} & K_n(X)_{\Q}  \ar[r] &
0. }
$$}  Moreover, the diagram
$$
\xymatrix{ \widehat{K}^{T}_n(X)_{\Q} \ar[r]^(0.4){\ch}
\ar[d]_{\Psi^k}& \bigoplus_{p\geq 0}  \mmD^{2p-n}(X,p)
\ar[d]^{\Psi^k}
\\ \widehat{K}^{T}_n(X)_{\Q} \ar[r]_(0.4){\ch} & \bigoplus_{p\geq 0} \mmD^{2p-n}(X,p) } $$
is commutative
\end{thm}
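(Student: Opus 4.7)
The plan is to construct $\Psi^k$ on $\widehat{K}_n^T(X)_{\Q}$ by transporting it through the two homological descriptions of Takeda's $K$-theory established earlier in the paper, and then deduce both compatibility diagrams by naturality. Concretely, by Corollary \ref{normisohermi} together with Proposition \ref{cubes2}, there are isomorphisms
$$\widehat{K}_n^T(X)_{\Q}\cong \widehat{H}_n(N\widehat{C}_*(X),\ch)_{\Q}\cong \widehat{H}_n(\wZ\widehat{C}_*^{\widetilde{\P}}(X),\ch)_{\Q}.$$
The key input is Theorem \ref{commut}, which supplies a strictly commutative square of chain complexes whose vertical arrows are the representative of $\Psi^k$ on the source and the canonical $\Psi^k=k^p\cdot(\ )$ on the Deligne complex. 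Proposition \ref{arithlong}(ii) then yields an induced map on modified homology, and composing with the two isomorphisms above defines $\Psi^k$ on $\widehat{K}_n^T(X)_{\Q}$.

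Next I would verify the exact-sequence diagram. The exact sequence in the statement is precisely the one coming from Proposition \ref{arithlong}(i)(b) applied to the chain morphism $\ch:\wZ\widehat{C}_*^{\widetilde{\P}}(X)\to\bigoplus_{p\geq 0}\mmD^{2p-*}(X,p)$, under the identifications of Proposition \ref{cubes2}. Since $\Psi^k$ is induced from a commutative square of chain complexes, the long exact sequence of Proposition \ref{arithlong}(i)(b) is functorial, so $\Psi^k$ automatically commutes with $a$ and $\zeta$. That the left and right vertical $\Psi^k$ are the usual Adams operations on $K_*(X)_{\Q}$ (respectively, multiplication by $k^p$ on Deligne cohomology) uses the corresponding statement in Proposition of Section 4.1 for algebraic $K$-theory, plus the definition of $\Psi^k$ on $\bigoplus_{p\geq 0}\wmD^{2p-*}(X,p)$.

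For the second diagram, I use Proposition \ref{arithlong}(i)(a): the characteristic class $\ch:\widehat{H}_n(\wZ\widehat{C}_*^{\widetilde{\P}}(X),\ch)_{\Q}\to Z(\bigoplus_{p\geq 0}\mmD^{2p-n}(X,p))$ sends $[(a,b)]$ to $\ch(a)-d_{\mmD}(b)$, and by construction agrees with Takeda's $\ch$ under the identification of Proposition \ref{cubes2} (this is the content of the second commutative square in Theorem \ref{arithrational}). Commutativity with $\Psi^k$ then follows directly from Theorem \ref{commut}: applying $\Psi^k$ to the representative $(a,b)$ and then $\ch$ gives $\ch(\Psi^k(a))-d_{\mmD}(\Psi^k(b))=\Psi^k(\ch(a))-\Psi^k(d_{\mmD}(b))=\Psi^k(\ch(a)-d_{\mmD}(b))$, where the first equality uses the chain-level commutativity supplied by Theorem \ref{commut} and the fact that $\Psi^k=k^p\cdot(\ )$ on differential forms commutes with $d_{\mmD}$.

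The main obstacle has already been overcome upstream: it lies in Theorem \ref{commut} (for which the modification $\lambda_k$ in the definition of $\widehat{\varphi}_2$ and the vanishing of the Bott-Chern form of $\lambda_k(\Psi^k(\overline{E})^*)$ established in Proposition \ref{koszulvanish} are essential). Once that strict chain-level commutativity is in hand, the present theorem is a formal consequence of the naturality of modified homology groups under morphisms of squares (Proposition \ref{arithlong}(ii)-(iii)) combined with the two isomorphism results Corollary \ref{normisohermi} and Proposition \ref{cubes2}, so no further calculations are required beyond tracing representatives through the diagram.
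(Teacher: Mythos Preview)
Your proposal is correct and follows essentially the same approach as the paper. The paper's own argument consists of precisely the ingredients you cite---Proposition \ref{cubes2}, Theorem \ref{commut}, and Proposition \ref{arithlong}---and treats the theorem as an immediate consequence; your write-up simply unpacks the naturality statements (Proposition \ref{arithlong}(i)(a),(b) and (ii)) in more detail than the paper does.
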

\vist

\medskip
\textbf{Lambda operations.}
Let $X$ be a proper arithmetic variety over $\Z$. Consider the
product structure on $\widehat{K}_{*}(X)_{\Q}$ defined before
 Lemma \ref{productarithktheory}. Then, by the relation between the Adams and $\lambda$ operations in a $\lambda$-ring (which is a $\Q$-algebra), there are induced $\lambda$-operations
$$\lambda^k: \widehat{K}_n(X)_{\Q}  \rightarrow \widehat{K}_n(X)_{\Q}.$$

\begin{crl}[(Pre-$\lambda$-ring)]\label{adamsariththeo2} Let $X$ be a proper arithmetic variety over $\Z$.
Then, $\widehat{K}_{*}(X)_{\Q}$ is a pre-$\lambda$-ring. Moreover,
there is a commutative square $$ \xymatrix{
\widehat{K}_{n}(X)_{\Q} \ar[r]^{\zeta} \ar[d]_{\lambda^k} &  K_{n}(X)_{\Q} \ar[d]^{\lambda^k} \\
\widehat{K}_n(X)_{\Q} \ar[r]^{\zeta} & K_n(X)_{\Q} }
$$
\end{crl}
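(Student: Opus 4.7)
The plan is to define the $\lambda^k$ on $\widehat{K}_*(X)_{\Q}$ as universal $\Q$-polynomials in the Adams operations $\Psi^k$ supplied by the two preceding theorems. Recall from the discussion around Lemma \ref{productarithktheory} that $\widehat{K}_*(X)_{\Q}$ is the graded commutative $\Q$-algebra with Gillet--Soul\'e product on $\widehat{K}_0(X)_{\Q}$ and with $\bigoplus_{n\geq 1}\widehat{K}_n(X)_{\Q}$ as a square-zero ideal. Over a $\Q$-algebra the Newton recursion is invertible, so I would set $\lambda^0=1$, $\lambda^1=\Psi^1=\mathrm{id}$, and for $k\geq 2$
\[
k\,\lambda^k \;=\; (-1)^{k-1}\,\Psi^k \;+\; \sum_{i=1}^{k-1}(-1)^{i-1}\,\lambda^{k-i}\,\Psi^i,
\]
where the division by $k$ is legitimate after tensoring with $\Q$.

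Next I would verify the pre-$\lambda$-ring axioms. The equalities $\lambda^0=1$ and $\lambda^1=\mathrm{id}$ are built into the construction, so only additivity $\lambda_t(x+y)=\lambda_t(x)\lambda_t(y)$ for $\lambda_t(x):=\sum_{k\geq 0}\lambda^k(x)t^k$ requires argument. The recursion above is equivalent to the generating-function identity
\[
\frac{\lambda_t(x)'}{\lambda_t(x)} \;=\; \sum_{k\geq 1}(-1)^{k-1}\,\Psi^k(x)\,t^{k-1};
\]
since each $\Psi^k$ is an additive endomorphism of the abelian group $\widehat{K}_*(X)_{\Q}$ by the preceding theorem, the right-hand side is additive in $x$, hence so is $\log \lambda_t(x)$, which delivers the required multiplicativity of $\lambda_t$. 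Crucially, one does not need $\Psi^k$ to be a ring morphism: the \emph{pre-}$\lambda$-ring axioms impose no compatibility of $\lambda^k$ with the product on $\widehat{K}_*(X)_{\Q}$, so the square-zero ideal convention in positive degrees presents no obstacle.

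For the commutative square, $\zeta:\widehat{K}_n(X)_{\Q}\to K_n(X)_{\Q}$ is a ring homomorphism by the very construction of the product on $\widehat{K}_*(X)_{\Q}$ (which was imposed to be compatible with the Loday product on $K_*(X)_{\Q}$), and the preceding Adams-operations theorem supplies $\zeta\circ\Psi^k=\Psi^k\circ\zeta$ for all $k$. Since the same Newton inversion applied inside $K_*(X)_{\Q}$ produces the Gillet--Soul\'e $\lambda$-operations there, induction on $k$ using the recursion above yields $\zeta\circ\lambda^k=\lambda^k\circ\zeta$. The main subtlety I would expect to have to check carefully is consistency in degree zero with the Gillet--Soul\'e--R\"ossler $\lambda$-ring structure on $\widehat{K}_0(X)_{\Q}$: this amounts to comparing Adams operations in degree zero, which holds because $\Psi^k$ on a hermitian vector bundle is, by Proposition \ref{adams2}(i), the secondary Euler characteristic of the Koszul complex, i.e.\ precisely the formula (cf.\ Proposition \ref{psicomp}) underlying the classical Adams operations in the R\"ossler $\lambda$-ring structure.
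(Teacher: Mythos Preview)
Your proposal is correct and follows essentially the same approach as the paper: define $\lambda^k$ from $\Psi^k$ by inverting the Newton recursion over $\Q$, and deduce the commutativity of the square from the already-established compatibility $\zeta\circ\Psi^k=\Psi^k\circ\zeta$ together with the fact that $\zeta$ respects the product (with the square-zero convention in positive degrees on both sides). The paper's proof is extremely terse---it records only the key observation that the Adams--lambda relation is taken with respect to the same product structure on $K_*(X)_{\Q}$ and $\widehat{K}_*(X)_{\Q}$---whereas you spell out the generating-function verification of $\lambda_t(x+y)=\lambda_t(x)\lambda_t(y)$ and the degree-zero comparison with Gillet--Soul\'e--R\"ossler; these are exactly the points the paper leaves implicit.
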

\begin{proof}  The diagram is commutative since the Adams and
lambda operations in $K_*(X)$ are related under the product
structure on $K_*(X)$ which is zero in $\bigoplus_{n\geq
1}K_{n}(X)$.
\end{proof}

\begin{prp} Let $X$ be a proper arithmetic variety over $\Z$.
The Adams operations given here for $\widehat{K}_0(X)_{\Q}$ agree
with the ones given by Gillet and Soul{\'e} in
\cite{GilletSouleClassesII}.
\end{prp}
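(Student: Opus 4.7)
The plan is to check the agreement on the two natural families of generators of $\widehat{K}_0(X)_{\Q}$, namely the classes $[(0,\alpha)]$ for $\alpha \in \bigoplus_{p\geq 0}\wmD^{2p-1}(X,p)$ and the classes $[(\overline{E},0)]$ for a hermitian vector bundle $\overline{E}$. Since the exact sequence of Proposition \ref{Delignesouleprops}(iii) shows that every class of $\widehat{K}_0(X)_{\Q}$ is a sum of such generators, this suffices.

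For classes in the image of $a$, both Adams operations coincide by construction: they both act by $k^p$ on the weight-$p$ summand $\wmD^{2p-1}(X,p)$, and the identification $\widehat{K}_0(X)\cong \widehat{K}_0^T(X)$ is compatible with the morphisms $a$. Concretely, the paper's $\Psi^k$ is built from the diagram \eqref{adamsarith}, whose right-hand side is exactly the canonical $\Psi^k$ on $\bigoplus_p \mmD^{2p-*}(X,p)$, and this is also the weight-multiplication used in \cite{GilletSouleClassesII}.

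For a generator $[(\overline{E},0)]$, I would use the identification $\widehat{K}_0(X)_{\Q}\cong \widehat{H}_0(\wZ\widehat{C}_*(X),\ch)_{\Q}$ coming from Proposition \ref{cubes2} to translate the paper's Adams operation into the class of $\Psi^k(\overline{E})\in Z\widehat{C}_0(X)$. By Proposition \ref{adams2}(i), $\Psi^k(\overline{E})$ is exactly the secondary Euler characteristic class of the Koszul complex $\Psi^k(\overline{E})^*$. Crucially, as noted in Remark \ref{modadams}, the $\lambda_k$-twist has no effect at $n=0$, so the formula reduces to Grayson's Koszul-based secondary Euler characteristic, which is precisely how Gillet and Soul{\'e} define their Adams operation on $[(\overline{E},0)]$ in \cite{GilletSouleClassesII} (they express $\widehat{\psi}^k$ via their pre-$\lambda$-ring structure and the Newton polynomial, which rewrites as the Koszul secondary Euler characteristic in $\widehat{K}_0(X)$, using the identity of Proposition \ref{psicomp}).

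The main obstacle is to check that the lift from $K_0$ to $\widehat{K}_0$ does not introduce any discrepancy between the two formulas. The polynomial identity $\psi^k-\Psi^k=\sum_i d(s_i)$ of Proposition \ref{psicomp} splits the correcting short exact sequences $s_i$ into two types: the canonical permutation isomorphisms $\bigwedge^{k-p}\overline{E}\otimes S^p\overline{E}\cong S^p\overline{E}\otimes \bigwedge^{k-p}\overline{E}$, which are isometries and thus contribute zero Bott-Chern form, and tensor products of shifted Koszul complexes $\mu^p(\Psi^{k_i}(\overline{E})^{t*})\otimes \overline{A}_i$. For the latter, the Bott-Chern contribution from each Koszul subcomplex is controlled by the computation of $\ch(\Psi^k(\overline{E})^*)$ carried out in $\S$\ref{bottchernkoszul}, and the resulting secondary forms must be shown to sum to zero in $\bigoplus_p\wmD^{2p-1}(X,p)$. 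This reduces to a combinatorial identity on the $\log(k_i)$ coefficients that can be verified either directly or via the splitting principle and reduction to line bundles, after which the equality in $\widehat{K}_0(X)_{\Q}$ follows.
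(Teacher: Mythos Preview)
The paper's own proof is a single sentence: ``It follows from the definition.'' In other words, the author regards the agreement as tautological once the identifications of Proposition~\ref{Delignesouleprops}(i) and Proposition~\ref{cubes2} are in place, together with the fact (Remark~\ref{modadams}) that at $n=0$ the chain morphism $\Psi^k$ reduces to the ordinary secondary Euler characteristic of the Koszul complex, while on forms it is multiplication by $k^p$. Your proposal, by contrast, unpacks everything in considerably more detail and then isolates a ``main obstacle'': you want to show that the identity $\psi^k(\overline{E})-\Psi^k(\overline{E})=\sum_i d(s_i)$ of Proposition~\ref{psicomp} lifts from $K_0(X)$ to $\widehat{K}_0(X)$, which amounts to checking that $\sum_i \ch(\overline s_i)=0$ in $\bigoplus_p\wmD^{2p-1}(X,p)$.

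This last step is not carried out in your proposal, and it is not obvious. Corollary~\ref{chkoszul} only gives $d_{\mmD}\ch(\mu^j(\Psi^k(\overline E)^*))=0$, i.e.\ each Bott--Chern form is \emph{closed}, not zero; and Proposition~4.6 shows that $\ch(\Psi^k(\overline E)^*)$ is a genuine nonzero multiple of $\Psi^k(\ch(\overline E))$ for $k\ge 2$. So the ``combinatorial identity on the $\log(k_i)$ coefficients'' you invoke would have to produce exact cancellation among nonzero terms, and you give no mechanism for this. Already for $k=3$ one has $\psi^3(\overline E)-\Psi^3(\overline E)=-\overline E\otimes d(\Psi^2(\overline E)^*)$ (up to a permutation isometry), whose Bott--Chern contribution is $\tfrac{\log 2}{2}\,\ch(\overline E)\bullet\Psi^2(\ch(\overline E))$, which is not zero in $\wmD^{*}(X,p)$.

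So either the paper's one-line proof is relying on a more direct comparison of the two definitions than the route you take (in which case your long detour through Proposition~\ref{psicomp} is unnecessary and you should instead trace exactly what Gillet--Soul\'e's $\lambda$-operations do on a generator $(\overline E,0)$ and compare termwise), or the paper is itself glossing over the very obstacle you have identified. In either case, your argument as written has a gap at the final step: the vanishing of $\sum_i\ch(\overline s_i)$ is asserted but not established, and the tools you cite from \S\ref{bottchernkoszul} do not by themselves yield it.
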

\begin{proof}
It follows from the definition.
\end{proof}

Consider  the product structure in $\bigoplus_{n\geq
0}\widehat{K}_n^{T}(X)_{\Q}$ having $\bigoplus_{n\geq
1}\widehat{K}_n^{T}(X)_{\Q}$ as a zero square ideal and agrees
with the product defined by Takeda in \cite{Takeda} otherwise.

\begin{crl}[(Pre-$\lambda$-ring)]\label{adamsariththeo3} Let $X$ be a proper arithmetic variety over $\Z$.
Then, $\widehat{K}_{*}^{T}(X)_{\Q}$ is a pre-$\lambda$-ring.
Moreover, there is a commutative square $$ \xymatrix{
\widehat{K}^{T}_{n}(X)_{\Q} \ar[r]^{\zeta} \ar[d]_{\lambda^k} &  K_{n}(X)_{\Q} \ar[d]^{\lambda^k} \\
\widehat{K}^{T}_n(X)_{\Q} \ar[r]^{\zeta} & K_n(X)_{\Q} }
$$
\end{crl}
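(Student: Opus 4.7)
The plan is to deduce the corollary directly from Theorem \ref{adamsariththeo1}, exactly parallel to the proof of Corollary \ref{adamsariththeo2} but now applied to the Takeda version of the theory. First I would recall that, since $\widehat{K}^T_*(X)_\Q$ is a graded $\Q$-algebra with $\bigoplus_{n\geq 1}\widehat{K}^T_n(X)_\Q$ a square-zero ideal, the Adams operations $\Psi^k$ of Theorem \ref{adamsariththeo1} are ring morphisms: in positive degrees the product is zero on both source and target, so multiplicativity of $\Psi^k$ reduces either to multiplicativity on $\widehat{K}^T_0(X)_\Q$ (which is the classical arithmetic $K_0$ case, already a $\lambda$-ring by Gillet--Soul\'e and R\"ossler) or to the mixed degree $0$ times positive degree case, which by Lemma \ref{productarithktheory} and the naturality of the construction of $\Psi^k$ reduces in turn to the algebraic statement for the Loday product and to compatibility on differential forms, both of which are already settled.

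Next I would invoke the standard fact that in a commutative $\Q$-algebra $R$ endowed with ring morphisms $\Psi^k$ satisfying $\Psi^1=\mathrm{id}$ and $\Psi^k\Psi^l=\Psi^{kl}$, one can define $\lambda$-operations $\lambda^k$ by solving the Newton recurrence
\[
\Psi^k = \lambda^1\Psi^{k-1}-\lambda^2\Psi^{k-2}+\cdots+(-1)^{k-1}k\lambda^k,
\]
and that the resulting $\lambda^k$ automatically satisfy $\lambda^0=1$, $\lambda^1=\mathrm{id}$ and the addition formula $\lambda^k(x+y)=\sum_{i+j=k}\lambda^i(x)\lambda^j(y)$; this is precisely the definition of a pre-$\lambda$-ring and it is how the product structure on $\widehat{K}^T_*(X)_\Q$ was set up in the first place (compare the discussion preceding Lemma \ref{productarithktheory}). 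Applying this to $R=\widehat{K}^T_*(X)_\Q$ with the Adams operations already constructed yields the pre-$\lambda$-ring structure claimed.

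For the commutative square, I would observe that $\zeta:\widehat{K}^T_n(X)_\Q\to K_n(X)_\Q$ is a ring morphism for the product structures chosen on both sides (both kill positive $\times$ positive degrees and agree with the Loday product elsewhere, on which $\zeta$ is known to be multiplicative by Takeda's construction). Since Theorem \ref{adamsariththeo1} already gives $\zeta\circ\Psi^k=\Psi^k\circ\zeta$ for every $k$, and since $\lambda^k$ is expressed by the same universal polynomial in $\Psi^1,\dots,\Psi^k$ on both sides, the identity $\zeta\circ\lambda^k=\lambda^k\circ\zeta$ follows formally.

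The only mildly delicate point is checking that the product on $\widehat{K}^T_*(X)_\Q$ really is associative and commutative in the graded sense, which is needed to make the inversion of the Newton formula legitimate; but associativity is automatic for a square-zero extension of a commutative ring by a module, and graded commutativity up to $2$-torsion (property (v) in the list after the definition of $\widehat{K}^T_n(X)$) becomes strict commutativity after $\otimes\Q$, so no obstruction arises there.
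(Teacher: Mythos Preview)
Your overall strategy is the same as the paper's (which literally says ``analogous to the proof of Corollary~\ref{adamsariththeo2}''): define $\lambda^k$ from the $\Psi^k$ via the Newton recursion over $\Q$, and deduce commutativity with $\zeta$ from Theorem~\ref{adamsariththeo1} together with the fact that $\zeta$ respects the product. So the approach is right.

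However, there is a genuine gap in how you package the ``standard fact''. You invoke it under the hypotheses that the $\Psi^k$ are \emph{ring} morphisms and that $\Psi^k\Psi^l=\Psi^{kl}$. Neither of these is established in the paper for $\widehat{K}^T_*(X)_\Q$; in fact the closing Remark explicitly lists $\Psi^k(E\otimes F)=\Psi^k(E)\otimes\Psi^k(F)$ and $\Psi^k(\Psi^l(E))=\Psi^{kl}(E)$ as the open problems one would need to upgrade from a pre-$\lambda$-ring to a $\lambda$-ring. Your first paragraph tries to argue the ring-morphism property in the mixed degree case ``by naturality'', but this is exactly the kind of identity the paper does \emph{not} claim to control at the level of cubes, and your appeal to $\Psi^k\Psi^l=\Psi^{kl}$ in the second paragraph has no justification at all.

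The fix is that you do not need these hypotheses. For a \emph{pre}-$\lambda$-ring you only need the addition formula $\lambda^k(x+y)=\sum_{i+j=k}\lambda^i(x)\lambda^j(y)$, and over $\Q$ this follows from mere \emph{additivity} of the $\Psi^k$ via
\[
\lambda_t(x)=\exp\Bigl(\sum_{k\ge 1}(-1)^{k-1}\tfrac{\Psi^k(x)}{k}\,t^k\Bigr),
\]
since additivity of $\Psi^k$ makes the exponent additive and hence $\lambda_t$ multiplicative. Additivity of $\Psi^k$ on $\widehat{K}^T_n(X)_\Q$ is automatic because $\Psi^k$ is induced by a chain morphism on the modified homology groups. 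With this correction your last two paragraphs go through unchanged: $\zeta$ is a ring map for the square-zero product, it intertwines the $\Psi^k$ by Theorem~\ref{adamsariththeo1}, and the Newton polynomial expressing $\lambda^k$ is the same on both sides, so $\zeta\circ\lambda^k=\lambda^k\circ\zeta$.
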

\begin{proof} The proof is analogous to the proof of Corollary \ref{adamsariththeo2}.
\end{proof}

\begin{rem}
One way to prove that the pre-$\lambda$-ring structure on
$\widehat{K}_*(X)_{\Q}$ given here is actually a $\lambda$-ring structure, it
is necessary to find precise exact sequences relating, at the level of vector
bundles, the equalities in $K_0(X)$
\begin{eqnarray*}
\Psi^k(E\otimes F ) &=& \Psi^k(E)\otimes \Psi^k(F), \\
\Psi^k(\Psi^l(E)) &=& \Psi^{kl}(E).
\end{eqnarray*}
This implies finding formulas for $$\bigwedge\nolimits^k(E\otimes
F),\qquad
\bigwedge\nolimits^k\left(\bigwedge\nolimits^l(E)\right),
$$ in terms of tensor and exterior products. The theory of Schur functors, gives a formula
for the first term. However, the second formula is an open problem.
Nevertheless, even for the first equality, when we try to apply the formulas to
our concrete situation, the combinatorics become really complicated.

It would desirable and interesting to find a non-direct approach in order to prove these relations. One attempt could be
to go through the ``Arakelov'' representation ring on the linear group scheme over $\Z$, $\widehat{R}_{\Z}(GL_n\times GL_m)$ introduced by R\"ossler in \cite{Roessler1}.
\end{rem}

%

\providecommand{\bysame}{\leavevmode\hbox to3em{\hrulefill}\thinspace}
\providecommand{\MR}{\relax\ifhmode\unskip\space\fi MR }
\providecommand{\MRhref}[2]{%
  \href{http://www.ams.org/mathscinet-getitem?mr=#1}{#2}
}
\providecommand{\href}[2]{#2}

\end{document}